\documentclass[reqno]{amsart}

\usepackage[utf8]{inputenc}
\usepackage[T1]{fontenc}
\usepackage[american]{babel,isodate}
\cleanlookdateon
\usepackage{lmodern}
\usepackage{xcolor}
\usepackage{pdfpages}
\usepackage{enumerate}
\usepackage{listings}
\usepackage{amsmath,amssymb,xcolor}
\definecolor{lightgrey}{rgb}{0.9,0.9,0.9}
\usepackage{physics}
\usepackage{ stmaryrd }
\usepackage{tikz}
\usepackage{tikz-cd}
\usetikzlibrary{arrows}
\usetikzlibrary{calc}
\usepackage{cancel}
\usetikzlibrary{babel}
\usepackage{lscape}
\usepackage{url}

\usepackage{hyperref}
\hypersetup{linktocpage,colorlinks=true,citecolor=purple,linkcolor=blue,urlcolor=blue,filecolor=black}

\usepackage{float}
\usepackage{graphicx, import}
\usepackage{mathtools}
\usepackage{dsfont}
\usepackage{pinlabel}
\usepackage{yhmath}

\usepackage{xargs}                      
\usepackage{xcolor} 
\definecolor{OliveGreen}{rgb}{0,0.6,0}
 
\usepackage[colorinlistoftodos, prependcaption,textsize=tiny]{todonotes}
\newcommandx{\unsure}[2][1=]{\todo[linecolor=red,backgroundcolor=red!25,bordercolor=red,#1]{#2}}
\newcommandx{\change}[2][1=]{\todo[linecolor=blue,backgroundcolor=blue!25,bordercolor=blue,#1]{#2}}
\newcommandx{\info}[2][1=]{\todo[linecolor=OliveGreen,backgroundcolor=OliveGreen!25,bordercolor=OliveGreen,#1]{#2}}

\newcommand{\centre}[1]{\begin{array}{c} #1 \end{array}}

\usepackage{stackengine}

\usepackage{amsmath,amsthm,amssymb, amsfonts, amscd}
\usepackage{bbm}
\usepackage{rotating,subcaption}
\usepackage{url}
\usepackage{graphicx,bm}
\usepackage{mathtools}
\usepackage[mathscr]{euscript}
\allowdisplaybreaks
\usepackage{lipsum}
\usepackage{bm}
\usepackage[capitalize]{cleveref} 
\usepackage{mathrsfs}

\usepackage{xpatch}
\makeatletter
\AtBeginDocument{\xpatchcmd{\@thm}{\thm@headpunct{.}}{\thm@headpunct{}}{}{}}
\makeatother

\newtheorem{theorem}{Theorem}[section]
\newtheorem{proposition}[theorem]{Proposition}

\newtheorem{lemma}[theorem]{Lemma}
\newtheorem{corollary}[theorem]{Corollary}
\newtheorem{conjecture}[theorem]{Conjecture}

\theoremstyle{definition}
\newtheorem{definition}[theorem]{Definition}
\newtheorem{example}[theorem]{Example}

\theoremstyle{remark}

\newtheorem{remark}[theorem]{Remark}
\newtheorem{notation}[theorem]{Notation}

\numberwithin{equation}{section}

\newcommand{\bk}{\Bbbk}

\usepackage{accents}
\newcommand{\uhat}{\underaccent{\check}}

\makeatletter
\newcommand{\cupr@tip}{\text{\raisebox{-0.1ex}{$\m@th\hat{}$}}}
\newcommand{\cupr}{\mathbin{\cup\cupr@}}

\newcommand{\cupr@}{%
  \mathchoice
  {\mkern-1.35mu\cupr@tip}
  {\mkern-1.35mu\cupr@tip}
  {\mkern-1.55mu\cupr@tip}
  {\mkern-1.875mu\cupr@tip}
}
\makeatother

\makeatletter
\newcommand{\capr@tip}{\text{\raisebox{0.47ex}{$\m@th\uhat{}$}}}
\newcommand{\capr}{\mathbin{\capr@\cap}}

\newcommand{\capr@}{%
  \mathchoice
  {\mkern11.6mu\capr@tip\mkern-11.6mu}
  {\mkern11.4mu\capr@tip\mkern-11.4mu}
  {\mkern11.1mu\capr@tip\mkern-11.1mu}
  {\mkern10.2mu\capr@tip\mkern-10.2mu}
}
\makeatother

\makeatletter
\newcommand{\capl@tip}{\text{\raisebox{0.47ex}{$\m@th\uhat{}$}}}
\newcommand{\capl}{\mathbin{\capl@\cap}}

\newcommand{\capl@}{%
  \mathchoice
  {\mkern2.1mu\capl@tip\mkern-2.1mu}
  {\mkern2.1mu\capl@tip\mkern-2.1mu}
  {\mkern2.3mu\capl@tip\mkern-2.3mu}
  {\mkern2.1mu\capl@tip\mkern-2.1mu}
}
\makeatother

\makeatletter
\newcommand{\cupl@tip}{\text{\raisebox{-0.1ex}{$\m@th\hat{}$}}}
\newcommand{\cupl}{\mathbin{\cupl@\cup}}

\newcommand{\cupl@}{%
  \mathchoice
  {\mkern1.35mu\cupl@tip\mkern-1.35mu}
  {\mkern1.35mu\cupl@tip\mkern-1.35mu}
  {\mkern1.55mu\cupl@tip\mkern-1.55mu}
  {\mkern1.875mu\cupl@tip\mkern-1.875mu}
}
\makeatother

\DeclareFontFamily{U}{mathx}{}
\DeclareFontShape{U}{mathx}{m}{n}{ <-> mathx10 }{}
\DeclareSymbolFont{mathx}{U}{mathx}{m}{n}
\DeclareFontSubstitution{U}{mathx}{m}{n}
\DeclareMathAccent{\widecheck}{0}{mathx}{"71}

\begin{document}


\title{Perturbed Alexander Invariants via Quantum Clusters Algebras}

\date{\today}

\author{Boudewijn Bosch}
\address{Bernouilli Institute, University of Groningen, Nijenborgh 9, 9747 AG, Groningen, The Netherlands}
\email{\href{mailto:b.j.bosch@rug.nl}{b.j.bosch@rug.nl}}




\begin{abstract}

We derive a perturbative expansion of knot invariants using quantum cluster algebras. In a recent series of papers, it has been shown that the universal $R$-matrix of $U_q(\mathfrak{g})$, with $\mathfrak{g}$ a Lie algebra of type ADE, can be interpreted as a cluster transformation. For $\mathfrak{g} = \mathfrak{sl}_2$, we show that this interpretation yields a perturbed $R$-matrix. We achieve this by expressing certain representations of quantum cluster algebras in terms of generators of the Weyl--Heisenberg algebra. By inserting an auxiliary parameter $\epsilon$, we derive a perturbed $R$-matrix. With it, we can compute a knot invariant whose zeroth-order term in $\epsilon$ equals $ \Delta_K (T)^{-1}$, the reciprocal of the Alexander polynomial, while higher-order terms in $\epsilon$ give rise to so-called perturbed Alexander invariants. These invariants also appear in perturbative series known by several names in the literature: the Melvin--Morton--Rozansky (MMR) expansion, the large-color expansion, the rational expansion, or the loop expansion. We implement our approach formally in \textit{Mathematica} and compute explicit examples. Moreover, we argue that the methods developed in this paper should extend beyond the case $\mathfrak g=\mathfrak{sl}_2$.

\smallskip
\noindent \textit{Keywords.} cluster algebras, universal invariant, ribbon Hopf algebra
\end{abstract}

\subjclass{57K14, 16T05, 17B37}


\maketitle

\setcounter{tocdepth}{2}
\tableofcontents


\section{Introduction}

\noindent
Cluster algebras are commutative rings endowed with an additional combinatorial structure: a set of generators (cluster variables) grouped into overlapping subsets (called \textit{clusters}). For any cluster $\mathbf{x}$ and any element $x \in \mathbf{x}$, there is another cluster obtained from $\mathbf{x}$ by transforming $x$ into an element $x'$. This transformation is called a \emph{mutation}. The data specifying a cluster algebra is called a \emph{cluster seed}, consisting of a set of cluster variables together with an associated exchange matrix (or, equivalently, a quiver). The exchange matrix defines mutation rules. Cluster algebras arise in areas such as Lie theory, Teichmüller theory, Poisson geometry, and quantum groups; see, e.g., \cite{fomin2002cluster,berenstein2003cluster}. They have also been used in knot theory in  \cite{NagaiTerashima2020}, where cluster algebras have been connected to the Alexander polynomial.

In~\cite{BerensteinZelevinsky2005}, the notion of a \textit{quantum cluster algebra} was introduced as a noncommutative deformation of a cluster algebra, in which the cluster variables generate a quantum torus. A \textit{quantum cluster seed} consists of quantum cluster variables together with an exchange matrix which defines their commutation relations. Mutations are defined as automorphisms of the quantum torus.

Quantum groups, by contrast, were constructed much earlier. In \cite{Drinfeld1985, Jimbo1985}, Drinfeld and Jimbo defined quantum groups as deformations of universal enveloping algebras of semisimple Lie algebras. They are endowed with the structure of a Hopf algebra and carry a universal $R$-matrix satisfying the Yang--Baxter equation. Quantum groups have proven an enormously valuable tool in knot theory after the development of quantum invariants of knots and links, originally introduced by Reshetikhin and Turaev \cite{ReshetikhinTuraev1990}. However, because they arise within an algebraic framework, the resulting invariants often lack a geometric interpretation.

Recently, Schrader and Shapiro \cite{Schrader2019} realized the quantum group $U_q(\mathfrak{sl}_n)$  within the framework of quantum cluster algebras by considering the quantum cluster structure associated to moduli space of $\mathrm{PGL}_n$-local systems on marked surfaces introduced in \cite{FockGoncharov2006}. This recovers, in particular, Faddeev’s picture of $U_q(\mathfrak{sl}_2)$ described in \cite{faddeev1999modulardoublequantumgroup}. The braid group action and, hence, the $R$-matrix admit a description via cluster transformations (mutations) associated to (half) Dehn twists on the two-punctured disk.

Goncharov and Shen extended the construction of Schrader and Shapiro in \cite{goncharov2019quantum}. In this work, they showed that the quantum group $U_q(\mathfrak g)$ for $\mathfrak{g}$ of type ADE likewise admits an interpretation within the theory of cluster algebras. By considering a surface $S$ and a semisimple adjoint algebraic group split $G$, a moduli space $\mathscr{P}_{G,S}$ is defined, from which quantum groups can be recovered. Especially noteworthy is that characteristic properties of quantum groups, such as the Hopf-algebra structure, Chevalley generators, Weyl group, and the $R$-matrix, arise from the geometry of $\mathscr{P}_{G,S}$.

Quantum groups typically lead to knot invariants by considering $n$-dimensional representations, leading to the \textit{colored Jones polynomials} $J_{\mathcal{K}}^{n}$. This is, however, not the only way quantum groups give rise to knot invariants. In \cite{bar2021perturbed}, an auxiliary parameter $\epsilon$ is inserted into the algebra relations of a Drinfeld double, leading to a perturbative expansion of the associated universal invariant of knots. This perturbative series was subsequently identified in \cite{bosch2025largecolorexpansionderiveduniversal} with a well-known expansion that appears under several names in the literature: the \emph{Melvin--Morton--Rozansky (MMR) expansion} \cite{MelvinMorton1995,BarNatanGaroufalidis1996,rozansky1998universalr}, the \emph{large-color expansion} \cite{park2020large}, the \emph{rational expansion} \cite{rozansky1998universalr}, or the \emph{loop expansion} \cite{Kricker2000}. More concretely, there exist symmetric Laurent polynomials
  \[
    P^{\mathcal{K}}_i \in \mathbb{Z}[t,t^{-1}], \qquad i \geq 0
    \]
    such that the colored Jones polynomial admits an expansion
    \[
      J_{\mathcal{K}}^{n}(q)=\sum_{i=0}^{\infty} \frac{P^{\mathcal{K}}_{i}\left(q^{n}\right)}{\Delta_{\mathcal{K}}^{2 i+1}\left(q^{n}\right)}(q-1)^{i} \in \mathbb{Q} \llbracket q-1 \rrbracket,
      \]
      where $P_{\mathcal{K}}^0(t) = 1$.

In \cite{bar2021perturbed}, the main results are proved using a Hopf-algebraic interpretation of Seifert surfaces. In subsequent work \cite{barnatan2024perturbedalexanderinvariant}, the first-order perturbed Alexander invariant is defined using a quite different method, where a Gaussian interpretation of the Alexander polynomial had been exploited. This Gaussian picture (arising from the universal invariant) has been made explicit in \cite{bosch2025tensorsgaussiansalexanderpolynomial} in an algebraic context. Concretely, for a given knot, one considers the matrix $A$, corresponding to a specifically chosen presentation matrix of the Alexander module, whose determinant is the Alexander polynomial. The perturbed Alexander invariant is obtained from a quadratic expression in the entries of $A^{-1}$. The similarity between the formulas for the perturbed Alexander invariant and the classical Alexander polynomial suggests an underlying topological explanation, which is still missing.

In this paper, we argue that the cluster-algebraic interpretation of quantum groups provides a natural starting point for considering perturbative expansions of knot invariants.  Because a class of cluster algebras has geometric origins in gluing equations for a hyperbolic ideal tetrahedron to a triangulation of a punctured disk \cite{Fomin2008,hikami2014braiding}, this approach suggests a route toward a topological interpretation of perturbed Alexander invariants. We show that the Alexander polynomial can be recovered at the classical level (Theorem \ref{thm:clusterburau}), and perturbed Alexander invariants arise after quantization of cluster algebras (Theorem \ref{thm:pertrmatrix}). 

This connection follows from the representation theory of quantum cluster algebras. Quantum cluster algebras admit representations that can be regarded as a sophisticated analog of the Schr\"odinger representation. To allow for perturbative expansions, we insert an auxiliary parameter $\epsilon$. This allows for an $\epsilon$-expansion of the $R$-matrix, giving rise to a “perturbed $R$-matrix”
\[
  R = R_0(1 + \epsilon f(T,x,y)) + O(\epsilon^2).
\]
Notably, the insertion of $\epsilon$ is chosen in such a way that the zeroth-order term of the $R$-matrix gives rise to the Alexander polynomial when applied to knots. Informally, we state the following.

\begin{theorem}
For a distinguished choice of cluster variables of an amalgamated cluster algebra of type $A_1$, the mutation sequence associated to a triangulation of the complement of a knot $\mathcal{K}$ recovers the Alexander polynomial of $\mathcal{K}$. After quantization and $\epsilon$-expansion, it yields perturbed Alexander invariants.
  \end{theorem}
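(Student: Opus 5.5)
This informal theorem really packages two separate claims: the classical one, made precise later as Theorem~\ref{thm:clusterburau}, and the quantum one, Theorem~\ref{thm:pertrmatrix}. I would prove them in that order.

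\textbf{Classical part.} I would start from the Schrader--Shapiro/Goncharov--Shen picture for $\mathfrak g=\mathfrak{sl}_2$. There the braid generator $\sigma_i$ acting on the $n$-punctured disk is realized as a composition of mutations of an amalgamated seed. Each puncture contributes a copy of the type $A_1$ quiver, glued along the frozen boundary variables.

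The distinguished choice of cluster variables is a monomial change of coordinates. It is chosen so that the puncture (Casimir) variables are all specialized to a common value $T$, and so that the remaining variables serve as linear coordinates near a fixed point of the mutation. In these coordinates I would linearize the cluster transformation attached to $\sigma_i$ at the fixed point. Concretely, I would take the tropical or logarithmic derivative of the mutation formulas $x_k'=x_k^{-1}\prod x_j^{[b_{jk}]_+}(1+x_k)$. The aim is to show that the Jacobian equals the (reduced) Burau matrix of $\sigma_i$ evaluated at $t=T$.

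Once this holds for each generator, functoriality of composing mutations shows that the braid $\beta$ acts by its Burau matrix. The Alexander polynomial is then $\det(I-\mathrm{Burau}(\beta))$, up to the standard normalization $(1+T+\dots+T^{n-1})^{-1}$ for the closure. Equivalently, I would arrange the computation as a Gaussian integral or generating-function computation whose value is $\Delta_K(T)^{-1}$, matching the zeroth order of the universal invariant.

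\textbf{Quantum part.} I would use the representation of the quantum torus in which the quantum cluster variables become exponentials of Weyl--Heisenberg generators, $X_k=e^{\epsilon(\dots)}$, together with the auxiliary rescaling $\hbar\mapsto \epsilon\hbar$. Quantum mutation is conjugation by the quantum dilogarithm $\Psi_q(X_k)$. Expanding $\Psi_q$ in $\epsilon$ with the standard asymptotic formula gives an operator $R=R_0\bigl(1+\epsilon f(T,x,y)\bigr)+O(\epsilon^2)$.

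At order zero, $R_0$ is a Gaussian (quadratic exponential) operator whose quadratic form is exactly the linearization from the classical part. Contracting these Gaussians along a long knot diagram therefore yields $\Delta_K(T)^{-1}$. The higher orders are polynomial insertions into the Gaussian, and they are contracted by Wick/Feynman rules using $A^{-1}$. This reproduces the form of the perturbed Alexander invariants of \cite{barnatan2024perturbedalexanderinvariant,bosch2025tensorsgaussiansalexanderpolynomial}.

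Invariance follows from the fact that the cluster $R$-matrix already satisfies the Reidemeister relations, since it comes from a braid group action realized by mutations. Each coefficient of $\epsilon$ is then an invariant, once the framing and rotation-number corrections are inserted.

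\textbf{Main obstacle.} The hardest step is identifying the $\epsilon$-expanded cluster $R$-matrix with the perturbed $R$-matrix of \cite{bar2021perturbed}, and so with the MMR coefficients. I would try to find an explicit algebra isomorphism, or at least a gauge equivalence, between the Weyl--Heisenberg realization and the $\epsilon$-deformed Drinfeld double, matching $R$-matrices order by order up to first order. The fallback is to check it computationally in \textit{Mathematica} on examples.
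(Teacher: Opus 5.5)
Your split into a classical half and a quantum half matches the paper. The classical half, however, has a genuine gap. The step that should produce the Burau matrix is only announced (``the aim is to show that the Jacobian equals the Burau matrix''), and linearizing at a fixed point is not what makes it true. You never fix the distinguished specialization or the base point. Specializing only the puncture Casimirs leaves a genuinely nonlinear map. The Jacobian of $\stackrel{i}{\mathrm{R}}$ in the full cluster chart is then a $(3n+1)\times(3n+1)$ matrix that mixes in directions transverse to the slice where Burau lives. At a generic fixed point, such as Hikami's geometric solutions, it carries Neumann--Zagier-type data \cite{Mizuno2020Jacobian}, not Burau.

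The missing idea is the slice $\Gamma_n$, which is the $\epsilon=0$ limit of the perturbative representation. Each interior variable is set \emph{individually} to $-T^2$ or $-1$, the edge variables to $-T^{-1}\mathbf{x}_k^{-1}\mathbf{x}_{k+1}$, and the ends to $iT^{-1}\mathbf{x}_1$ and $i\mathbf{x}_n^{-1}$. On this slice the nonlinearity vanishes identically: $1+X_2+X_6+X_2X_6=(1+X_2)(1+X_6)=0$ because $X_6=-1$. So the interior variables return to $-T^2,-1$, and $X_1\mapsto iT^{-1}\bigl((1-T^2)\mathbf{x}_1+T\mathbf{x}_2\bigr)$. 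Hence $\mathcal{C}(\sigma_i)\circ\Gamma_n=\Gamma_n\circ\tilde\rho_n(\sigma_i)$ holds exactly (Proposition~\ref{pro:CGamma}). Composition along $\beta$ is automatic because the slice is invariant, and injectivity of $\Gamma_n$ turns $\boldsymbol{y}[1]=\boldsymbol{y}[m+1]$ into $(\tilde\rho_n(\beta)-\mathbb{I}_n)\mathbf{x}=0$. No linearization is needed, and without this slice there is no reason your Jacobian should be Burau.

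The quantum half has two further unsupported steps. First, ``expanding $\Psi_q$ gives $R_0(1+\epsilon f)$'' hides a cancellation you never establish. Each dilogarithm starts at order $\epsilon^{-1}$ with $\mathrm{Li}_2$ of non-constant arguments such as $T^{-1}\mathbf{x}_1^{-1}\mathbf{x}_2$, so these singular, non-Gaussian terms would all have to collapse into a central scalar. The paper never expands dilogarithms. It obtains $(\eta\otimes\eta)(\mathcal{R})=\alpha\,(R_0+O(\epsilon))$, with $\alpha=e^{ht}e^{ht^2/(2\epsilon)}$, directly from the quantum-group formula via $\eta=\psi\circ\widetilde{\iota}$ (Theorem~\ref{thm:rmatrixburau}). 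It uses the cluster side only through $P\circ\mathrm{Ad}_R$, which is the quantum mutation sequence by Theorem~\ref{thm:Rmatrixcluster}. Evaluating this on $\mathbf{X}^{\mathcal{Z}}_1$ and $\mathbf{X}^{\mathcal{Z}}_7$ to first order and applying Proposition~\ref{pro:perturbedexptill2} gives $\partial_{p_1}f$ and $\partial_{p_2}f$, which are then integrated (Theorem~\ref{thm:pertrmatrix}). This $\eta$, the Schrader--Shapiro embedding composed with the perturbative representation, is exactly the algebra map you flag as the main obstacle.

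Second, invariance does not follow from the braid-group action. Mutations only realize $\mathrm{Ad}_R$, which determines $R$ up to a scalar, so you get Yang--Baxter only up to a scalar. They say nothing about the balancing element $\kappa$ or the XC relations involving it. The paper obtains the full XC structure by pushing $(U_h(\mathfrak{sl}_2),\mathcal{R},q^H)$ forward along $\eta'$, with $\kappa=\eta'(q^H)$ (Proposition~\ref{pro:perturbedxc}). This also ties the resulting invariant to the universal $U_h(\mathfrak{sl}_2)$ invariant, rather than requiring an order-by-order comparison with \cite{bar2021perturbed}.
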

Moreover, we aim to provide a first step toward a universal strategy for deriving perturbed knot invariants. Since quantum groups $U_q(\mathfrak g)$ with $\mathfrak g$ of type ADE have an interpretation in the theory of cluster algebras, the methods developed in this paper extend beyond the case $\mathfrak g=\mathfrak{sl}_2$. The main challenge then resides in choosing an appropriate cluster seed and a suitable basis for the associated representation. We give an example of this approach for the quantum cluster algebra of type $A_2$ that is related to $U_q(\mathfrak{sl}_3)$.

An advantage of approaching perturbative series of knot invariants via cluster algebras is its geometric interpretation. We discuss this in more detail in Section~\ref{sec:discussionthird}.

\section{Organization of the paper}
\noindent
In Section~\ref{sec:schrrep} we recall the Heisenberg group, its algebra, and the Schrödinger representation. In Section~\ref{sec:heisenbergalg} we review some identities and notational conventions related to the Weyl--Heisenberg algebra. We also discuss a normal-ordering formalism. Section~\ref{sec:clusteralgebras} provides background on cluster algebras and explains how braid group actions, and in particular the $R$-matrix, arise from cluster mutations. Section~\ref{sec:quantization} discusses the quantization of cluster algebras and its representation theory. In Section~\ref{sec:perturbed} we derive the $\epsilon$-expanded $R$-matrix up to the first order in $\epsilon$. Section~\ref{sec:towsl3} discusses the first steps toward deriving a perturbed $R$-matrix in the $\mathfrak{sl}_3$ case. In Section~\ref{sec:discussionthird}, we interpret the geometric meaning of our approach. Lastly, Appendix~\ref{sec:mathematica} provides a \textit{Mathematica} implementation that derives an expansion up to second order in $\epsilon$. This also includes a sanity check confirming that the $\epsilon$-expanded $R$-matrix satisfies its required conditions.

\subsection*{Acknowledgments} I would like to thank Roland van der Veen for his many helpful discussions and suggestions regarding the content of this paper, as well as Ivan Ip for directing me to the notion of the Schrödinger representation.

\section{The Schr\"odinger Representation}
\label{sec:schrrep}
\noindent
In this section, we review the Heisenberg group and its associated algebra, followed by the Schrödinger representation, based on \cite{lion1980weil}.

\subsection{Symplectic Vector Spaces}
Let $(V,\omega)$ be a $2n$-dimensional symplectic vector space over $\mathbb{R}$. We choose a basis
\[
(p_1,p_2,\dots,p_n,x_1,x_2,\dots,x_n)
\]
 with relations
\begin{align*}
  &\omega(p_i,p_j) = 0, \quad \omega(x_i,x_j) = 0, \\
  &\omega(p_i,x_j) = \delta_{ij}, \quad \omega(x_i,p_j) = - \delta_{ij}.
\end{align*}
This basis is called a \textit{symplectic basis} of $(V,\omega)$.

To $(V,\omega)$ we associate the Heisenberg Lie algebra denoted by $\mathfrak{h}_n$. It is a Lie algebra over $\mathbb{R}$, regarded as $V \oplus \left\langle z \right\rangle$ with relations
\begin{align*}
  &[x,y] = \omega(x,y) z, \quad \text{if } x,y \in V \\
  &[V,z] = 0, \text{ i.e. $\mathbb{R}z$ is the center of } \mathfrak{h}_{n}.
\end{align*}
Let $ L $ be a subspace of $V$. We define its orthogonal complement with respect to $ \omega $ as $ L^\perp $, given by
\[
L^\perp := \{ x \in V \mid \omega(x, y) = 0 \text{ for all } y \in L \}.
\]

A subspace $ \ell $ of $ V $ is called \textit{Lagrangian} if it satisfies $ \ell = \ell^\perp $. This means that $ \omega(x, y) = 0 $ for all $ x, y \in \ell $, implying that $ \ell $ is totally isotropic with respect to the bilinear form $ \omega $. Furthermore, if $ x $ satisfies $ \omega(x, \ell) = 0 $, then $ x \in \ell $. This implies that $ \ell $ is a maximal totally isotropic subspace of $ (V, \omega) $. Given a Lagrangian subspace of $(V,\omega)$, there exists a Lagrangian subspace $\ell'$ such that $\ell \oplus \ell' = V$. This choice of Lagrangian subspaces will be referred to as a \textit{polarization}. In addition, the \textit{symplectic group} is defined as

\[
\mathrm{Sp}(V, \omega) = \{ g \in \text{GL}(V) \mid \omega(g x, g y) = \omega(x, y) \text{ for all } x, y \in V \}.
\]

\subsection{The Heisenberg Group and the Schr\"odinger Representation}
\label{sec:heisenberggroup}

The Heisenberg group $ H_n $ is defined as the Lie group that arises from exponentiating the Heisenberg algebra $ \mathfrak{h}_n $, i.e.,
\[
H_n = \exp(\mathfrak{h}_n).
\]
The multiplication law can be deduced via the BCH formula:
\begin{equation}
\label{eq:multiplication}
  \exp(v + t z)  \cdot \exp(v' + t' z) = \exp(v + v' + (t + t' + \omega(v,v')/2) z)
\end{equation}
  with $v,v' \in V$, $t,t' \in \mathbb{R}$. The Euclidean measure $ \mathrm{d}v $ on the vector space $ V $ induces a Haar measure on the Heisenberg group $ H_n $, which is given by $ \mathrm{d}h = \mathrm{d}v  \mathrm{d}t $.

The group $ G = \mathrm{Sp}(V, \omega) $ acts on the Heisenberg group $ H_n $ via the action
\[
g \cdot (\exp(v + t z)) = \exp(gv + t z).
\]
Note that $ G $ preserves the center of $ H_n $.

Additionally, the group $L = \exp( \ell + \mathbb{R}z)$ is an abelian subgroup of $H_n$. Consider the function $f \colon H_n \to U(1)$ defined by $\exp(v + t z) \mapsto e^{2 \pi i t}$. It is clear that $f(h_1 h_2) = f(h_1) f(h_2)$ for $h_1,h_2 \in L$. Thus, $f|_L$ is a linear character of $L$. The Euclidean measure $\mathrm{d} v'$ on $\ell'$ gives rise to a positive measure $\mathrm{d}\overline{h}$ on the space $H_n/L$.

For a choice of Lagrangian subspace $ \ell $, we define the \textit{Schrödinger representation} $ W(\ell)$ of the Heisenberg group $ H_n $ as
\[
W(\ell) = \mathrm{Ind}_L^{H_n} (f|_{L}).
\]
More concretely, the representation acts on a Hilbert space $ \mathcal{H}(\ell) $, which is the completion of the space of continuous functions $ \vartheta $ on $ H_n $ satisfying the following conditions:
\begin{enumerate}[(i)]
\item\label{item:4}
 Covariance under $ L $:
   \[
   \vartheta(ab) = f(b)^{-1} \vartheta(a), \quad \text{for all } a \in H_n,  b \in L.
   \]
\item Square integration:
   The function $ n \mapsto |\vartheta(n)| $ is square-integrable on the quotient $ H_n/L $ with respect to the invariant measure $ \mathrm{d} \overline{h} $, that is
   \[
   \int_{H_n/L} | \vartheta(\overline{h})|^2 \mathrm{d} \overline{h} < \infty.
   \]
\end{enumerate}
The group $ H_n $ acts on this Hilbert space via left translations, given by
\[
(a \cdot \vartheta)(h) = \vartheta(a^{-1} h).
\]
By definition, any $\vartheta \in \mathcal{H}(\ell)$ is completely determined by its restriction to $\exp(\ell')$. Hence, there exists an isomorphism $\mathcal{H}(\ell) \cong L^2(\ell')$.

Let us now consider the action of $H_n$ on $\mathcal{H}(\ell)$ more explicitly. Using the multiplication law as in \eqref{eq:multiplication}, we note that
\begin{align}
  \label{eq:schrodinger}
 & \exp(x)\cdot \vartheta(\exp(y)) = e^{2 \pi i \omega(x,y)}\vartheta(\exp(y)) &&\qquad x \in \ell, y \in \ell', \\
  \nonumber &\exp(y_0) \cdot \vartheta(\exp(y)) = \vartheta(\exp(y-y_0)) &&\qquad y,y_0 \in \ell', \\
  \nonumber &\exp(t z) = e^{2 \pi i t} \mathrm{Id}.
\end{align}
The action described above is a realization of the so-called Schrödinger representation of the Heisenberg group. A fundamental result, due to Stone and Von Neumann, declares that this representation is the only irreducible unitary representation of the Heisenberg group satisfying the central character condition on $\exp(tz)$. We now state this result formally.
\begin{theorem}[Stone--Von Neumann theorem]
  The following two statements hold:
  \begin{enumerate}[\normalfont (i)]
    \item\label{item:3} $W(\ell)$ is an irreducible representation of $H_{n}$.
          \item Every unitary representation $T$ of $H_n$ on a Hilbert space $\mathcal{H}$ such that $$T(\exp(t z)) = e^{2 \pi i t} \mathrm{Id}_{\mathcal{H}}$$ is unitarily equivalent to a direct sum of copies of $W(\ell)$.
  \end{enumerate}
\end{theorem}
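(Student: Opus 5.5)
We prove both parts by working in the concrete model $\mathcal{H}(\ell)\cong L^2(\ell')$ and using the explicit formulas \eqref{eq:schrodinger}. Fix the symplectic basis so that $\ell=\mathrm{span}(p_1,\dots,p_n)$ and $\ell'=\mathrm{span}(x_1,\dots,x_n)$. Then $\exp(x)$ for $x\in\ell$ acts by multiplication by the character $e^{2\pi i\omega(x,\cdot)}$, $\exp(y_0)$ for $y_0\in\ell'$ acts by translation, and the center acts by the scalar $e^{2\pi i t}$. Unitarity is immediate: multiplication by a unimodular function and translation both preserve the Lebesgue $L^2$ norm.

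\emph{Part (i), irreducibility.} Let $P$ be the orthogonal projection onto a closed invariant subspace. Then $P$ commutes with every $\exp(x)$, $x\in\ell$, that is, with multiplication by every character $e^{2\pi i\omega(x,\cdot)}$ on $\ell'$. The norm-closed span of these multiplication operators contains multiplication by every Fourier transform of an $L^1$ function, so its weak closure is the maximal abelian algebra $L^\infty(\ell')$. Hence $P$ commutes with $L^\infty(\ell')$, and since this algebra is its own commutant, $P=M_{\chi_E}$ for some measurable set $E\subseteq\ell'$. The operator $P$ also commutes with all translations, so $E$ is translation invariant up to null sets. Therefore $E$ is null or conull, and $P\in\{0,\mathrm{Id}\}$.

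\emph{Part (ii), uniqueness.} Given $T$, restrict it to the abelian groups $\exp(\ell)\cong\mathbb{R}^n$ and $\exp(\ell')\cong\mathbb{R}^n$. By \eqref{eq:multiplication} together with the central character, these give a pair $U(x)=T(\exp x)$, $V(y)=T(\exp y)$ with the Weyl commutation relation
\[
U(x)V(y)=e^{2\pi i\omega(x,y)}V(y)U(x).
\]
By Stone's theorem (SNAG), $U$ is determined by a projection-valued measure $E$ on the dual $\widehat{\ell}\cong\ell'$, identified via $\omega$. The Weyl relation then says $V(y)E(B)V(y)^{-1}=E(B+y)$, so $(V,E)$ is a system of imprimitivity for $\ell'$ acting on itself by translation. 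Mackey's imprimitivity theorem for a transitive free action with trivial stabilizer gives a unitary equivalence $\mathcal{H}\cong L^2(\ell')\otimes\mathcal{K}$, under which $E$ becomes multiplication by characteristic functions and $V$ becomes translation tensored with the identity. Plugging this back in, $U$ acts by the characters and the center by $e^{2\pi i t}$, which is exactly $W(\ell)\otimes\mathrm{Id}_{\mathcal{K}}$. Choosing an orthonormal basis of $\mathcal{K}$ exhibits $T$ as a direct sum of copies of $W(\ell)$.

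The main obstacle is the imprimitivity step. To keep the argument self-contained, I would first try von Neumann's original route instead. Define
\[
Q=\int_V e^{-\pi|v|^2/2}\,T(\exp v)\,\mathrm{d}v,
\]
and show, by a Gaussian computation using \eqref{eq:multiplication}, that $Q$ is a nonzero projection satisfying $QT(h)Q=c(h)Q$ for an explicit scalar function $c$. For each unit vector $\xi\in\operatorname{ran}Q$, the cyclic subspace generated by $\xi$ then has matrix coefficients $\langle T(h)\xi,\xi\rangle$ independent of $\xi$. By uniqueness of GNS, every such cyclic subspace is equivalent to the one obtained from $W(\ell)$ itself, where $\operatorname{ran}Q$ is one-dimensional, spanned by the Gaussian. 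Orthogonality of the cyclic subspaces for orthogonal $\xi$ follows from the same relation $QT(h)Q=c(h)Q$. Finally, the decomposition exhausts $\mathcal{H}$: any vector orthogonal to all these cyclic subspaces yields a smaller invariant subspace on which $Q=0$, and $Q=0$ contradicts the fact that the Fourier-type map $v\mapsto T(\exp v)$ is injective on $L^1(V)$.
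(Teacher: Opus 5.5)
The paper does not prove this statement: it quotes the Stone--von Neumann theorem as a classical result (from \cite{lion1980weil}) and uses it only to introduce the Weil representation, so there is no internal argument to compare with. Your proposal supplies a standard proof and is essentially correct, except for one false intermediate claim in part (i). Distinct characters are at operator-norm distance $2$, so the map $x\mapsto M_{e^{2\pi i\omega(x,\cdot)}}$ is not norm-continuous. The norm-closed span of these multiplication operators consists only of multiplications by Bohr almost periodic functions. It therefore contains no $M_{\hat g}$ with $0\neq g\in L^1(\ell)$, since $\hat g\in C_0(\ell')$. What is true is that $M_{\hat g}=\int_\ell g(x)\,M_{e^{2\pi i\omega(x,\cdot)}}\,\mathrm{d}x$ as a weak-operator integral, and $P$ commutes with such integrals. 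Density of these $\hat g$ in $C_0(\ell')$ and weak$^*$-density of $C_0(\ell')$ in $L^\infty(\ell')$ then show that $P$ commutes with $L^\infty(\ell')$, hence lies in it. So your conclusion stands once ``norm-closed'' is replaced by ``weak-operator closed''. The step from ``$\chi_E(\cdot-y_0)=\chi_E$ a.e.\ for each $y_0$'' to ``$E$ is null or conull'' also needs the usual short argument, because the null sets depend on $y_0$. For instance, $\chi_E*\phi$ is constant for every $\phi\in C_c(\ell')$, and letting $\phi$ run through an approximate identity shows that $\chi_E$ is a.e.\ constant.

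For part (ii), the Mackey route is logically fine as a citation. However, for the regular action of $\ell'$ on itself the imprimitivity theorem is a reformulation of the statement you are proving, so it is really a reduction. Your von Neumann argument is the self-contained proof, and your normalization fits the paper's group law \eqref{eq:multiplication}. Take $|\cdot|$ to be the Euclidean norm in which the symplectic basis is orthonormal. A Gaussian computation then gives $Q^*=Q$, $Q^2=Q$ and $QT(\exp w)Q=e^{-\pi|w|^2/2}Q$, i.e.\ $c(\exp(w+tz))=e^{2\pi i t}e^{-\pi|w|^2/2}$. You do not need $\operatorname{ran}Q$ to be one-dimensional in $W(\ell)$: any unit vector there is cyclic by part (i). The nonvanishing of $Q$ on a nonzero invariant subspace is best phrased concretely. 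Since $T(\exp w)^{-1}T(\exp v)T(\exp w)=e^{2\pi i\omega(v,w)}T(\exp v)$, $Q=0$ would force the Fourier transform of the continuous $L^1$ function $v\mapsto e^{-\pi|v|^2/2}\langle T(\exp v)\xi,\eta\rangle$ to vanish. Hence the function itself vanishes, and evaluating at $v=0$ gives $\langle\xi,\eta\rangle=0$. Both routes tacitly use strong continuity of $T$, for SNAG and for the integral defining $Q$; this is part of the usual meaning of ``unitary representation''.
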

Let us consider the irreducible unitary representation $(W, \mathcal{H}(\ell))$ of $H_n$ corresponding to a fixed polarization $\ell$, as constructed above. Given any $g \in \mathrm{Sp}(2n, \mathbb{R})$, we can define a new representation $(W^g, \mathcal{H})$ by
\[
W^g(h) := W(g \cdot h), \quad \text{for all } h \in H_n.
\]
By the Stone--von Neumann Theorem, $W^g$ is unitarily equivalent to $W$. Hence, there exists a unitary operator $R(g)$ on $\mathcal{H}$ such that
\[
R(g) W(h) R(g)^{-1} = W(g \cdot h), \quad \text{for all } h \in H_n.
\]

The map $g \mapsto R(g)$ thus yields a projective representation of the symplectic group $\mathrm{Sp}(2n, \mathbb{R})$ on the Hilbert space $\mathcal{H}$. Note that $R(g)$ is defined up to a scalar. This projective representation is called the \textit{Weil representation} (also known as the metaplectic representation). To obtain an actual representation, rather than a merely projective one, one must pass to a double cover of $\mathrm{Sp}(2n, \mathbb{R})$, called the metaplectic group $\mathrm{Mp}(2n, \mathbb{R})$. For details of this construction, see \cite{lion1980weil}.

\section{The Weyl--Heisenberg algebra}
\label{sec:heisenbergalg}
\noindent
Because the Heisenberg algebra plays a central role in the Schrödinger representation, we introduce several identities and notational conventions to make later computations more manageable. We omit almost all proofs in this section. For details, see \cite{bar2021perturbed,bosch2025tensorsgaussiansalexanderpolynomial}.

We start by introducing notation we use throughout this paper. We assume that the reader is familiar with tensor products, but may not have seen them in this particular notation.

Let $\bk$ be a commutative ring and let $A$ be a unital associative $\bk$-algebra. For any finite set $I$, we write
\[
A^{\otimes I} := \bigotimes_{i\in I} A,
\]
Note that any bijection $\sigma\colon I\xrightarrow{\sim} J$ induces an algebra isomorphism
\[
P_\sigma\colon A^{\otimes I}\xrightarrow{\ \cong\ }A^{\otimes J},
\qquad
P_\sigma\left(\bigotimes_{i\in I} a_i\right)
=
\bigotimes_{j\in J} a_{\sigma^{-1}(j)}.
\]

\begin{definition}\label{def:leg}
For $i\in I$, the \emph{leg embedding}
\[
\iota_i^I\colon A\longrightarrow A^{\otimes I}
\]
is defined as the unital algebra map which sends $a\mapsto \bigotimes_{j\in I} x_j$ with $x_i=a$ and $x_{j}=1$ for $j \neq i$. \end{definition}
\begin{notation}
We denote
\[
a_i := \iota_i^I(a) \in A^{\otimes I}.
\]
Moreover, for a finite subset $S\subset I$ and $\{a_s\in A\}_{s\in S}$ we write $\prod_{s\in S} a_s$ for the product in $A^{\otimes I}$.
\end{notation}

Let $n \in \mathbb{N}$. Moreover, let $I$ be an ordered set with $n$ elements with ordering $i_{1} < i_{2}< \dots < i_{n}$.
\begin{definition}
  \label{def:heisenberg}
The \emph{$n$-th Weyl--Heisenberg algebra} $A_n(\bk)$ over $\bk$ is the unital associative $\bk$-algebra generated by $2n$ elements $\{\mathbf{x}_i,\mathbf{p}_i\}_{i \in I}$ subject to the relations
\begin{align*}
\mathbf{p}_i \mathbf{p}_j - \mathbf{p}_j \mathbf{p}_i = 0, \qquad \mathbf{x}_i \mathbf{x}_j - \mathbf{x}_j \mathbf{x}_i = 0, \qquad \mathbf{p}_i \mathbf{x}_j - \mathbf{x}_j \mathbf{p}_i = \delta_{ij}
\end{align*}
for all $i,j \in I$, where $\delta_{ij}$ is the Kronecker delta.
\end{definition}
\begin{remark}
The relations in the definition of the Weyl--Heisenberg algebra can also be expressed using the commutation bracket. For any two elements $\mathbf{x}$ and $\mathbf{y}$, their commutation bracket is defined as $[\mathbf{x}, \mathbf{y}] := \mathbf{x}\mathbf{y} - \mathbf{y}\mathbf{x}$. With this notation, the defining relations of the $n$-th Weyl--Heisenberg algebra can be written as
\begin{align*}
[\mathbf{p}_i, \mathbf{p}_j] = 0, \qquad [\mathbf{x}_i, \mathbf{x}_j] = 0, \qquad [\mathbf{p}_i, \mathbf{x}_j] = \delta_{ij}.
\end{align*}
\end{remark}

The Weyl--Heisenberg algebra $A_n(\bk)$ is closely related to the Heisenberg Lie algebra. In terms of the symplectic basis,  the Heisenberg Lie algebra in $n$ dimensions is defined as the Lie algebra $\mathfrak{h}_n$ with generators $\{\mathbf{p}_{k}\}_{k=1}^n \cup \{\mathbf{x}_{k}\}_{k=1}^n \cup \{\mathbf{z}\}$ and relations
\[
[\mathbf{p}_i, \mathbf{p}_j] = 0, \qquad [\mathbf{x}_i, \mathbf{x}_j] = 0, \qquad [\mathbf{p}_i, \mathbf{x}_j] = \delta_{ij}\mathbf{z}, \qquad \text{for all $1 \leq i, j \leq n$,}. 
\]
Here, $\mathbf{z}$ is a central element. We write $U(\mathfrak{h}_n)$ for the universal enveloping algebra of $\mathfrak{h}_n$, which is a noncommutative $\bk$-algebra generated by the elements of $\mathfrak{h}_n$. The $n$-th Weyl--Heisenberg algebra $A_n(\bk)$ is isomorphic to the quotient of $U(\mathfrak{h}_n)$ by the two-sided ideal generated by $\mathbf{z} - 1$, i.e., $A_n(\bk) \cong U(\mathfrak{h}_n) / (\mathbf{z} - 1)$.

The relation to the universal enveloping algebra allows us to invoke the Poincar\'e--Birkhoff--Witt theorem. We hereby endow $A_n(\bk)$ with an ordering
\begin{equation}
\label{eq:ordering}
  \mathbf{x}_{i_{1}} < \dots < \mathbf{x}_{i_{n}} < \mathbf{p}_{i_{1}} < \dots < \mathbf{p}_{i_{n}}.
\end{equation}
We call any ordering of this form a \textit{normal ordering}. Applying the Poincaré--Birkhoff--Witt theorem to $A_n(\bk)$, we deduce that the normally ordered monomials
\[
  x^{\alpha}p^{\beta}
  := x_{i_1}^{\alpha_{1}}\cdots x_{i_n}^{\alpha_{n}}
     p_{i_1}^{\beta_{1}}\cdots p_{i_n}^{\beta_{n}},
  \qquad (\alpha,\beta) \in \mathbb{Z}_{\geq 0}^{n} \times \mathbb{Z}_{\geq 0}^{n},
\]
form a vector space basis of $A_n(\bk)$. From this, we conclude that there exists a vector space isomorphism
\[
  \mathcal{N}_{I}\colon  \bigotimes_{i \in I}\bk[x_i,p_i] \xrightarrow{\sim} A_n(\bk),
  \text{ linearly defined by } \prod_{i \in I} x_i^{\alpha_{i}}p_i^{\beta_{i}} \mapsto (\prod_{i \in I} \mathbf{x}_i^{\alpha_{i}})(\prod_{j \in I}\mathbf{p}_j^{\beta_{j}}). \footnote{We implicitly used the isomorphism $\bk[x_1] \otimes \bk[x_2]\otimes \dots \otimes \bk[x_n] \cong \bk[x_1,x_2,\dots,x_n]$, which we shall do throughout this work.}
\]
   We omit the subscript $I$ in $\mathcal{N}_{I}$ when it is clear from context. To ease some of the notation, set $\bk[z_I]:=\bk[x_I,p_I]:=\bigotimes_{i \in I}\bk[x_i,p_i]$ with $z_i=(x_i,p_i)$.
\begin{example}
Let $I = \{1,2\}$ and $f = \mathbf{p}_1^{2} \mathbf{x}_1\mathbf{x}_{2} \in A_{2}(k)$. Then we find
\[
f = \mathbf{p}_1^{2}  \mathbf{x}_1 \mathbf{x}_2 = 2\mathbf{x}_2\mathbf{p}_1+\mathbf{x}_1\mathbf{x}_2\mathbf{p}_1^2 = \mathcal{N}( 2 x_{2}p_{1} + x_{1} x_{2} p_{1}^{2}  ).
\]
\end{example}
The space $\bk[z_I]$ can be endowed with the structure of a commutative algebra by viewing it as a polynomial ring. In contrast, $A_n(\bk)$ is a noncommutative algebra. Therefore, the map $\mathcal{N} \colon \bk[z_I] \to A_n(\bk)$ is not an isomorphism of algebras. We will use this distinction throughout this paper.

As we will see in the upcoming discussion, we apply exponential and logarithmic maps repeatedly to elements of the Weyl--Heisenberg algebra. Given the significance of these operations in our approach, we formally define the deformation of the underlying algebraic structure that makes them well-defined.

\begin{definition}
Let $A$ be an algebra over a field $\bk$. The \emph{trivial deformation} of $A$ into an algebra over $\bk\llbracket h \rrbracket$, is the $\bk \llbracket h \rrbracket$-algebra that arises by extending $\bk$ to $\bk \llbracket h \rrbracket$. In other words: it is the tensor product $\bk \llbracket h \rrbracket \otimes_{\bk} A$ of $\bk \llbracket h \rrbracket$ and $A$ as algebras, where the multiplication is given by $(a \otimes x)(b \otimes y) = ab \otimes xy$ for all $a, b \in \bk \llbracket h \rrbracket$ and $x, y \in A$.
\end{definition}
Every reference to the Weyl--Heisenberg algebra in our discussion means that we consider $A_n(\bk)$ as a $\bk \llbracket h \rrbracket$-algebra. In this way, the exponential map and logarithm are well-defined, so we can invoke the BCH formula. The map $\mathcal{N}_I$ extends $\bk \llbracket h \rrbracket$-linearly and $h$-adically continuously, making it an isomorphism of topological $\bk \llbracket h \rrbracket$-modules.

\begin{notation}
  Assume the generators $\{\mathbf{x}_i,\mathbf{p}_i\}_{i \in I}$ of $A_n(\bk)$ to be ordered as in \eqref{eq:ordering}. Moreover, set $x = (x_{i_{1}}, \dots, x_{i_{n}})$ and $p = (p_{i_{1}}, \dots, p_{i_{n}})$. Similarly, we denote $\mathbf{x}:=(\mathbf{x}_{i_{1}},\dots,\mathbf{x}_{i_{n}})$ and $\mathbf{p} := (\mathbf{p}_{i_{1}},\dots,\mathbf{p}_{i_{n}})$. Extending the map $\mathcal{N}$ entry-wise yields $\mathbf{x}= \mathcal{N}(x)$ and $\mathbf{p} = \mathcal{N}(p)$. Throughout this paper we shall use the Einstein summation convention.
\end{notation}

\begin{lemma}
  \label{lem:expconj}
Let $A\in \mathrm{Mat}_{n}(\bk\llbracket h \rrbracket)$. The following identities hold:
\begin{enumerate}[\normalfont i)]
  \item\label{item:1}   $\exp(h \mathbf{x}^{\top} A \mathbf{p})\mathbf{x}_{j} \exp(-h \mathbf{x}^{\top} A \mathbf{p})= \mathbf{x}_{i} (e^{hA})_{ij}$;
        \item  $\exp(h \mathbf{x}^{\top} A \mathbf{p})\mathbf{p}_{j} \exp(-h \mathbf{x}^{\top} A \mathbf{p})= \mathbf{p}_{i} (e^{-hA})^{\top}_{ij}$.
\end{enumerate}

\end{lemma}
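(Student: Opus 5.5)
We prove both identities with the standard formula $e^{X} Y e^{-X} = e^{\mathrm{ad}_X}(Y) = \sum_{k\ge 0} \frac{1}{k!}\mathrm{ad}_X^k(Y)$. This formula holds in the $h$-adically completed algebra $A_n(\bk)\llbracket h \rrbracket$ once $X$ carries a factor of $h$, since the series converges $h$-adically. With $X = h\,\mathbf{x}^{\top}A\mathbf{p} = h\,\mathbf{x}_k A_{kl}\mathbf{p}_l$, the plan is to compute a single commutator. The key observation is that $\mathrm{ad}_X$ preserves the $\bk\llbracket h\rrbracket$-span of $\{\mathbf{x}_j\}$, and likewise the span of $\{\mathbf{p}_j\}$, acting there by a matrix. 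The exponential series then becomes a matrix exponential.

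For (i), use $[\mathbf{p}_l,\mathbf{x}_j]=\delta_{lj}$ and the fact that the $\mathbf{x}$'s commute with each other. This gives
\[
[\mathbf{x}_k A_{kl}\mathbf{p}_l, \mathbf{x}_j] = \mathbf{x}_k A_{kl}[\mathbf{p}_l,\mathbf{x}_j] = \mathbf{x}_k A_{kj},
\]
so $\mathrm{ad}_X(\mathbf{x}_j) = h\,\mathbf{x}_i A_{ij}$. By induction on $k$, using linearity of $\mathrm{ad}_X$ and that the entries of $A$ are central scalars, we get $\mathrm{ad}_X^k(\mathbf{x}_j) = \mathbf{x}_i (h^kA^k)_{ij}$. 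Summing the series yields $\mathbf{x}_i (e^{hA})_{ij}$.

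For (ii), similarly,
\[
[\mathbf{x}_k A_{kl}\mathbf{p}_l,\mathbf{p}_j] = [\mathbf{x}_k,\mathbf{p}_j]A_{kl}\mathbf{p}_l = -\delta_{kj}A_{kl}\mathbf{p}_l = -A_{jl}\mathbf{p}_l = \mathbf{p}_i(-A^{\top})_{ij}.
\]
Iterating gives $\mathrm{ad}_X^k(\mathbf{p}_j) = \mathbf{p}_i\big((-hA^{\top})^k\big)_{ij}$. The sum is therefore $\mathbf{p}_i (e^{-hA^{\top}})_{ij} = \mathbf{p}_i (e^{-hA})^{\top}_{ij}$, since transposition commutes with the power series.

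The only point needing care is justifying $e^{X}Ye^{-X}=e^{\mathrm{ad}_X}Y$ in the trivially deformed algebra. We will do this by the usual argument: set $F(s)=e^{sX}Ye^{-sX}$ as a formal power series in $s$, note that $F'(s) = \mathrm{ad}_X F(s)$, and compare coefficients. Alternatively, expand both sides and match terms using the binomial identity $\mathrm{ad}_X^k(Y)=\sum_{a+b=k}\binom{k}{a}X^aY(-X)^b$. All sums are finite in each $h$-degree, so convergence is automatic. We expect this justification to be routine rather than a real obstacle; the main bookkeeping risk is the index placement, namely getting $A$ versus $A^{\top}$ right in (ii).
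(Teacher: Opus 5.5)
Your proof is correct. The commutators $\mathrm{ad}_X(\mathbf{x}_j)=h\,\mathbf{x}_iA_{ij}$ and $\mathrm{ad}_X(\mathbf{p}_j)=\mathbf{p}_i(-hA^{\top})_{ij}$ are right, and summing $e^{\mathrm{ad}_X}$ (justified $h$-adically as you describe) gives both identities with the transpose correctly placed. The paper omits this proof and cites earlier work, and your $\mathrm{Ad}_{e^{X}}=e^{\mathrm{ad}_X}$ computation is the standard argument for it.
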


Let $P \colon A_{n}(\bk) \otimes A_{n}(\bk) \to A_{n}(\bk) \otimes A_{n}(\bk)$ be the interchanging map defined by $a \otimes b \mapsto b \otimes a$. Also, let $\widetilde{P}$ be the map that interchanges the columns of a $2\times 2$ matrix.
\begin{proposition}
  \label{pro:adexp}
  Let $n=2$ and $A \in \mathrm{Mat}_{2}(\bk \llbracket h \rrbracket)$. The following identity holds:
  \[
    P \circ \mathrm{Ad}_{\exp(h \mathbf{x}^{\top} A \mathbf{p})} \mathbf{x} =  \widetilde{P}((e^{hA})^{\top}) \mathbf{x}.
    \]
\end{proposition}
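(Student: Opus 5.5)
The plan is to reduce the statement to Lemma~\ref{lem:expconj}\,\ref{item:1}) and then do a short index computation. With $n=2$ we identify $A_2(\bk)\cong A_1(\bk)\otimes A_1(\bk)$, with $\mathbf{x}_1,\mathbf{p}_1$ in the first leg and $\mathbf{x}_2,\mathbf{p}_2$ in the second. Under this identification the interchanging map $P$ is the algebra automorphism determined by $\mathbf{x}_1\leftrightarrow\mathbf{x}_2$ and $\mathbf{p}_1\leftrightarrow\mathbf{p}_2$. Both sides of the claimed identity are column vectors with two entries, with $P$ and $\mathrm{Ad}$ applied entrywise. It therefore suffices to compare the entries one index $j\in\{1,2\}$ at a time.

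\textbf{Step 1.} Put $M:=e^{hA}$. By Lemma~\ref{lem:expconj}\,\ref{item:1}),
\[
\mathrm{Ad}_{\exp(h\mathbf{x}^\top A\mathbf{p})}\mathbf{x}_j=\mathbf{x}_i M_{ij}.
\]
Since $M$ has scalar entries in $\bk\llbracket h\rrbracket$ and $P$ is $\bk\llbracket h\rrbracket$-linear, applying $P$ gives
\[
P\bigl(\mathrm{Ad}_{\exp(h\mathbf{x}^\top A\mathbf{p})}\mathbf{x}_j\bigr)=P(\mathbf{x}_i)\,M_{ij}=\mathbf{x}_{\sigma(i)}M_{ij},
\]
where $\sigma$ is the transposition $(1\,2)$. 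Writing this out, the first entry is $\mathbf{x}_2M_{11}+\mathbf{x}_1M_{21}$ and the second entry is $\mathbf{x}_2M_{12}+\mathbf{x}_1M_{22}$.

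\textbf{Step 2.} Compute the right-hand side. We have $M^\top=\begin{pmatrix}M_{11}&M_{21}\\ M_{12}&M_{22}\end{pmatrix}$. Interchanging its columns gives $\widetilde P(M^\top)=\begin{pmatrix}M_{21}&M_{11}\\ M_{22}&M_{12}\end{pmatrix}$. Multiplying by $\mathbf{x}=(\mathbf{x}_1,\mathbf{x}_2)^\top$ yields the entries $M_{21}\mathbf{x}_1+M_{11}\mathbf{x}_2$ and $M_{22}\mathbf{x}_1+M_{12}\mathbf{x}_2$. These coincide with Step~1 because the scalars $M_{ij}$ commute with the $\mathbf{x}_k$.

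\textbf{Main obstacle.} The computation itself is routine. The only real care needed is interpretive: $P$ must be read as acting on the legs of the elements $\mathbf{x}_j$, not as permuting the entries of the vector $\mathbf{x}$. Under that reading the permutation of the legs, $\mathbf{x}_i\mapsto\mathbf{x}_{\sigma(i)}$, is exactly what the column swap $\widetilde P$ encodes. Equivalently, $\widetilde P(N)=NS$ with $S=\begin{pmatrix}0&1\\1&0\end{pmatrix}$, and the whole identity reads $(\mathbf{x}^\top S M)^\top=M^\top S\,\mathbf{x}$.
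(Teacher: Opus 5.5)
Your proof is correct and follows the paper's route. The paper simply says the identity is an immediate consequence of Lemma~\ref{lem:expconj}, and your entrywise index computation is exactly the verification left implicit there. You read $P$ as the leg swap $\mathbf{x}_1\leftrightarrow\mathbf{x}_2$ under $A_2(\bk)\cong A_1(\bk)\otimes A_1(\bk)$, applied entry by entry; this is the reading that makes the statement type-check, and it matches how the paper later uses $P$ in Proposition~\ref{pro:usefulidentity}.
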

\begin{proof}
This is an immediate consequence of Lemma \ref{lem:expconj}.
\end{proof}
The following proposition in inspired by \cite[Lemma 10]{bar2021perturbed}
\begin{proposition}
  \label{pro:normalordering}
Let $A\in \mathrm{Mat}_{n}(\bk \llbracket h \rrbracket)$. The following identity holds:
\[
\exp(h  \mathbf{x}^{\top} A \mathbf{p}) = \mathcal{N} \left(  \exp(x^{\top} (e^{h  A}- \mathds{1}) p)\right).
\]
\end{proposition}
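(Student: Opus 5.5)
We prove the identity by an ODE/uniqueness argument in the parameter $h$: both sides satisfy the same first-order differential equation in $h$ with the same initial value. Set
\[
F(h) := \exp(h\,\mathbf{x}^{\top}A\mathbf{p}), \qquad G(h) := \mathcal{N}\bigl(\exp(x^{\top}(e^{hA}-\mathds{1})p)\bigr).
\]
We treat $A$ as constant in $h$; if $A$ has entries in $\bk\llbracket h\rrbracket$, first replace $h$ by an independent formal variable $s$, prove the identity in $s$, and then specialize $s=h$. Both $F$ and $G$ are well-defined in the $h$-adically completed algebra, and $F(0)=G(0)=1$. Clearly $F'(h) = F(h)\,(\mathbf{x}^{\top}A\mathbf{p})$. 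We will show that $G'(h)=G(h)\,(\mathbf{x}^{\top}A\mathbf{p})$ as well.

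Comparing coefficients of $h^k$ in this equation gives a recursion that determines all coefficients from the value at $h=0$. Hence the common ODE together with the common initial value forces $F=G$.

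To compute $G'$, write $B(h)=e^{hA}-\mathds{1}$, so that $B'=e^{hA}A$. Since $\mathcal{N}$ is linear and continuous, we may differentiate under it:
\[
G'(h)=\mathcal{N}\bigl(x^{\top}e^{hA}Ap\,\exp(x^{\top}Bp)\bigr).
\]
Next we compute $G(h)\,\mathbf{x}^{\top}A\mathbf{p}$ as an element of $A_n(\bk)$ and re-express it in normal order. For a normally ordered monomial $x^{\alpha}p^{\beta}$, right-multiplying by $\mathbf{x}_j$ requires commuting $\mathbf{x}_j$ past the $\mathbf{p}$'s. Since $[\mathbf{p}_i,\mathbf{x}_j]=\delta_{ij}$, we have
\[
\mathcal{N}(f)\,\mathbf{x}_j = \mathcal{N}\bigl(x_j f + \partial_{p_j} f\bigr).
\]
Right multiplication by $\mathbf{p}_i$ is simply $\mathcal{N}(f)\,\mathbf{p}_i=\mathcal{N}(f\,p_i)$. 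Applying these rules with $f=\exp(x^{\top}Bp)$ gives $\partial_{p_j}f = (x^{\top}B)_j\, f$. Therefore
\[
G\,\mathbf{x}_j A_{ji}\mathbf{p}_i=\mathcal{N}\bigl((x_j + (x^{\top}B)_j)A_{ji}p_i\, f\bigr)=\mathcal{N}\bigl(x^{\top}(\mathds{1}+B)Ap\, f\bigr).
\]
Since $\mathds{1}+B=e^{hA}$, this matches $G'(h)$ exactly.

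The main obstacle is justifying the commutation rule $\mathcal{N}(f)\mathbf{x}_j=\mathcal{N}(x_jf+\partial_{p_j}f)$. We first verify it on monomials, using $\mathbf{p}_j^{\beta}\mathbf{x}_j=\mathbf{x}_j\mathbf{p}_j^{\beta}+\beta\mathbf{p}_j^{\beta-1}$ and the fact that $\mathbf{x}_j$ commutes with $\mathbf{p}_i$ for $i\ne j$ and with all $\mathbf{x}$'s. It then extends to formal power series by linearity and $h$-adic continuity; here note that $f$ is $h$-adically convergent because $B=O(h)$. The uniqueness step is routine: if $D=F-G$ then $D'=D\cdot(\mathbf{x}^{\top}A\mathbf{p})$ and $D(0)=0$, so induction on $h$-degree gives $D=0$. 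Alternatively, one can check directly that $G(h)F(h)^{-1}$ has zero derivative.
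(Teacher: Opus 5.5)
Your proof is correct. The paper itself gives no proof of this proposition: it omits the argument and points to \cite[Lemma 10]{bar2021perturbed} and \cite{bosch2025tensorsgaussiansalexanderpolynomial}. Your ODE-in-$h$ argument is therefore a self-contained proof of a statement the paper only cites.

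The only algebraic input you need is the right-multiplication rule $\mathcal{N}(f)\mathbf{x}_j=\mathcal{N}(x_jf+\partial_{p_j}f)$. This is the same fact the paper uses, in commutator form $[\mathcal{N}(g),\mathbf{x}_j]=\mathcal{N}(\partial_{p_j}g)$, in the proof of Proposition~\ref{pro:perturbedexptill2}. Beyond that you use only the coefficient recursion $(k+1)D_{k+1}=D_k\,\mathbf{x}^{\top}A\mathbf{p}$, which requires $\mathbb{Q}\subseteq\bk$; this is already implicit in the use of $\exp$. Two further choices are sound: introducing the auxiliary variable $s$ and specializing $s\mapsto h$ correctly handles an $A$ that depends on $h$, and working in the $h$-adically completed algebra is the right setting for both exponentials.

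A common alternative route goes through the faithful action of $A_n(\bk)$ on $\bk[x_1,\dots,x_n]$ with $\mathbf{p}_i\mapsto\partial_{x_i}$. There the left side is the exponential of the derivation $x^{\top}A\partial$, hence the algebra automorphism $f(x)\mapsto f(e^{hA^{\top}}x)$. The normally ordered right side is exactly the Taylor expansion of $f\bigl(x+(e^{hA}-\mathds{1})^{\top}x\bigr)$ about $x$. That version explains conceptually where $e^{hA}-\mathds{1}$ comes from, but it relies on faithfulness of the polynomial representation. Your argument avoids this by staying entirely inside the algebra and checking a single first-order differential equation.
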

We introduce a new notation for elements of the type for which Proposition \ref{pro:normalordering} can be applied.
\begin{notation}
  \label{def:monhom}
  Denote $\mathcal{A}_{n}(\bk) := \exp(h \mathrm{Mat}_n(\bk \llbracket h \rrbracket))$ with monoid structure given by multiplication.
  Define the map $\varphi \colon \mathcal{A}_n(\bk) \to A_n(\bk)$ by
  \[
    A \mapsto \mathcal{N} \left( \exp(  x^{\top}   (A - \mathds{1})  p ) \right).
    \]
\end{notation}

The map $\varphi$ provides a dictionary between taking tensor products and adjoining block-matrices. As shall be shown below, adjoining identity factors in a tensor product corresponds, under $\varphi$, to adjoining identity summands in a direct sum.

\begin{notation}
  \label{not:tensorthings}

  Fix integers $m \ge 2$ and $1 \le i < j \le m$ and let $A\in\mathcal A_2(\Bbbk)$. Define
  \[
    \varphi_{i,j}^{m}(A)
    :=
    \bigl(I^{\otimes(i-1)} \otimes \mathrm{Id} \otimes I^{\otimes(j-i-1)} \otimes \mathrm{Id} \otimes I^{\otimes(m-j)}\bigr)\bigl(\varphi(A)\bigr).
  \]
  Here $I$ denotes the identity on the underlying tensor factor.

Let $R_k = \bk \llbracket h \rrbracket$ and $R=\bigoplus_{k=1}^m R_k$. Consider the canonical injections $\iota_k \colon R_k \to R$ and projections $\pi_k\colon R \to R_k$. Moreover, let $T\in \mathrm{End}(R_i\oplus R_j)$ having entries $T_{ab}\in \mathrm{Hom}(R_b,R_a)$ for $a,b\in\{i,j\}$.  We define the block-insertion map
  \[
    \Psi^{m}_{i,j}(T)
    :=
    \sum_{k\neq i,j}\iota_k\pi_k
    +
    \sum_{a,b\in\{i,j\}}\iota_aT_{ab}\pi_b
    \in \mathrm{End}(R).
  \]
  In other words,  $\Psi^m_{i,j}(T)$ acts as the identity on $R_k$ for $k\notin\{i,j\}$, and reduces to $T$ on $R_i\oplus R_j$. Hence, $\Psi^m_{i,j}$ places a $2\times2$ block $T$ in positions $i$ and $j$.
\end{notation}  
\begin{proposition}
  \label{pro:phipsi}
Let $A \in \mathcal{A}_{2}(\bk)$. The following equality holds:
\[
\varphi^{m}_{i,j}(A) = \varphi(\Psi^{m}_{i,j}(A)).
\]
\end{proposition}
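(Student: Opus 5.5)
We compare both sides directly via the normal-ordering map $\mathcal{N}$. Write $A=e^{hB}$ with $B\in \mathrm{Mat}_2(\bk\llbracket h\rrbracket)$, so that $A-\mathds{1}=e^{hB}-\mathds{1}$ has entries divisible by $h$. This makes $\exp(x^{\top}(A-\mathds{1})p)$ a well-defined $h$-adically convergent element of $\bk\llbracket h\rrbracket[x_1,x_2,p_1,p_2]$. By Notation~\ref{def:monhom},
\[
\varphi(A)=\mathcal{N}_{\{1,2\}}\Bigl(\exp\bigl(\textstyle\sum_{a,b\in\{1,2\}}(A-\mathds{1})_{ab}\,x_a p_b\bigr)\Bigr).
\]
Applying the leg map that sends the factors $1,2$ to positions $i,j$ in $A_m(\bk)\cong A_1(\bk)^{\otimes m}$ relabels $\mathbf{x}_1,\mathbf{p}_1$ as $\mathbf{x}_i,\mathbf{p}_i$ and $\mathbf{x}_2,\mathbf{p}_2$ as $\mathbf{x}_j,\mathbf{p}_j$. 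It inserts $1$ in all other legs.

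\textbf{Step 1: the leg map commutes with normal ordering.} We check that this relabelling is compatible with $\mathcal{N}$. Normally ordered monomials $\mathbf{x}_1^{\alpha_1}\mathbf{x}_2^{\alpha_2}\mathbf{p}_1^{\beta_1}\mathbf{p}_2^{\beta_2}$ are sent to $\mathbf{x}_i^{\alpha_1}\mathbf{x}_j^{\alpha_2}\mathbf{p}_i^{\beta_1}\mathbf{p}_j^{\beta_2}$. These are again normally ordered in $A_m(\bk)$, since all $\mathbf{x}$'s still precede all $\mathbf{p}$'s, and the $\mathbf{x}$'s commute among themselves, as do the $\mathbf{p}$'s. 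Hence the leg map equals $\mathcal{N}_{\{1,\dots,m\}}\circ\sigma\circ\mathcal{N}_{\{1,2\}}^{-1}$, where $\sigma$ is the commutative substitution $x_1\mapsto x_i$, $x_2\mapsto x_j$, $p_1\mapsto p_i$, $p_2\mapsto p_j$. By $h$-adic continuity, this extends to the exponential series. It follows that
\[
\varphi^m_{i,j}(A)=\mathcal{N}\Bigl(\exp\bigl(\textstyle\sum_{a,b\in\{i,j\}}T_{ab}\,x_a p_b\bigr)\Bigr),\qquad T=A \text{ indexed by } \{i,j\}.
\]

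\textbf{Step 2: compute the right-hand side.} By Notation~\ref{not:tensorthings}, $\Psi^m_{i,j}(A)$ has entries $A_{ab}$ for $a,b\in\{i,j\}$, entries $\delta_{kl}$ otherwise, and zero mixed entries. Thus $\Psi^m_{i,j}(A)-\mathds{1}_m$ is supported on the $\{i,j\}$ block, where it equals $A-\mathds{1}_2$. Consequently
\[
x^{\top}(\Psi^m_{i,j}(A)-\mathds{1})p=\sum_{a,b\in\{i,j\}}(A-\mathds{1})_{ab}x_ap_b .
\]
We also need $\Psi^m_{i,j}(A)\in\mathcal{A}_m(\bk)$, which holds because $\Psi^m_{i,j}(e^{hB})=e^{h\Psi_0(B)}$, where $\Psi_0$ is block insertion padded by zeros. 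Block-diagonal exponentiation commutes with the insertion. Applying $\mathcal{N}$ then yields exactly the expression from Step 1.

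\textbf{Main obstacle.} The delicate point is Step 1: the leg embedding is an algebra map, whereas $\mathcal{N}$ is not. The argument therefore has to go through the basis of normally ordered monomials rather than through products. One must also note that the ordering of the legs $i<j$ matches the ordering $1<2$, so no reordering terms arise. In fact none would arise even otherwise, since only $\mathbf{x}$–$\mathbf{p}$ order matters. With this settled, and using $h$-adic continuity of $\mathcal{N}$ to pass through the infinite sum, the remaining identifications are bookkeeping.
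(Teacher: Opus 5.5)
Your argument is correct. The paper states this proposition without proof (that section explicitly omits most proofs and defers to the cited works), so there is no competing route to compare against. Your direct verification is exactly what is needed:
\begin{itemize}
\item The leg embedding is an algebra map that sends normally ordered monomials in $\mathbf{x}_1,\mathbf{x}_2,\mathbf{p}_1,\mathbf{p}_2$ to normally ordered monomials in $\mathbf{x}_i,\mathbf{x}_j,\mathbf{p}_i,\mathbf{p}_j$. Hence it intertwines $\mathcal{N}$ with the commutative relabelling, and this extends to the exponential by $h$-adic continuity.
\item $\Psi^m_{i,j}(A)-\mathds{1}_m$ is just $A-\mathds{1}_2$ padded by zeros.
\item You also check that $\Psi^m_{i,j}(A)\in\mathcal{A}_m(\bk)$, which is needed for $\varphi$ to be defined on it.
\end{itemize}
Your reading of the padding map as inserting the unit in the other legs is the right one; the paper's example $\varphi^3_{1,2}(A)=\varphi(A)\otimes 1$ confirms it.

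One slip to fix: in the final display of Step 1, the coefficients should be the entries of $A-\mathds{1}$ (relabelled by $\{i,j\}$), not of $A$. As written, the expressions in Step 1 and Step 2 do not literally agree; with this correction they do.
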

\begin{example}
Consider
\[
  A =
  \begin{pmatrix}
    T & 0 \\
    1 - T^{2} & T
  \end{pmatrix},
  \qquad
  T = e^{ht}.
\]

When we insert this $2\times 2$ block into positions $1,2$ of a bigger $3\times 3$ direct sum via $\Psi^{3}_{1,2}$, we obtain
\[
  \Psi^{3}_{1,2}(A)
  =
  \begin{pmatrix}
    T & 0 & 0 \\[2pt]
    1 - T^{2} & T & 0 \\[2pt]
    0 & 0 & 1
  \end{pmatrix}.
\]
Note that  $\Psi^{3}_{1,2}(A)$ acts as $A$ on the first two summands and as the identity on the third.
Consistent with the previous discussion, we find
\[
  \varphi^{3}_{1,2}(A) = \varphi(A) \otimes 1 = \varphi\bigl(\Psi^{3}_{1,2}(A)\bigr),
\]
which agrees with Proposition~\ref{pro:phipsi}.
\end{example}

Combining Proposition \ref{pro:normalordering} with Lemma \ref{lem:expconj} gives rise to the following.
\begin{proposition}
  \label{pro:phixandp}
  Let $A \in \mathcal{A}_n(\bk)$. The following identities hold:
\begin{enumerate}[\normalfont i)]
    \item $\varphi(A) \mathbf{x}_j \varphi(A)^{-1} = \mathbf{x}_i A_{ij}$;
    \item $\varphi(A) \mathbf{p}_j \varphi(A)^{-1} = \mathbf{p}_i (A^{-1})^{\top}_{ij}$.
  \end{enumerate}
\end{proposition}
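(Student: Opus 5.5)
Since $A \in \mathcal{A}_n(\bk) = \exp(h\,\mathrm{Mat}_n(\bk\llbracket h\rrbracket))$, I would first write $A = e^{hB}$ for some $B \in \mathrm{Mat}_n(\bk\llbracket h\rrbracket)$. Then $A - \mathds{1} = e^{hB} - \mathds{1}$, and Proposition~\ref{pro:normalordering} applied to $B$ gives
\[
\varphi(A) = \mathcal{N}\bigl(\exp(x^{\top}(e^{hB}-\mathds{1})p)\bigr) = \exp(h\,\mathbf{x}^{\top}B\mathbf{p}).
\]
This turns the normally ordered object $\varphi(A)$ back into a genuine exponential in the Weyl--Heisenberg algebra. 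It is invertible, with inverse $\exp(-h\,\mathbf{x}^{\top}B\mathbf{p})$, so the conjugations in the statement make sense.

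With this rewriting, both identities are Lemma~\ref{lem:expconj} with $A$ replaced by $B$. For i), Lemma~\ref{lem:expconj}\,i) gives
\[
\varphi(A)\mathbf{x}_j\varphi(A)^{-1} = \mathbf{x}_i (e^{hB})_{ij} = \mathbf{x}_i A_{ij}.
\]
For ii), Lemma~\ref{lem:expconj}\,ii) gives
\[
\varphi(A)\mathbf{p}_j\varphi(A)^{-1} = \mathbf{p}_i \bigl((e^{-hB})^{\top}\bigr)_{ij}.
\]
Since $e^{-hB} = (e^{hB})^{-1} = A^{-1}$, this equals $\mathbf{p}_i (A^{-1})^{\top}_{ij}$.

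The only step needing care is the well-definedness of the choice of $B$. A given $A$ may have several logarithms, but this does not matter: every quantity in the argument depends on $B$ only through $e^{hB} = A$. The left-hand side is defined via $\varphi(A)$, which is intrinsic to $A$, and the right-hand sides involve only $A$ and $A^{-1}$. I would also note that all exponentials converge $h$-adically, because $hB$ has entries in $h\,\bk\llbracket h\rrbracket$, so the BCH-type manipulations underlying Lemma~\ref{lem:expconj} are legitimate. I do not expect a real obstacle: the content lies entirely in Proposition~\ref{pro:normalordering} and Lemma~\ref{lem:expconj}, and this statement is their combination.
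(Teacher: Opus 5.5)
Your proposal is correct and is essentially the paper's own argument: the paper gives no separate proof, only the remark that the proposition follows by combining Proposition~\ref{pro:normalordering} (which identifies $\varphi(e^{hB})$ with $\exp(h\,\mathbf{x}^{\top}B\mathbf{p})$) with Lemma~\ref{lem:expconj}, which is exactly what you do. Your caveat about the choice of $B$ is harmless but unnecessary, since $\log$ inverts $\exp$ on $h\,\mathrm{Mat}_n(\bk\llbracket h\rrbracket)$, so $B$ is uniquely determined by $A$.
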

With the required structure in place, we can now derive the following useful identity.
\begin{theorem}
  \label{cor:identity}
  Let $A,B \in \mathcal{A}_n(\bk)$. The following equality holds:
  \[
    \varphi(A) \varphi(B)= \varphi(A B),
    \]
    i.e., $\varphi$ is a monoid homomorphism.
\end{theorem}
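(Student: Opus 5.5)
The plan is to reduce the claim to the BCH/exponential structure already encoded in Proposition~\ref{pro:normalordering}. Since $A,B\in\mathcal{A}_n(\bk)=\exp(h\,\mathrm{Mat}_n(\bk\llbracket h\rrbracket))$, I would write $A=e^{ha}$ and $B=e^{hb}$ with $a,b\in\mathrm{Mat}_n(\bk\llbracket h\rrbracket)$. By Proposition~\ref{pro:normalordering}, $\varphi(A)=\exp(h\,\mathbf{x}^\top a\,\mathbf{p})$ and $\varphi(B)=\exp(h\,\mathbf{x}^\top b\,\mathbf{p})$. So the statement amounts to showing
\[
\exp(h\,\mathbf{x}^\top a\,\mathbf{p})\exp(h\,\mathbf{x}^\top b\,\mathbf{p})=\exp(h\,\mathbf{x}^\top c\,\mathbf{p}),\qquad e^{hc}=e^{ha}e^{hb}.
\]
Here $c=h^{-1}\log(e^{ha}e^{hb})$ is well defined in $\mathrm{Mat}_n(\bk\llbracket h\rrbracket)$ by the matrix BCH formula, so $AB\in\mathcal{A}_n(\bk)$ and $\varphi(AB)$ makes sense.

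The key step is a Lie algebra homomorphism. The map $\rho\colon M\mapsto \mathbf{x}^\top M\mathbf{p}=\sum_{i,j}M_{ij}\mathbf{x}_i\mathbf{p}_j$ from $\mathfrak{gl}_n$ to $A_n(\bk)$ satisfies $[\rho(M),\rho(N)]=-\rho([M,N])$, or $+\rho([M,N])$ depending on sign conventions. I would verify this directly from $[\mathbf{p}_j,\mathbf{x}_k]=\delta_{jk}$:
\[
[\mathbf{x}_i\mathbf{p}_j,\mathbf{x}_k\mathbf{p}_l]=\delta_{jk}\mathbf{x}_i\mathbf{p}_l-\delta_{li}\mathbf{x}_k\mathbf{p}_j .
\]
Because BCH expresses $\log(e^Xe^Y)$ as a universal series of iterated brackets, an (anti-)homomorphism of Lie algebras intertwines BCH. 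The overall $h$-adic convergence is ensured by the factor $h$ in front. This gives $\exp(h\rho(a))\exp(h\rho(b))=\exp(h\rho(c'))$, where $c'$ is the BCH product of $a,b$ in $\mathfrak{gl}_n$, possibly in the opposite order.

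The main obstacle is the sign/order issue: an anti-homomorphism would give $e^{hc'}=e^{hb}e^{ha}$ instead of $e^{ha}e^{hb}$. I would pin down the order without tracking the sign through BCH by instead using Proposition~\ref{pro:phixandp}(i). Set $U=\varphi(A)\varphi(B)$. Conjugating twice gives $U\mathbf{x}_jU^{-1}=\varphi(A)\mathbf{x}_k\varphi(A)^{-1}B_{kj}=\mathbf{x}_iA_{ik}B_{kj}=\mathbf{x}_i(AB)_{ij}$. This matches the conjugation action of $\varphi(AB)$, so the correct BCH element is the one with $e^{hc}=AB$; the alternative $BA$ would produce $\mathbf{x}_i(BA)_{ij}$, which generally differs. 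Thus $U$ and $\varphi(AB)$ are both of the form $\exp(h\rho(\cdot))$, and their arguments exponentiate to matrices with the same action on the $\mathbf{x}_j$.

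To conclude, I would note that $M\mapsto[\rho(M),\mathbf{x}_j]=\mathbf{x}_iM_{ij}$ (up to sign) is injective. Hence $\exp(h\rho(M))$ is determined by $e^{hM}$ via Lemma~\ref{lem:expconj}, and $h\log$ is injective on $\mathcal{A}_n$. Combining this with Proposition~\ref{pro:normalordering} yields $\varphi(A)\varphi(B)=\varphi(AB)$.
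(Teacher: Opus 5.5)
Your argument is correct. The paper gives no written proof of this theorem; it only presents the identity as a consequence of Proposition~\ref{pro:normalordering} and the conjugation formulas of Proposition~\ref{pro:phixandp}. Those are exactly your ingredients, but you combine them differently from a pure conjugation argument.

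Using Proposition~\ref{pro:phixandp} alone, one would argue as follows: show that $\varphi(A)\varphi(B)$ and $\varphi(AB)$ induce the same automorphism on all $\mathbf{x}_j$ and $\mathbf{p}_j$, conclude that $\varphi(AB)^{-1}\varphi(A)\varphi(B)$ is central and hence a scalar in $\bk\llbracket h\rrbracket$, and then normalize that scalar to $1$ (e.g.\ by letting both sides act on the constant polynomial in the representation $\mathbf{p}_j\mapsto\partial_{x_j}$).

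Your BCH step instead shows directly that $\varphi(A)\varphi(B)$ is itself of the form $\exp(h\,\mathbf{x}^\top c'\mathbf{p})=\varphi(e^{hc'})$. Matching only the action on the $\mathbf{x}_j$, together with linear independence of the $\mathbf{x}_i$, then forces $e^{hc'}=AB$, so no centre computation or normalization is needed.

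Two simplifications are available. First, the order-fixing detour is unnecessary. Your own formula $[\mathbf{x}_i\mathbf{p}_j,\mathbf{x}_k\mathbf{p}_l]=\delta_{jk}\mathbf{x}_i\mathbf{p}_l-\delta_{li}\mathbf{x}_k\mathbf{p}_j$ is exactly the image of $[E_{ij},E_{kl}]$ for the matrix units $E_{ij}$. So $M\mapsto\mathbf{x}^\top M\mathbf{p}$ is a genuine Lie algebra homomorphism with no sign, and BCH alone already gives $e^{hc'}=e^{ha}e^{hb}=AB$. Second, the injectivity of $M\mapsto[\mathbf{x}^\top M\mathbf{p},\mathbf{x}_j]$ in your last paragraph is not what the conclusion uses. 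You only need linear independence of the $\mathbf{x}_i$ and the fact that $\varphi$ is a function of the matrix; then $e^{hc'}=AB$ immediately gives $\exp(h\,\mathbf{x}^\top c'\mathbf{p})=\varphi(AB)$.
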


Observe that $\varphi(A)$, for $A \in \mathcal{A}_n(\bk)$, takes the form of a Gaussian. This suggests considering perturbed Gaussians, as discussed in the next proposition. Later, we show that the $R$-matrix in quantum group theory assumes this form, which justifies the notation used.
\begin{proposition}
  \label{pro:perturbedexptill2}
  Let $A \in \mathcal{A}_{2}(\bk )$.
Define
\begin{align*}
 A_{2}(\bk(\epsilon)) \ni R :=   \varphi(A)  \mathrm{exp}(\epsilon\mathcal{N}( f_{1}(x,p) + \epsilon f_{2}(x,p))).
\end{align*}
Then
\begin{align*}
  R \mathbf{x}_{j} R^{-1} &= \mathbf{x}_{i}A_{ij} + \epsilon\mathrm{Ad}_{\varphi(A)}   \mathcal{N}(\partial_{p_{j}} f_{1}(x,p)  + \epsilon\partial_{p_{j}}f_{2}(x,p)) \\
  &\quad + \frac1 2 \epsilon^{2}\mathrm{Ad}_{\varphi(A)}[\mathcal{N}(f_{1}(x,p)),\mathcal{N}(\partial_{p_{j}}f_{1}(x,p))] \mod \epsilon^{3}.
\end{align*}
\end{proposition}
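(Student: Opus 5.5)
Write $F := \mathcal{N}(f_1(x,p) + \epsilon f_2(x,p))$, so that $R = \varphi(A)\exp(\epsilon F)$. Then
\[
R\mathbf{x}_j R^{-1} = \mathrm{Ad}_{\varphi(A)}\bigl(\exp(\epsilon F)\,\mathbf{x}_j\,\exp(-\epsilon F)\bigr).
\]
The plan has three steps: first expand the inner conjugation in $\epsilon$, then evaluate the resulting commutators with $\mathbf{x}_j$ in terms of $\partial_{p_j}$, and finally apply $\mathrm{Ad}_{\varphi(A)}$ using Proposition~\ref{pro:phixandp}.

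\textbf{Step 1: expand in $\epsilon$.} Use the standard identity $e^{\epsilon F} Y e^{-\epsilon F} = Y + \epsilon[F,Y] + \tfrac12 \epsilon^2 [F,[F,Y]] + O(\epsilon^3)$, which is valid in the formal power series setting over $\bk\llbracket h\rrbracket$ extended by $\epsilon$. Substituting $F$ and truncating mod $\epsilon^3$ gives
\[
\mathbf{x}_j + \epsilon[\mathcal{N}(f_1),\mathbf{x}_j] + \epsilon^2[\mathcal{N}(f_2),\mathbf{x}_j] + \tfrac12\epsilon^2[\mathcal{N}(f_1),[\mathcal{N}(f_1),\mathbf{x}_j]].
\]

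\textbf{Step 2: the key identity.} The central lemma is
\[
[\mathcal{N}(f),\mathbf{x}_j] = \mathcal{N}(\partial_{p_j} f)
\]
for any $f\in \bk[x_I,p_I]$, extended $h$-adically continuously. By linearity it suffices to check it on a normally ordered monomial $\mathbf{x}^\alpha\mathbf{p}^\beta$. Since $\mathbf{x}_j$ commutes with every $\mathbf{x}_i$, we get $[\mathbf{x}^\alpha\mathbf{p}^\beta,\mathbf{x}_j] = \mathbf{x}^\alpha[\mathbf{p}^\beta,\mathbf{x}_j]$. The relation $[\mathbf{p}_i,\mathbf{x}_j]=\delta_{ij}$ makes $[\,\cdot\,,\mathbf{x}_j]$ act on polynomials in the commuting $\mathbf{p}$'s as the derivation $\partial/\partial \mathbf{p}_j$. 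Hence the result is $\beta_j\,\mathbf{x}^\alpha \mathbf{p}^{\beta-e_j}$, which is already normally ordered and equals $\mathcal{N}(\partial_{p_j}x^\alpha p^\beta)$.

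Applying the identity term by term:
\begin{itemize}
\item the $\epsilon$ term becomes $\mathcal{N}(\partial_{p_j}f_1)$;
\item the first $\epsilon^2$ term becomes $\mathcal{N}(\partial_{p_j}f_2)$;
\item the double commutator becomes $[\mathcal{N}(f_1),\mathcal{N}(\partial_{p_j}f_1)]$.
\end{itemize}
The double commutator is left unsimplified, exactly as in the statement.

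\textbf{Step 3: conjugate by $\varphi(A)$.} Apply $\mathrm{Ad}_{\varphi(A)}$, which is linear and commutes with the $\epsilon$-expansion. For the zeroth-order term, Proposition~\ref{pro:phixandp}(i) gives $\varphi(A)\mathbf{x}_j\varphi(A)^{-1} = \mathbf{x}_iA_{ij}$. The remaining terms are kept as $\mathrm{Ad}_{\varphi(A)}$ applied to the expressions from Step 2, which yields the claimed formula.

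The only step requiring genuine care is the derivation identity in Step 2. Everything else is formal bookkeeping: the truncated BCH/adjoint expansion and convergence in the $(h,\epsilon)$-adic topology.
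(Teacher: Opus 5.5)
Your proposal is correct and follows essentially the same route as the paper: write $R\mathbf{x}_jR^{-1}=\mathrm{Ad}_{\varphi(A)}\bigl(e^{\epsilon F}\mathbf{x}_je^{-\epsilon F}\bigr)$, expand the inner conjugation to second order in $\epsilon$ using $[\mathcal{N}(g),\mathbf{x}_j]=\mathcal{N}(\partial_{p_j}g)$, and handle the zeroth-order term with Proposition~\ref{pro:phixandp}. The only difference is that you verify the derivation identity on normally ordered monomials $\mathbf{x}^\alpha\mathbf{p}^\beta$, which the paper simply asserts.
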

\begin{proof}
  Let $f(x,p,\epsilon) = f_{1}(x,p) + \epsilon f_{2}(x,p)$. We have
\begin{align*}
  R \mathbf{x}_{j} R^{-1} &= \mathrm{Ad}_{\varphi(A)}   \mathrm{exp}(\mathcal{N}(\epsilon f(x,p, \epsilon))) \mathbf{x}_{j}  \mathrm{exp}(\mathcal{N}(-\epsilon f(x,p, \epsilon)))  \\
                          &= \mathrm{Ad}_{\varphi(A)} e^{[\mathcal{N}(\epsilon f(x,p, \epsilon)), \cdot]}  \mathbf{x}_{j}.
\end{align*}
Note that for any polynomial $g(x,p)$, we have $[\mathcal{N}(g(x,p)),\mathbf{x}_{j}] = \mathcal{N}(\partial_{p_{j}} g(x,p))$. Expanding the exponential up to second order in $\epsilon$ yields the desired expression.
\end{proof}

\section{Cluster Algebras}
\label{sec:clusteralgebras}
\noindent
In this section, we review cluster algebras in the classical sense. We show that for a specific choice of cluster variables, we can construct a braiding operator that yields the Alexander polynomial.
  \subsection{Definitions}
  We recall the definition of cluster algebras based on \cite{Schrader2019}.
  A \textit{seed} is a triple $(I,I_0,\varepsilon)$ consisting of: a finite set denoted $I$, a subset $I_0 \subset I$, and a skew-symmetric matrix $\varepsilon = (\varepsilon_{ij})_{i,j \in I}$ with values in $\frac{1}{2}\mathbb{Z}$, where $\varepsilon_{ij}$ is an integer unless $i,j \in I_0$. The matrix $\varepsilon$ is called the \textit{exchange matrix}. To any seed $\Sigma$ two algebraic tori are associated: one called the \textit{cluster $\mathcal{A}$-torus} $\mathcal{A} = (\mathbb{C}^{\times})^{|I|}$ with coordinates $\{A_1, A_2, \ldots, A_{|I|}\}$, and the other known as the \textit{cluster $\mathcal{X}$-torus} $\mathcal{X} = (\mathbb{C}^{\times})^{|I|}$ with coordinates $\{X_1, X_2, \ldots, X_{|I|}\}$. The coordinates $A_i$ and $X_i$ are termed \textit{cluster $\mathcal{A}$-variables} and \textit{cluster $\mathcal{X}$-variables}, respectively. Variables are said to be \textit{frozen} when $i \in I_0$. The collection $\{A_i\}_{i\in I}$ (respectively $\{X_i\}_{i\in I}$) associated to a given seed is called a \textit{cluster}. A chosen seed from which the mutation process begins is called the \textit{initial seed}.

Let $M$ be the $|I| \times |I|$ matrix with entries $M_{ij} = 0$ unless both $i$ and $j$ are frozen, and such that the matrix $\widetilde{\varepsilon} = \varepsilon + M$ has integer coefficients. The regular map $p_{\Sigma}^M \colon \mathcal{A}_{\Sigma} \to \mathcal{X}_{\Sigma}$, known as the \textit{cluster ensemble map}, is defined by the formula:
\[ \left(p_{\Sigma}^M\right)^* X_k = \prod_{i \in I} A_i^{\widetilde{\varepsilon}_{ki}}. \]

Given a pair of seeds $\Sigma = (I, I_0, \varepsilon)$, $\Sigma' = (I',I_0',\varepsilon')$, together with an element $k \in I \setminus I_0$, a bijection $\mu_k \colon I \to I'$ is said to be a \textit{mutation in direction $k$}, if $\mu_k(I_0) = I_0'$ and
\[
  \varepsilon_{\mu_{k}(i), \mu_{k}(j)}^{\prime}=\left\{\begin{array}{ll}-\varepsilon_{i j} & \text { if } i=k \text { or } j=k, \\ \varepsilon_{i j} & \text { if } \varepsilon_{i k} \varepsilon_{k j} \leqslant 0, \\ \varepsilon_{i j}+\left|\varepsilon_{i k}\right| \varepsilon_{k j} & \text { if } \varepsilon_{i k} \varepsilon_{k j}>0.\end{array}\right.
  \]

Rather than solely relying on combinatorial information, the seed is represented graphically using a quiver, whose vertices are labeled with elements of the set $I$, and the number of arrows from $i$ to $j$ is $[\varepsilon_{ij}]_+= \mathrm{max} \{0, \varepsilon_{ij} \}$. If an entry is half-integer valued, then a dashed arrow is drawn. Vertices related to $I_0$ are displayed by squares instead of circles. 

\begin{example}[\cite{shen2022clusternaturequantumgroups}]
  Let $I= \{a,b,c,d\}$ and $I_0 = \{c,d\}$ and
  \[
\varepsilon=\left(\begin{array}{cccc}0 & -1 & 0 & 0 \\ 1 & 0 & -1 & 1 \\ 0 & 1 & 0 & -\frac{1}{2} \\ 0 & -1 & \frac{1}{2} & 0\end{array}\right).
    \]
    The corresponding quiver is shown in the left graph of Figure \ref{fig:mutationexample}.
The dashed arrow between the fixed vertices $c$ and $d$ signifies that $\varepsilon_{dc} = \frac1 2$. The image below shows the result of a quiver mutation in the direction $b$.
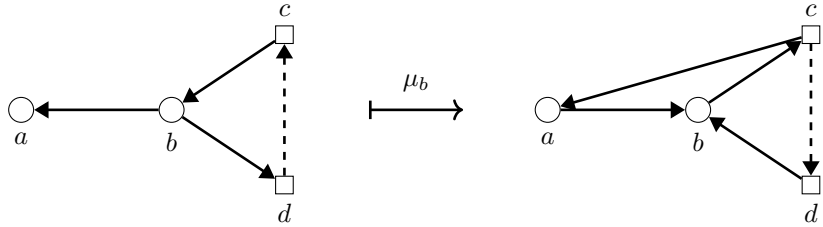
\begin{figure}[H]
  \centering
\begin{tikzpicture}
  \node[circle, draw, fill=white] (a) at (0,0) {};
  \node[below] at (a.south) {$a$};
  \node[circle, draw, fill=white] (b) at (2,0) {};
  \node[below] at (b.south) {$b$};
  \node[rectangle, draw, fill=white] (c) at (3.5,1) {};
  \node[above] at (c.north) {$c$};
  \node[rectangle, draw, fill=white] (d) at (3.5,-1) {};
  \node[below] at (d.south) {$d$};
  \node[] (e) at (4.5,0) {};
  \node[] (f) at (6,0) {};
  \node[] (g) at (5.25,0) {};
  \node[above] at (g.north) {$\mu_b$};
  \node[circle, draw, fill=white] (a1) at (7,0) {};
  \node[below] at (a1.south) {$a$};
  \node[circle, draw, fill=white] (b1) at (9,0) {};
  \node[below] at (b1.south) {$b$};
  \node[rectangle, draw, fill=white] (c1) at (10.5,1) {};
  \node[above] at (c1.north) {$c$};
  \node[rectangle, draw, fill=white] (d1) at (10.5,-1) {};
  \node[below] at (d1.south) {$d$};
  \draw[-{Triangle}, line width=1pt] (b) -- (a);
  \draw[-{Triangle}, line width=1pt] (b) -- (d);
  \draw[-{Triangle}, line width=1pt, dashed] (d) -- (c);
  \draw[-{Triangle}, line width=1pt] (c) -- (b);
  \draw[|->, line width=1pt] (e) -- (f);
  \draw[{Triangle}-, line width=1pt] (b1) -- (a1);
  \draw[{Triangle}-, line width=1pt] (b1) -- (d1);
  \draw[{Triangle}-, line width=1pt, dashed] (d1) -- (c1);
  \draw[{Triangle}-, line width=1pt] (c1) -- (b1);
  \draw[-{Triangle}, line width=1pt] (c1) -- (a1);
\end{tikzpicture}
\caption{A quiver mutation in the direction $b$.}
\label{fig:mutationexample}
\end{figure}
\end{example}

To a seed mutation $\mu_k$, we associate a pair of birational isomorphisms of cluster tori $\mu_k^{\mathcal{A}} \colon \mathcal{A}_{\Sigma} \to \mathcal{A}_{\Sigma'}$ and $\mu_k^{\mathcal{X}} \colon \mathcal{X}_{\Sigma} \to \mathcal{X}_{\Sigma'}$ defined as follows
\begin{equation}
  \label{eq:clasmut}
  \begin{array}{l}\left(\mu_{k}^{\mathcal{A}}\right)^{*} A_{\mu_{k}(i)}=\left\{\begin{array}{ll}A_{k}^{-1}\left(\prod_{i \mid \varepsilon_{k i}>0} A_{i}^{\varepsilon_{k i}}+\prod_{i \mid \varepsilon_{k i}<0} A_{i}^{-\varepsilon_{k i}}\right) & \text { if } i=k, \\ A_{i} & \text { if } i \neq k,\end{array}\right. \\
\text{and} \\
    \left(\mu_{k}^{\mathcal{X}}\right)^{*} X_{\mu_{k}(i)}=\left\{\begin{array}{ll}X_{k}^{-1} & \text { if } i=k, \\ X_{i}\left(1+X_{k}^{-\operatorname{sgn}\left(\varepsilon_{k i}\right)}\right)^{-\varepsilon_{k i}} & \text { if } i \neq k.\end{array}\right.\end{array}
  \end{equation}
  These maps are called \textit{cluster $\mathcal{A}$-} and \textit{cluster $\mathcal{X}$-mutations}, respectively. The cluster ensemble morphism $p_{\Sigma}^M$ intertwines these two types of mutations with the equation
\[ \mu_{k}^{\mathcal{X}} \circ p_{\Sigma}^{M} = p_{\mu_k(\Sigma)}^{M} \circ \mu_{k}^{\mathcal{A}}. \]
   \begin{definition}
     Let $\Sigma$ be a seed. The entries of the clusters of all seeds reachable from $\Sigma$ by a sequence of mutations are called the \textit{cluster variables}.

     The \textit{cluster algebra} associated to the seed $\Sigma$ is the subring of the fraction field of $\mathcal{O}(\mathcal{A}_{\Sigma})$ generated by cluster variables from all seed mutations equivalent to $\Sigma$.
\end{definition}
A \textit{permutation} of a seed $\sigma \colon I \to I'$ is a bijection $\sigma \colon I \to I'$ such that
\begin{align*}
  \sigma(I_0) &= I'_0, \\
  \varepsilon_{ij}' &= \varepsilon_{\sigma(i) \sigma(j)}.
\end{align*}
As with seed mutations, this map induces an isomorphism of cluster tori
\[
  (\sigma^{\mathcal{A}})^{*} \colon \mathcal{A}_{\Sigma} \to \mathcal{A}_{\Sigma'} \qquad \text{and} \qquad (\sigma^{\mathcal{X}})^{*} \colon \mathcal{X}_{\Sigma} \to \mathcal{X}_{\Sigma'}
\]
by relabeling the coordinates. We write $\sigma_{i,j}$ for the permutation that swaps the subscript $i$ and $j$, e.g.
\[
  \sigma_{i,j}(\dots,x_i,\dots,x_j,\dots) = (\dots,x_j,\dots,x_i,\dots).
  \]
A \textit{Mathematica} implementation of these formulas can be found in Appendix~\ref{sec:mathematica}.

\subsection{Braiding Operator}
\label{sec:braidingoperator}
The $R$-matrix is fundamental in knot theory because it allows the construction of knot invariants. It can be derived from the Drinfeld double of the Borel subgroup of a semisimple Lie algebra. It also arises within the theory of cluster algebras, as discussed in \cite{Fomin2008,hikami2014braiding}. Geometrically, braid group generators correspond to half-Dehn twists on a punctured disk. 

A half-Dehn twist on this surface can be understood via its action on an ideal triangulation of a punctured disk, through a finite sequence of “flips”. A flip removes a common edge of two adjacent triangles and adds a new diagonal edge of the quadrilateral. After a finite number of such moves, the resulting triangulation agrees with the one obtained by applying the half-Dehn twist to the surface. 

Consider, for example, the triangulation of the 2-punctured disk as illustrated in Figure \ref{fig:triangulation}.
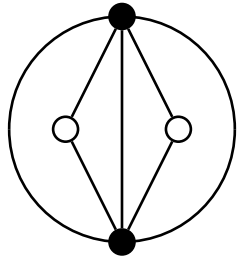
\begin{figure}[H]
  \centering
\begin{tikzpicture}[scale=0.75, every path/.style={line width=1pt}]
  \node[circle, draw, fill=black] (0) at (0, 2) {};
  \node[circle, draw, fill=black] (1) at (0, -2) {};
  \node (4) at (-2, 0) {};
  \node (5) at (2, 0) {};
  \node[circle, draw, fill=white] (2) at (-1, 0) {};
  \node[circle, draw, fill=white] (3) at (1, 0) {};
  \draw (0) to (1);
  \draw (1) to (2);
  \draw (2) to (0);
  \draw (0) to (3);
  \draw (3) to (1);
  \draw [in=180, out=-90] (4.center) to (1.center);
  \draw [in=-180, out=90] (4.center) to (0.center);
  \draw [in=90, out=0] (0.center) to (5.center);
  \draw [in=0, out=-90] (5.center) to (1.center);
\end{tikzpicture}
\caption{A triangulation of a 2-punctured disk}
  \label{fig:triangulation}
\end{figure}
The sequence of flips that give rise to the half Dehn twist on the punctured surface has been illustrated in Figure \ref{fig:dehn}
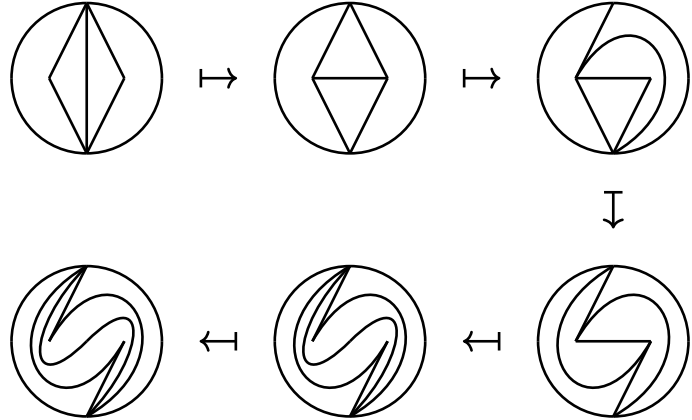
\begin{figure}[H]
  \centering
\begin{tikzpicture}[scale=0.5, every path/.style={line width=1pt}]
  \node (0) at (0, 2) {};
  \node (1) at (0, -2) {};
  \node (2) at (-1, 0) {};
  \node (3) at (1, 0) {};
  \node (4) at (-2, 0) {};
  \node (5) at (2, 0) {};
  \node (6) at (7, 2) {};
  \node (7) at (7, -2) {};
  \node (8) at (6, 0) {};
  \node (9) at (8, 0) {};
  \node (10) at (5, 0) {};
  \node (11) at (9, 0) {};
  \node (12) at (8, 0) {};
  \node (13) at (14, 2) {};
  \node (14) at (14, -2) {};
  \node (15) at (13, 0) {};
  \node (16) at (15, 0) {};
  \node (17) at (12, 0) {};
  \node (18) at (16, 0) {};
  \node (19) at (15, 0) {};
  \node (20) at (6.25, 1.5) {};
  \node (21) at (3, 0) {};
  \node (22) at (4, 0) {};
  \node (23) at (10, 0) {};
  \node (24) at (11, 0) {};
  \node (25) at (15, 0.75) {};
  \node (39) at (14, -5) {};
  \node (40) at (14, -9) {};
  \node (41) at (13, -7) {};
  \node (42) at (15, -7) {};
  \node (43) at (12, -7) {};
  \node (44) at (16, -7) {};
  \node (45) at (15, -7) {};
  \node (47) at (3, -7) {};
  \node (48) at (4, -7) {};
  \node (49) at (10, -7) {};
  \node (50) at (11, -7) {};
  \node (51) at (15, -6.25) {};
  \node (52) at (14, -3) {};
  \node (53) at (14, -4) {};
  \node (54) at (7, -5) {};
  \node (55) at (7, -9) {};
  \node (56) at (6, -7) {};
  \node (57) at (8, -7) {};
  \node (58) at (5, -7) {};
  \node (59) at (9, -7) {};
  \node (60) at (8, -7) {};
  \node (61) at (8, -6.25) {};
  \node (62) at (7, -7) {};
  \node (63) at (0, -5) {};
  \node (64) at (0, -9) {};
  \node (65) at (-1, -7) {};
  \node (66) at (1, -7) {};
  \node (67) at (-2, -7) {};
  \node (68) at (2, -7) {};
  \node (69) at (1, -7) {};
  \node (70) at (1, -6.25) {};
  \node (71) at (0, -7) {};
  \draw (0.center) to (1.center);
  \draw (1.center) to (2.center);
  \draw (2.center) to (0.center);
  \draw (0.center) to (3.center);
  \draw (3.center) to (1.center);
  \draw [in=180, out=-90] (4.center) to (1.center);
  \draw [in=-180, out=90] (4.center) to (0.center);
  \draw [in=90, out=0] (0.center) to (5.center);
  \draw [in=0, out=-90] (5.center) to (1.center);
  \draw (7.center) to (8.center);
  \draw (8.center) to (6.center);
  \draw (6.center) to (9.center);
  \draw (9.center) to (7.center);
  \draw [in=180, out=-90] (10.center) to (7.center);
  \draw [in=-180, out=90] (10.center) to (6.center);
  \draw [in=90, out=0] (6.center) to (11.center);
  \draw [in=0, out=-90] (11.center) to (7.center);
  \draw (8.center) to (12.center);
  \draw (14.center) to (15.center);
  \draw (15.center) to (13.center);
  \draw (16.center) to (14.center);
  \draw [in=180, out=-90] (17.center) to (14.center);
  \draw [in=-180, out=90] (17.center) to (13.center);
  \draw [in=90, out=0] (13.center) to (18.center);
  \draw [in=0, out=-90] (18.center) to (14.center);
  \draw (15.center) to (19.center);
    \draw[|->, line width=1pt] (21.center) -- (22.center);
    \draw[|->, line width=1pt] (23.center) -- (24.center);
  \draw [in=30, out=60, looseness=3.75] (15.center) to (14.center);
  \draw (41.center) to (39.center);
  \draw (42.center) to (40.center);
  \draw [in=180, out=-90] (43.center) to (40.center);
  \draw [in=-180, out=90] (43.center) to (39.center);
  \draw [in=90, out=0] (39.center) to (44.center);
  \draw [in=0, out=-90] (44.center) to (40.center);
  \draw (41.center) to (45.center);
    \draw[<-|, line width=1pt] (47.center) -- (48.center);
    \draw[<-|, line width=1pt] (49.center) -- (50.center);
  \draw [in=30, out=60, looseness=4.00] (41.center) to (40.center);
    \draw[|->, line width=1pt] (52.center) -- (53.center);
  \draw [in=-120, out=-150, looseness=4.00] (39.center) to (45.center);
  \draw (56.center) to (54.center);
  \draw (57.center) to (55.center);
  \draw [in=180, out=-90] (58.center) to (55.center);
  \draw [in=-180, out=90] (58.center) to (54.center);
  \draw [in=90, out=0] (54.center) to (59.center);
  \draw [in=0, out=-90] (59.center) to (55.center);
  \draw [in=30, out=60, looseness=4.00] (56.center) to (55.center);
  \draw [in=-120, out=-150, looseness=4.00] (54.center) to (60.center);
  \draw [in=45, out=45, looseness=3.00] (62.center) to (55.center);
  \draw [in=-135, out=-135, looseness=3.00] (54.center) to (62.center);
  \draw (65.center) to (63.center);
  \draw (66.center) to (64.center);
  \draw [in=180, out=-90] (67.center) to (64.center);
  \draw [in=-180, out=90] (67.center) to (63.center);
  \draw [in=90, out=0] (63.center) to (68.center);
  \draw [in=0, out=-90] (68.center) to (64.center);
  \draw [in=30, out=60, looseness=4.00] (65.center) to (64.center);
  \draw [in=-120, out=-150, looseness=4.00] (63.center) to (69.center);
  \draw [in=45, out=45, looseness=3.00] (71.center) to (64.center);
  \draw [in=-135, out=-135, looseness=3.00] (63.center) to (71.center);
\end{tikzpicture}
\caption{Half Dehn-twist for the 2-punctured disk.}
\label{fig:dehn}
\end{figure}
The sequence of flips that give rise to the half-Dehn twist has an interpretation in the theory of cluster algebras as explained in \cite{Fomin2008}. We associate to each ideal triangulation $T$ of a punctured surface a quiver $Q_T$ whose vertices correspond to the arcs of $T$. Moreover, we assign a \textit{cluster variable} to each vertex. For each triangle of $T$, we draw an arrow $i \to j$ in $Q_T$ whenever $i$ and $j$ are sides of that triangle and $j$ follows $i$ in clockwise order. The surface orientation makes this unambiguous. Finally, any $2$-cycles in the quiver are canceled. For example, consider the two-punctured disk in Figure \ref{fig:triangulation}. Following the prescription above, we build the quiver:

\[
\begin{tikzpicture}
  \node[rectangle, draw, fill=white] (a) at (0,0) {};
  \node[below] at (a.south) {$1$};
  \node[circle, draw, fill=white] (b) at (1,-1) {};
  \node[below] at (b.south) {$2$};
  \node[circle, draw, fill=white] (c) at (2,0) {};
  \node[below] at (c.south) {$4$};
  \node[circle, draw, fill=white] (d) at (1,1) {};
  \node[below] at (d.south) {$3$};

  \node[circle, draw, fill=white] (5) at (3,-1) {};
  \node[below] at (5.south) {$5$};
  \node[circle, draw, fill=white] (6) at (3,1) {};
  \node[below] at (6.south) {$6$};
  \node[rectangle, draw, fill=white] (7) at (4,0) {};
  \node[below] at (7.south) {$7$};
  \draw[-{Triangle}, line width=1pt] (a) -- (b);
  \draw[-{Triangle}, line width=1pt] (b) -- (c);
  \draw[-{Triangle}, line width=1pt] (c) -- (d);
  \draw[-{Triangle}, line width=1pt] (d) -- (a);
  \draw[-{Triangle}, line width=1pt] (c) -- (5);
  \draw[-{Triangle}, line width=1pt] (5) -- (7);
  \draw[-{Triangle}, line width=1pt] (7) -- (6);
  \draw[-{Triangle}, line width=1pt] (6) -- (c);
\end{tikzpicture}
\]
The vertices are labeled by $I=\{1,2,3,4,5,6,7\}$, and we set $I_0 = \{1,7\}$ in accordance with the definition of cluster algebras. 

We define the $R$-operator acting on any of the cluster $\mathcal{A}$- or $\mathcal{X}$-variables as
\begin{equation}
\label{eq:roperator}
R = \sigma_{3,5} \circ \sigma_{2,5}  \circ \sigma_{3,6} \circ  \mu_4 \circ  \mu_2 \circ  \mu_6 \circ  \mu_{4}.
\end{equation}
We omitted the superscripts on the (per)mutations to improve readability. This series of (per)mutations results from performing a half-Dehn twist on the punctured disk, where the two punctures are interchanged in a counterclockwise fashion, consistent with Figure \ref{fig:dehn}

In terms of the cluster $\mathcal{A}$-variables, we have
\begin{align*}
  R^{\mathcal{A}} \bigg(A_{1}, A_{2}, \ldots, A_{7}\bigg)=\bigg(A_{1}, A_{5}, \frac{A_{1} A_{3} A_{5}+A_{3} A_{4} A_{5}+A_{1} A_{2} A_{6}}{A_{2} A_{4}}, \\ \quad \frac{A_{1} A_{3} A_{4} A_{5}+A_{3} A_{4}^{2} A_{5}+A_{1} A_{3} A_{5} A_{7}+A_{3} A_{4} A_{5} A_{7}+A_{1} A_{2} A_{6} A_{7}}{A_{2} A_{4} A_{6}} \\ \quad \frac{A_{3} A_{4} A_{5}+A_{3} A_{5} A_{7}+A_{2} A_{6} A_{7}}{A_{4} A_{6}}, A_{3}, A_{7}\bigg),
\end{align*}
while in terms of the cluster $\mathcal{X}$-variables;
\begin{align*}
  R^{\mathcal{X}}&\left(X_{1}, X_{2}, \ldots, X_{7}\right)=\bigg(X_{1}(1+X_{2}+X_{2} X_{4}), \frac{X_{2} X_{4} X_{5} X_{6}}{1+X_{2}+X_{6}+X_{2} X_{6}+X_{2} X_{4} X_{6}}, \\  &\frac{1+X_{2}+X_{6}+X_{2} X_{6}+X_{2} X_{4} X_{6}}{X_{2} X_{4}}, \frac{X_{4}}{(1+X_{2}+X_{2} X_{4})(1+X_{6}+X_{4} X_{6})}, \\ & \frac{1+X_{2}+X_{6}+X_{2} X_{6}+X_{2} X_{4} X_{6}}{X_{4} X_{6}}, \frac{X_{2} X_{3} X_{4} X_{6}}{1+X_{2}+X_{6}+X_{2} X_{6}+X_{2} X_{4} X_{6}}, \\
  &(1+X_{6}+X_{4} X_{6}) X_{7}\bigg).
\end{align*}
The last expression has been verified in \textit{Mathematica}, as described in Appendix \ref{sec:mathematica}.

The discussion above focuses on the two-punctured disk, but to compute knot invariants, we must also consider the multi-punctured disk. This lets us generate the braid-group operators. To achieve this, we consider the following quiver:
\[
\begin{tikzpicture}[scale=0.9]
  \node[rectangle, draw, fill=white] (0) at (-5, 0) {};
  \node[below] at (0.south) {$1$};
  \node[circle, draw, fill=white] (1) at (-4, -1) {};
  \node[below] at (1.south) {$2$};
  \node[circle, draw, fill=white] (2) at (-3, 0) {};
  \node[below] at (2.south) {$4$};
  \node[circle, draw, fill=white] (3) at (-4, 1) {};
  \node[below] at (3.south) {$3$};
  \node[circle, draw, fill=white] (4) at (-2, -1) {};
  \node[below] at (4.south) {$5$};
  \node[circle, draw, fill=white] (5) at (-1, 0) {};
  \node[below] at (5.south) {$7$};
  \node[circle, draw, fill=white] (6) at (-2, 1) {};
  \node[below] at (6.south) {$6$};
  \node (7) at (-0.5, 0.5) {};
  \node (8) at (-0.5, -0.5) {};
  \node (9) at (0.5, 0.5) {};
  \node (10) at (0.5, -0.5) {};
  \node[circle, draw, fill=white] (11) at (1, 0) {};
  \node[below,yshift=-0.2cm] at (11.south) {$3i-2$};
  \node[circle, draw, fill=white] (12) at (2, 1) {};
  \node[below] at (12.south) {$3i$};
  \node[circle, draw, fill=white] (13) at (3, 0) {};
  \node[below,yshift=-0.2cm] at (13.south) {$3i+1$};
  \node[circle, draw, fill=white] (14) at (2, -1) {};
  \node[below] at (14.south) {$3i-1$};
  \node (15) at (3.5, 0.5) {};
  \node (16) at (3.5, -0.5) {};
  \node (17) at (4.5, 0.5) {};
  \node (18) at (4.5, -0.5) {};
  \node[circle, draw, fill=white] (19) at (5, 0) {};
  \node[below,yshift=-0.2cm] at (19.south) {$3n-2$};
  \node[circle, draw, fill=white] (20) at (6, 1) {};
  \node[below] at (20.south) {$3n$};
  \node[rectangle, draw, fill=white] (21) at (7, 0) {};
  \node[below,yshift=-0.25cm] at (21.south) {$3n+1$};
  \node[circle, draw, fill=white] (22) at (6, -1) {};
  \node[below] at (22.south) {$3n-1$};
  \node (23) at (-0.5, 0) {};
  \node (24) at (0.5, 0) {};
  \node (25) at (3.5, 0) {};
  \node (26) at (4.5, 0) {};
  \draw[-{Triangle}, line width=1pt] (0) -- (1);
  \draw[-{Triangle}, line width=1pt] (1) -- (2);
  \draw[-{Triangle}, line width=1pt] (2) -- (3);
  \draw[-{Triangle}, line width=1pt] (3) -- (0);
  \draw[-{Triangle}, line width=1pt] (2) -- (4);
  \draw[-{Triangle}, line width=1pt] (4) -- (5);
  \draw[-{Triangle}, line width=1pt] (6) -- (2);
  \draw[-{Triangle}, line width=1pt] (6) -- (5);
  \draw[line width=1pt] (5) -- (7);
  \draw[line width=1pt] (5) -- (8);
  \draw[line width=1pt] (9) -- (11);
  \draw[line width=1pt] (10) -- (11);
  \draw[-{Triangle}, line width=1pt] (11) -- (14);
  \draw[-{Triangle}, line width=1pt] (14) -- (13);
  \draw[-{Triangle}, line width=1pt] (13) -- (12);
  \draw[-{Triangle}, line width=1pt] (12) -- (11);
  \draw[line width=1pt] (13) -- (16);
  \draw[line width=1pt]  (15) -- (13);
  \draw[line width=1pt] (17) -- (19);
  \draw[line width=1pt] (19) -- (18);
  \draw[-{Triangle}, line width=1pt] (19) -- (22);
  \draw[-{Triangle}, line width=1pt] (22) -- (21);
  \draw[-{Triangle}, line width=1pt] (21) -- (20);
  \draw[-{Triangle}, line width=1pt] (20) -- (19);
  \draw[dotted] (23) -- (24);
  \draw[dotted] (25) -- (26);
\end{tikzpicture}
  \]
  This quiver results from the triangulation of the $n$-punctured disk. For any $i=1,\dots, n-1$, there exists an $R$-operator acting on the associated cluster seed, given by
  \begin{equation}
  \label{eq:Rgen}
    \stackrel{i}{\mathrm{R}}=\sigma_{3 i, 3 i+2} \sigma_{3 i-1,3 i+2} \sigma_{3 i, 3 i+3} \mu_{3 i+1} \mu_{3 i-1} \mu_{3 i+3} \mu_{3 i+1}.
  \end{equation}
    This generalizes \eqref{eq:roperator}.
\subsection{The Alexander Polynomial arising from Cluster Algebras}
\label{sec:alexcluster}
In \cite{hikami2015braids}, Hikami provided a method for computing knot invariants from cluster algebras using the braid-group presentation of a knot. Let $\mathcal K$ be a knot. By Alexander's Theorem, $\mathcal K$ can be realized as the closure of a braid $\beta \in \mathcal B_n$, where
\[
\mathcal{B}_{n}
=
\left\langle
\sigma_{1}, \sigma_{2}, \ldots, \sigma_{n-1}
\middle|
\begin{array}{l}
\sigma_{i} \sigma_{j}=\sigma_{j} \sigma_{i}
\quad \text{for } |i-j|>1, \\[4pt]
\sigma_{i} \sigma_{i+1} \sigma_{i}
=
\sigma_{i+1} \sigma_{i} \sigma_{i+1}
\quad \text{for } i=1,2,\ldots,n-2
\end{array}
\right\rangle .
\]
Fix a braid-word presentation
\[
\beta
=
\sigma_{k_1}^{b_1}
\sigma_{k_2}^{b_2}
\cdots
\sigma_{k_m}^{b_m},
\qquad
b_j \in \{\pm 1\}.
\]
Let $\mathcal F = \mathbb C(x_1,\dots,x_{3n+1})$. To each generator $\sigma_i$ of the braid group $\mathcal{B}_n$ there is an invertible birational transformation
\[
 \stackrel{i}{\mathrm{R}} \ \in \mathrm{BirAut}(\mathcal F),
\]
given by \eqref{eq:Rgen}, satisfying the braid relations
\[
 \stackrel{i}{\mathrm{R}} \stackrel{j}{\mathrm{R}} \ = \ \stackrel{j}{\mathrm{R}}  \stackrel{i}{\mathrm{R}}
\quad \text{for } |i-j|>1,
\qquad
 \stackrel{i}{\mathrm{R}}  \stackrel{i+1}{\mathrm{R}} \stackrel{i}{\mathrm{R}}
=
 \stackrel{i+1}{\mathrm{R}}  \stackrel{i}{\mathrm{R}} \stackrel{i+1}{\mathrm{R}}.
\]
Define the group homomorphism
\[
\mathcal C \colon \mathcal B_n \longrightarrow \mathrm{BirAut}(\mathcal F),
\text{ by }
\sigma_i \mapsto   \stackrel{i}{\mathrm{R}}.
\]
These identifications suggest the following definition.
\begin{definition}
  \label{def:clusterpattern}
For any initial seed
\[
\boldsymbol{x}[1]=(x[1]_1,\dots,x[1]_{3n+1}),
\]
the braid word $\beta$ induces a sequence
\[
\boldsymbol{x}[1]
\xrightarrow{(\stackrel{k_1}{\mathrm{R}})^{\varepsilon_1}}
\boldsymbol{x}[2]
\xrightarrow{(\stackrel{k_2}{\mathrm{R}})^{\varepsilon_2}}
\cdots
\xrightarrow{(\stackrel{k_m}{\mathrm{R}})^{\varepsilon_m}}
\boldsymbol{x}[m+1],
\]
where $\boldsymbol{x}[j+1]=\mathcal C(\sigma_{k_j}^{\varepsilon_j})(\boldsymbol{x}[j])$. We refer to this sequence as the \emph{cluster pattern associated to the braid presentation of $\mathcal K$}.
\end{definition}
Imposing the periodicity condition 
    \begin{equation}
    \label{eq:condition}
      \boldsymbol{x}[1]=\boldsymbol{x}[m+1],
    \end{equation}
 yields an algebraic condition associated with a braid presentation. It reflects the fact that the braid closes to a knot. Using a particular choice of cluster variables satisfying \eqref{eq:condition}, Hikami computed the complex volume of certain knots. We diverge from this approach, using a different choice of cluster variables, which leads to the Alexander polynomial. We illustrate this in the following example.

\begin{example}
  \label{ex:clustertrefoil}
Let $\mathcal{K}=3_1$ be the trefoil knot. Its braid-group presentation is $\sigma_1^{3}$, with corresponding cluster pattern
  \[
   \boldsymbol{x}[1] \xrightarrow{\substack{1 \\ R}}   \boldsymbol{x}[2] \xrightarrow{\substack{1 \\ R}}   \boldsymbol{x}[3] \xrightarrow{\substack{1 \\ R}}   \boldsymbol{x}[4].
    \]
    As the initial cluster variable, we choose
    \[
      \boldsymbol{x}[1]=(x_1 T^{-1}, -T^{2}, -1, -T^{-1} x_1^{-1}x_2, -T^{2}, -1, x_2^{-1}).
      \]
      Applying the $R$-operator three times yields
      \begin{align*}
     \boldsymbol{x}[4]=&\Bigg(
                         -\frac{(-1+T^{2})(1+T^{4})x_{1}}{T} + (1+T^{2}(-1+T^{2}))x_{2},-T^{2}, -1, \\
      &-\frac{(1-T^{2}+T^{4})x_{1} - T(-1+T^{2})x_{2}}{(-1 + T)(1+T)(1+T^{4})x_{1} + T(-1 + T^{2} - T^{4})x_{2}},-T^{-2},-1,\\
        &\frac{1}{T(1-T^{2} + T^{4})x_{1} - (-1 + T)T^{2}(1+T)x_{2}} \Bigg).
      \end{align*}
      Setting $\boldsymbol{x}[1] = \boldsymbol{x}[4]$ gives rise to the following equation
      \begin{align*}
&\begin{pmatrix}
  x_1 \\
  x_2
\end{pmatrix} =
\begin{pmatrix}
  1-T^{6} + T^{4} -T^{2} & T^{5}-T^{3} +T^{1} \\
  T^{5}-T^{3} + T^{1} & -T^{4} + T^{2}
\end{pmatrix}
\begin{pmatrix}
  x_1 \\
  x_2
\end{pmatrix} \\
&        \Rightarrow
\begin{pmatrix}
  0 \\
  0
\end{pmatrix} =
\begin{pmatrix}
  -T^{6} + T^{4} -T^{2} & T^{5}-T^{3} +T \\
  T^{5}-T^{3} + T & -T^{4} + T^{2}-1
\end{pmatrix}
\begin{pmatrix}
  x_1 \\
  x_2
\end{pmatrix}.
      \end{align*}
The Alexander polynomial corresponding to the trefoil up to a factor $\pm T^n$ for some $n \in \mathbb{Z}$ can now be read off from any entry of the matrix above. Indeed, one finds $\Delta_{\mathcal{K}}(T^2) \doteq T^{2} -1 + T^{-2}$. 
\end{example}
The fact that the method described above yields the Alexander polynomial is not coincidental, but a direct consequence of the choice of the initial cluster variables. In the forthcoming Section~\ref{sec:clusterrealization}, we provide the origin of this choice.

\section{Quantization}
\label{sec:quantization}

\noindent
In this section, we build on the structures introduced in the previous section. We review \textit{quantum cluster algebras}, originally introduced in \cite{BerensteinZelevinsky2005}. By considering representations of the quantum cluster algebra, we derive an $\epsilon$-expanded $R$-matrix. We can use it to construct perturbed knot invariants.

\subsection{Quantum Cluster Algebras}
\label{sec:qca}
Let $\Sigma=(I,I_0,\varepsilon)$ be a seed. The algebra of functions $\mathcal{O}(\mathcal{X}_{\Sigma})$ admits a quantization $\mathcal{X}_{\Sigma}^q:= \mathcal{O}_q(\mathcal{X}_{\Sigma})$, which is a $\mathbb{Z}[q^{\pm \frac1 2}]$-algebra defined by generators $X_i^{\pm 1}$ $(i \in I)$ satisfying
\[
  X_i X_j = q^{-2 \varepsilon_{ij}}X_j X_i,
  \]
  known as the \textit{quantum torus algebra}.
For any mutation $\mu_k$, we associate a \textit{quantum cluster mutation}, which is a homomorphism $\mu_k^q \colon \mathcal{X}_{\Sigma}^q \to \mathcal{X}_{\Sigma'}^q$, defined by
\[
  \mu_{k}^{q}\left(X_{i}\right)=\left\{\begin{array}{ll}X_{k}^{-1}, & \text { if } i=k, \\ X_{i} \prod_{r=1}^{\varepsilon_{k i}}\left(1+q^{2 r-1} X_{k}^{-1}\right)^{-1}, & \text { if } i \neq k \text { and } \varepsilon_{k i} \geqslant 0, \\ X_{i} \prod_{r=1}^{-\varepsilon_{k i}}\left(1+q^{2 r-1} X_{k}\right), & \text { if } i \neq k \text { and } \varepsilon_{k i} \leqslant 0 .\end{array}\right.
  \]
  It is readily shown that the new variables $X_i'$ satisfy the relation
  \[
    X_i' X_j' = q^{-2 \varepsilon_{ij}'}X_j'X_i'.
    \]

    Two seeds give rise to a new seed via a process called amalgamation, which we describe as follows.

\begin{definition}
  Let $Q_1$ and $Q_2$ be a pair of quivers associated to the seeds $\Sigma_1=(I^1,I_0^1,\varepsilon^1)$ and $\Sigma_2 = (I^2,I_0^2,\varepsilon^2)$. Moreover, let $J_1 \subset I_0^1$, $J_2 \subset I_0^2$ be subsets of frozen nodes of $Q_1$ and $Q_2$, respectively. Suppose there exists a bijection $\phi \colon J_1 \to J_2$. Then the \textit{amalgamation} of the quivers $Q_1$ and $Q_2$ along $\phi$ is a new quiver $Q$ that is constructed in the following two steps
\begin{enumerate}
\item\label{item:2} for any $i \in I^1$, the vertices $v_i \in Q_1$ and $v_{\phi(i)} \in Q_2$ are identified in the union $Q_1 \sqcup Q_2$;
        \item for any a pair $ i, j \in I^1 $ for which $ Q_1 $ contains an arrow $ v_i \to v_j $ with label $ \varepsilon_{ij} $, and $ Q_2 $ contains an arrow $ v_{\phi(i)} \to v_{\phi(j)} $ with label $ \varepsilon_{\phi(i),\phi(j)} $, draw a new arrow in $ Q $ between the corresponding vertices and assign the label $ \varepsilon_{ij} + \varepsilon_{\phi(i),\phi(j)} $ to it.
\end{enumerate}
\end{definition}

The amalgamation of two quivers, $ Q_1 $ and $ Q_2 $, into a single quiver $ Q $ gives rise to an embedding of the associated quantum cluster $\mathcal{X}$-tori, $\mathcal{X}_{\Sigma}^{q} \to \mathcal{X}_{\Sigma_{1}}^{q} \otimes \mathcal{X}_{\Sigma_{2}}^{q}$, defined as follows:
\begin{equation}
\label{eq:embedding}
X_i \mapsto
\begin{cases}
X_i \otimes 1, & \text{if } i \in I^1 \setminus I_0^1, \\
1 \otimes X_i, & \text{if } i \in I^2 \setminus I_0^2, \\
X_i \otimes X_{\phi(i)}, & \text{otherwise.}
\end{cases}
\end{equation}
\begin{example}
  \label{ex:amalg}
Consider the quivers
\[
  \mathcal{D}=
\vcenter{\hbox{\begin{tikzpicture}
  \node[rectangle, draw, fill=white] (a) at (0,0) {};
  \node[below] at (a.south) {$1'$};
  \node[circle, draw, fill=white] (b) at (1,-1) {};
  \node[below] at (b.south) {$2'$};
  \node[rectangle, draw, fill=white] (c) at (2,0) {};
  \node[below] at (c.south) {$4'$};
  \node[circle, draw, fill=white] (d) at (1,1) {};
  \node[below] at (d.south) {$3'$};

  \draw[-{Triangle}, line width=1pt] (a) -- (b);
  \draw[-{Triangle}, line width=1pt] (b) -- (c);
  \draw[-{Triangle}, line width=1pt] (c) -- (d);
  \draw[-{Triangle}, line width=1pt] (d) -- (a);
\end{tikzpicture}}}
\qquad
  \mathcal{E}=
\vcenter{\hbox{\begin{tikzpicture}
  \node[rectangle, draw, fill=white] (a) at (0,0) {};
  \node[below] at (a.south) {$5'$};
  \node[circle, draw, fill=white] (b) at (1,-1) {};
  \node[below] at (b.south) {$6'$};
  \node[rectangle, draw, fill=white] (c) at (2,0) {};
  \node[below] at (c.south) {$8'$};
  \node[circle, draw, fill=white] (d) at (1,1) {};
  \node[below] at (d.south) {$7'$};

  \draw[-{Triangle}, line width=1pt] (a) -- (b);
  \draw[-{Triangle}, line width=1pt] (b) -- (c);
  \draw[-{Triangle}, line width=1pt] (c) -- (d);
  \draw[-{Triangle}, line width=1pt] (d) -- (a);
\end{tikzpicture}}}
\]
The corresponding quantum torus algebras are also denoted $\mathcal{D}$ and $\mathcal{E}$, respectively. Define the map $\phi  \colon \{4\} \to \{5\}$ by $4 \mapsto 5$. Amalgamation along $\phi$ gives rise to the quiver
\[
\mathcal{Z}=\vcenter{\hbox{
\begin{tikzpicture}
  \node[rectangle, draw, fill=white] (a) at (0,0) {};
  \node[below] at (a.south) {$1$};
  \node[circle, draw, fill=white] (b) at (1,-1) {};
  \node[below] at (b.south) {$2$};
  \node[circle, draw, fill=white] (c) at (2,0) {};
  \node[below] at (c.south) {$4$};
  \node[circle, draw, fill=white] (d) at (1,1) {};
  \node[below] at (d.south) {$3$};

  \node[circle, draw, fill=white] (5) at (3,-1) {};
  \node[below] at (5.south) {$5$};
  \node[circle, draw, fill=white] (6) at (3,1) {};
  \node[below] at (6.south) {$6$};
  \node[rectangle, draw, fill=white] (7) at (4,0) {};
  \node[below] at (7.south) {$7$};
  \draw[-{Triangle}, line width=1pt] (a) -- (b);
  \draw[-{Triangle}, line width=1pt] (b) -- (c);
  \draw[-{Triangle}, line width=1pt] (c) -- (d);
  \draw[-{Triangle}, line width=1pt] (d) -- (a);
  \draw[-{Triangle}, line width=1pt] (c) -- (5);
  \draw[-{Triangle}, line width=1pt] (5) -- (7);
  \draw[-{Triangle}, line width=1pt] (7) -- (6);
  \draw[-{Triangle}, line width=1pt] (6) -- (c);
\end{tikzpicture}}}
\]
As described above, there exists an embedding
\[
\begin{aligned}
    X_1 &\mapsto X_{1'} \otimes 1, & X_2&\mapsto X_{2'} \otimes 1, \\
  X_3 &\mapsto  X_{3'} \otimes 1,    & X_4&\mapsto X_{4'} \otimes X_{5'}, \\
  X_5 &\mapsto 1 \otimes X_{6'}, & X_{6}&\mapsto 1 \otimes X_{7'}, \\
  X_7 &\mapsto 1 \otimes X_{8'}.
\end{aligned}
\]
\end{example}
In Appendix \ref{sec:mathematica}, a \textit{Mathematica} implementation has been provided.
\subsection{Representations of Quantum Cluster Algebras}

To construct the representation theory of quantum cluster algebras in line with \cite{goncharov2019quantum}, let us recall the Schr\"odinger representation. Let $b \in \mathbb{R}$, and $q = e^{ \pi i b^2}$. The representation described in \eqref{eq:schrodinger} can be used to build a representation of quantum cluster algebras.

Consider a quantum cluster algebra $\Sigma = (I,I_0,\varepsilon)$. The skew-symmetric matrix $\varepsilon$ induces a skew-symmetric form on a lattice $\Lambda$. This in turn induces a skew-symmetric form $\left\langle \cdot, \cdot \right\rangle$  on the vector space $\Lambda \otimes \mathbb{R}$. For now, assume the form $\left\langle \cdot, \cdot \right\rangle$ is nondegenerate. This turns it into a proper symplectic form. Choose a polarization $\Lambda \otimes \mathbb{R} = \ell \oplus \ell'$. Note that \eqref{eq:schrodinger} gives rise to a representation of the Heisenberg group on the Hilbert space $\mathcal{H}(\ell)$ with action
\begin{equation}
  \label{eq:schrodinger2}
  \begin{aligned}
 & e^{i b x}\cdot \vartheta(e^{ y}) = e^{- 2 \pi  b \omega(x,y)}\vartheta(e^{y}) &&\qquad x \in \ell, y \in \ell' \\
   &e^{i b y_0} \cdot \vartheta(e^{y}) = \vartheta(e^{y - i b y_0}) = e^{-i b y_0 \partial_y} \vartheta(e^y) &&\qquad y,y_0 \in \ell' \\
   &\exp(t z) = e^{2 \pi i t} \mathrm{Id}.
\end{aligned}
\end{equation}
This gives rise to a representation of the corresponding quantum torus algebra.

Now suppose that $\Lambda$ is a lattice with an arbitrary skew-symmetric form. As above, this gives rise to a skew-symmetric form $\left\langle \cdot, \cdot \right\rangle$ on $\Lambda \otimes \mathbb{R}$. Let $\Lambda_0 \otimes \mathbb{R} \subset \Lambda \otimes \mathbb{R}$ be the kernel of this form. To $\Lambda \otimes \mathbb{R} $ we associate the representation defined on $\Lambda/\Lambda_0 \otimes \mathbb{R}$ as introduced above, where $e^{b v}$ for $v \in \Lambda_0$ acts as a scalar. Returning then to the basis in which $\varepsilon$ is expressed, we obtain a representation of the quantum torus algebra related to $\Sigma$. We denote this representation by $\rho_{\Sigma,\ell}$. We denote the corresponding Hilbert space by $\mathcal{H}_{\Sigma, \ell}$.
\begin{example}[Cluster algebra of type $A_1$]
  \label{ex:sl2schrodinger}
Let $\mathcal{D}$ be the quantum cluster algebra related to the quiver
\[
  \mathcal{D}=
\vcenter{\hbox{\begin{tikzpicture}
  \node[rectangle, draw, fill=white] (a) at (0,0) {};
  \node[below] at (a.south) {$1$};
  \node[circle, draw, fill=white] (b) at (1,-1) {};
  \node[below] at (b.south) {$2$};
  \node[rectangle, draw, fill=white] (c) at (2,0) {};
  \node[below] at (c.south) {$4$};
  \node[circle, draw, fill=white] (d) at (1,1) {};
  \node[below] at (d.south) {$3$};

  \draw[-{Triangle}, line width=1pt] (a) -- (b);
  \draw[-{Triangle}, line width=1pt] (b) -- (c);
  \draw[-{Triangle}, line width=1pt] (c) -- (d);
  \draw[-{Triangle}, line width=1pt] (d) -- (a);
\end{tikzpicture}}}
\]
This is the cluster algebra of type $A_1$. To it, we associate the lattice $\Lambda = \mathbb{Z}^4$ with a skew-symmetric form
\[
\varepsilon =
\begin{pmatrix}
  0 & 1 & -1 & 0 \\
  -1 & 0 & 0 & 1 \\
  1 & 0 & 0 & -1 \\
  0 & -1 & 1 & 0
\end{pmatrix}.
\]
Note that $\mathbb{R}^4 \cong \Lambda \otimes \mathbb{R}$ as a real vector space. The matrix $\varepsilon$ has nullspace $K = \mathbb{R}(1,0,0,1) \oplus \mathbb{R} (0,1,1,0)$. We choose a new basis
\begin{align*}
u_1 &= \frac{1}{\sqrt{2}}(1,0,0,-1) = \frac{1}{\sqrt{2}}(e_1-e_4), \\
u_2 &= \frac{1}{\sqrt{2}}(0,1,-1,0) = \frac{1}{\sqrt{2}}(e_2 - e_3 ),\\
u_3 &= \frac{1}{\sqrt{2}}(1,0,0,1) = \frac{1}{\sqrt{2}}(e_1+e_4 ),\\
u_4 &= \frac{1}{\sqrt{2}}(0,1,1,0) = \frac{1}{\sqrt{2}}(e_2 + e_{3}).
\end{align*}
We see that $\varepsilon(u_1) = -2 u_2$ and $\varepsilon(u_2) = 2 u_1$. Thus, in the space $(\Lambda \otimes \mathbb{R})/K$ with basis $\{u_1,u_2\}$, the induced map $\overline{\varepsilon}$ is represented as the matrix
\[
  \overline{\varepsilon} =
  \begin{pmatrix}
    0 & 2 \\
    -2 & 0
\end{pmatrix}.
  \]
  This matrix is clearly nondegenerate and skew-symmetric.
  Denote $x_{0} = (1,0)$ and $y_0 = (0,1)$. Following the procedure of Section~\ref{sec:heisenberggroup}, we choose a polarization $(\Lambda \otimes \mathbb{R})/K =\ell \oplus \ell'$ with $\ell = \mathbb{R} x_0$ and $\ell' = \mathbb{R} y_0$.

 We obtain a representation of the Heisenberg group $H_1$, realized on the Hilbert space~$L^2(\mathbb{R})$, with the following actions
\begin{align*}
 & e^{ib x_{0}}\cdot \vartheta(e^{ y}) = e^{-4 \pi  b y }\vartheta(e^{y}),\\
  &e^{i b y_0} \cdot \vartheta(e^{y}) = \vartheta(e^{y - i}) = e^{-ib  \partial_y} \vartheta(e^y), \\
  &\exp(t z) = e^{2 \pi i t} \mathrm{Id},
\end{align*}
with $y \in \ell'$. Note that these are the actions of the Heisenberg group with $\overline{\varepsilon}$ expressed in the basis $\{u_1,u_2\}$. As stated above, $\{u_3,u_4\}$ act as scalars
\begin{align*}
  e^{i b u_3}\cdot \vartheta(e^{ y}) = e^{-  \sqrt{2}\pi i b \lambda_{1} }\vartheta(e^{y}) \quad \text{ and } \quad
  e^{i b u_4} \cdot \vartheta(e^{y}) = e^{ - \sqrt{2} \pi i b \lambda_2 } \vartheta(e^{y}).
\end{align*}
Returning to the basis $\{e_1,e_2,e_3,e_4\}$, the following actions are obtained:
\begin{align*}
 & e^{i b e_1}\cdot \vartheta(e^{ y}) = e^{\frac{i b\sqrt{2}}{2}(u_1 + u_3)} \cdot \vartheta(e^{ y}) = e^{ - 2 \sqrt{2}\pi  b y - \pi i b\lambda_1 }\vartheta(e^{y}),\\
 & e^{i b e_2} \cdot \vartheta(e^{ y}) = e^{\frac{i b\sqrt{2}}{2}(u_2 + u_4)} \cdot \vartheta(e^{ y}) = e^{ -\frac{i b \sqrt{2}}{2}  \partial_y - \pi i b  \lambda_2 }\vartheta(e^{y}),\\
 & e^{i b e_3} \cdot \vartheta(e^{ y}) = e^{\frac{i b\sqrt{2}}{2}(- u_2 + u_4)} \cdot \vartheta(e^{ y}) = e^{\frac{i b \sqrt{2}}{2}  \partial_y - \pi i b  \lambda_2 }\vartheta(e^{y}),\\
 & e^{i b e_4}\cdot \vartheta(e^{ y}) = e^{\frac{i b\sqrt{2}}{2}(-u_1 + u_3)} \cdot \vartheta(e^{ y}) = e^{  2 \sqrt{2}\pi  by -\pi i b \lambda_1  }\vartheta(e^{y}),
\end{align*}
with $y \in \ell'$. Thus, we found a representation $\rho_{\mathcal{D},\ell}$ of the quantum torus algebra generated by $\{X_1,X_2,X_3,X_4\}$, where each generator acts on $\mathcal{H}(\ell)$ in the following manner:
\begin{equation}
  \label{eq:torusrep1}
\begin{aligned}
 & \rho_{\mathcal{D},\ell}(X_1) = e^{ - 2 \sqrt{2}\pi  b y - \pi i b\lambda_1 } &&  \rho_{\mathcal{D},\ell}(X_2) = e^{ -\frac{i b \sqrt{2}}{2}  \partial_y - \pi i b  \lambda_2 }, \\
 & \rho_{\mathcal{D},\ell}(X_3)= e^{\frac{i b \sqrt{2}}{2}  \partial_y - \pi i b  \lambda_2 } && \rho_{\mathcal{D},\ell}(X_4) =  e^{  2 \sqrt{2}\pi  by -\pi i b \lambda_1  }.
\end{aligned}
\end{equation}
Note that interchanging the roles of the Lagrangian subspaces $\ell$ and $\ell'$ is equivalent to performing a Fourier transform.

Let us change coordinates
\begin{equation}
\label{eq:changecoordinates}
 x = e^{-2 \sqrt{2} \pi b y}\quad \text{and} \quad \partial_{y} = -2 \sqrt{2} \pi b x \partial_{x},
\end{equation}
  and set
  \[
    \hbar = i \pi b^{2}, \quad T_{1} = e^{-\pi i b \lambda_{1}}, \quad T_{2} = e^{-\pi i b \lambda_{2}}.
    \]
  The representation then reduces to
  \begin{equation}
\begin{aligned}
  \label{eq:torusrep2}
 & \rho_{\mathcal{D},\ell}(X_1) = T_{1}x,  &&  \rho_{\mathcal{D},\ell}(X_2) =T_{2} e^{ 2 \hbar x \partial_{x} } \\
 & \rho_{\mathcal{D},\ell}(X_3)= T_{2}e^{- 2\hbar x \partial_{x} }, && \rho_{\mathcal{D},\ell}(X_4) =  T_{1} x^{-1} .
\end{aligned}
\end{equation}
\end{example}

Having established a representation of the quantum torus algebra associated with a given quantum cluster algebra, the next step is to extend this to a representation of the entire quantum cluster algebra. By this, we mean constructing a family of Hilbert-space representations on which the groupoid of cluster transformations acts via unitary operators. Such representations are built in \cite{goncharov2019quantum}, where a Schwartz space is introduced as a dense subspace:
\[
  \mathcal{S}_{\Sigma, \ell} \subset \mathcal{H}_{\Sigma, \ell}.
\]
Each quantum cluster transformation $\mu_k$ induces a unitary operator
\[
    \mathbf{K}_k \colon \mathcal{H}_{\Sigma',\ell'} \to \mathcal{H}_{\Sigma,\ell}, \qquad \mathbf{K}_k(\mathcal{S}_{\Sigma',\ell'}) = \mathcal{S}_{\Sigma,\ell},
\]
such that for all $ F \in \mathcal{O}_q(\mathcal{X}_{\Sigma}) $ and $ s \in \mathcal{S}_{\Sigma',\ell'} $, one has
\[
      \mathbf{K}_k \circ \rho_{\Sigma',\ell'}(F)(s) = \rho_{\Sigma,\ell}(F) \circ \mathbf{K}_k(s).
\]
Although operators $ \mathbf{K}_k $ play an elaborate role in the representation theory of cluster algebras, in this paper, we focus primarily on mapping quantum cluster variables to the Heisenberg algebra, as in \eqref{eq:torusrep2}.

\section{The $\epsilon$-expanded R-matrix}
\label{sec:perturbed}
\noindent
In this section, we derive the $\epsilon$-expansion of the $R$-matrix. We begin by recalling the cluster realization of $U_q(\mathfrak{sl}_2)$ constructed in \cite{Schrader2019}. We then compute the zeroth-order term from the quantum-group perspective. Next, we repeat the computation from the viewpoint of quantum cluster algebras, which also yields the first-order contribution in~$\epsilon$.

\subsection{Cluster Realization of $U_q(\mathfrak{sl}_{2})$}
\label{sec:clusterrealization}
In \cite{Schrader2019}, Schrader and Shapiro explicitly construct an embedding of $U_{q}(\mathfrak{sl}_{n})$ into a particular quantum torus algebra, generalizing the well-known result of Faddeev \cite{faddeev1999modulardoublequantumgroup} for $U_{q}(\mathfrak{sl}_{2})$. Significantly, the algebra automorphism of $U_{q}(\mathfrak{sl}_{n+1})^{\otimes 2}$ given by conjugation with the $R$-matrix was realized as a sequence of cluster mutations, much like \eqref{eq:roperator}.

Let $q$ be a formal parameter and consider the $\mathbb{C}(q)$-algebra $\mathfrak{D}$ generated by elements
\[
  \{E,F,K,K'\}
  \]
  subject to the relations

\begin{equation}
  \label{eq:sl2relations}
  \begin{aligned}
    KE &= q^2 EK, & KF &= q^{-2} FK, \\
    K' E &= q^{-2} EK', & K' F &= q^2 FK', \\
    K'K &= KK', & [E,F] &= (q-q^{-1})(K - K').
\end{aligned}
\end{equation}
In addition,  let $\mathcal{D}$ be the quantum torus algebra associated to the quiver $\mathcal{D}$ as in Example~\ref{ex:sl2schrodinger}.
\begin{theorem}
  \label{thm:embedding}
  There is an embedding of algebras $\iota \colon \mathfrak{D} \to \mathcal{D}$ defined by the following assignments
\begin{align*}
  E \mapsto i X_{4} (1+q X_{3}), \quad K \mapsto q^{2} X_{4} X_{3} X_{1}, \\
  F \mapsto i X_{1}(1+q X_{2}), \quad K' \mapsto q^{2}X_{1} X_{2}X_{4}.
  \end{align*}
\end{theorem}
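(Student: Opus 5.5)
The plan has two parts. First I check that the assignments respect the defining relations \eqref{eq:sl2relations}, so that $\iota$ is a well-defined algebra homomorphism. Then I show that $\iota$ is injective, using a leading-term argument in the quantum torus $\mathcal{D}$.

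Throughout I encode monomials by exponent vectors. From the matrix $\varepsilon$ of Example~\ref{ex:sl2schrodinger}, the relation $X_iX_j=q^{-2\varepsilon_{ij}}X_jX_i$ extends multiplicatively to $X^{a}X^{b}=q^{-2\langle a,b\rangle}X^{b}X^{a}$, where $\langle a,b\rangle=a^{\top}\varepsilon b$. Concretely,
\begin{itemize}
\item $X_1X_2=q^{-2}X_2X_1$ and $X_1X_3=q^{2}X_3X_1$;
\item $X_2X_4=q^{-2}X_4X_2$ and $X_3X_4=q^{2}X_4X_3$;
\item $X_1,X_4$ commute, and $X_2,X_3$ commute.
\end{itemize}
I write $K\mapsto q^2X_4X_3X_1$, with exponent vector $k=e_1+e_3+e_4$, and $K'\mapsto q^2X_1X_2X_4$, with exponent vector $k'=e_1+e_2+e_4$.

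\textbf{The $q$-commutation relations.} These reduce to evaluating the form $\langle\cdot,\cdot\rangle$.
\begin{itemize}
\item Since $\langle k,e_4\rangle=\varepsilon_{14}+\varepsilon_{34}=-1$ and $\langle k,e_3\rangle=\varepsilon_{13}+\varepsilon_{43}=0$, every monomial in $E=iX_4(1+qX_3)$ picks up exactly $q^{2}$ when moved past $K$. This gives $KE=q^2EK$.
\item The same computation with $e_1,e_2$ handles $KF=q^{-2}FK$.
\item It also handles $K'E=q^{-2}EK'$ and $K'F=q^2FK'$.
\item The relation $KK'=K'K$ follows from $\langle k,k'\rangle=0$.
\end{itemize}
These are finitely many one-line checks.

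\textbf{The commutator $[E,F]$.} I expand
\[
EF-FE=-\bigl([X_4,X_1]+q[X_4,X_1X_2]+q[X_4X_3,X_1]+q^2[X_4X_3,X_1X_2]\bigr).
\]
The first term vanishes because $X_1,X_4$ commute. For the last term, $\langle e_3+e_4,e_1+e_2\rangle=\varepsilon_{31}+\varepsilon_{42}=0$, so $X_4X_3$ and $X_1X_2$ commute and it vanishes too. The two middle terms are commutators of $q^{\pm2}$-commuting monomials, so each is $(1-q^{\mp2})$ times a monomial. After reordering to the normal forms $X_4X_3X_1$ and $X_1X_2X_4$, they should combine into $(q-q^{-1})\bigl(q^2X_4X_3X_1-q^2X_1X_2X_4\bigr)$.

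I expect the main obstacle to be this bookkeeping: tracking the powers of $q$ from reordering, together with the sign $i^2=-1$, so that the right-hand side comes out exactly as $(q-q^{-1})(K-K')$ rather than up to a factor. I would carry it out by writing every product in a fixed order $X_1,X_2,X_3,X_4$ before comparing.

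\textbf{Injectivity.} I use the PBW-type spanning set $E^aF^bK^cK'^d$ of $\mathfrak{D}$, which follows from the relations by reordering. Order the lattice $\mathbb{Z}^4$ compatibly with addition, for instance degree-lexicographically. Then the image of $E^aF^bK^cK'^d$ has a unique leading monomial, with exponent vector
\[
a(e_3+e_4)+b(e_1+e_2)+ck+dk'
\]
and a nonzero coefficient. The leading coefficient is nonzero because $\mathcal{D}$ is a domain with monomial basis, so leading terms multiply.

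The four vectors $e_3+e_4,\;e_1+e_2,\;k,\;k'$ are linearly independent. Indeed, $k-(e_3+e_4)=e_1$ and $k'-(e_1+e_2)=e_4$, from which $e_2$ and $e_3$ are recovered as well. Hence distinct $(a,b,c,d)$ give distinct leading exponents.

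Now suppose a nontrivial linear combination of these monomials maps to $0$. Take the term whose leading exponent is maximal; it cannot cancel against any other term, a contradiction. Therefore the monomials $E^aF^bK^cK'^d$ are linearly independent in $\mathfrak{D}$ and are mapped to linearly independent elements of $\mathcal{D}$, so $\iota$ is injective.
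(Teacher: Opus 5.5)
Your proposal is correct, but it takes a different route from the paper, which gives no argument of its own and simply cites \cite[Thm.~4.4]{Schrader2019}. Your direct verification agrees with the exchange matrix of Example~\ref{ex:sl2schrodinger}. This includes the step you left as ``should combine'': since $X_4X_1X_2=q^{2}X_1X_2X_4$ and $X_1X_4X_3=q^{2}X_4X_3X_1$, you get
\[
EF-FE=-\bigl(q(q^{2}-1)X_1X_2X_4+q(1-q^{2})X_4X_3X_1\bigr)=(q-q^{-1})\bigl(q^{2}X_4X_3X_1-q^{2}X_1X_2X_4\bigr),
\]
which is exactly $(q-q^{-1})(K-K')$, with $i^2=-1$ being the overall sign you already pulled out.

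The leading-term argument for injectivity is also sound. Any total order on $\mathbb{Z}^4$ compatible with addition works, including when $c,d\in\mathbb{Z}$ if $K,K'$ are taken invertible. The four leading exponents even form a $\mathbb{Z}$-basis of $\mathbb{Z}^4$, and in passing the argument shows that the $E^aF^bK^cK'^d$ form a basis of $\mathfrak{D}$, not just a spanning set. One bookkeeping point: the paper defines $\mathcal{D}$ over $\mathbb{Z}[q^{\pm1/2}]$, so you should extend scalars to $\mathbb{C}(q^{1/2})$. The factor $i$ requires this anyway, and the monomial basis, and hence your argument, survives it.

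Your route gives a self-contained, elementary proof in which every relation reduces to evaluating the skew form on a few exponent vectors. That is possible because $\mathfrak{sl}_2$ is small and has no Serre relations. The citation instead gives the statement for all $U_q(\mathfrak{sl}_{n+1})$, where the quantum Serre relations must also be checked and the images of the generators are longer nested sums. It also ties the embedding to the quantum cluster framework the paper relies on later (Theorem~\ref{thm:Rmatrixcluster} and Section~\ref{sec:towsl3}).
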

\begin{proof}
  See \cite[Thm. 4.4]{Schrader2019}.
\end{proof}

The algebra $\mathfrak{D}$ admits an $h$-formal version, denoted by $\overline{\mathfrak{D}}$. This is an algebra over the ring $\mathbb{C}\llbracket h \rrbracket$ generated by
\[
\{E,F,H,H'\}
\]
subject to the relations \eqref{eq:sl2relations}, where
\[
q = e^{h}, \qquad K = q^{H} = e^{hH}.
\]
The $h$-formal version of the quantum group $U_h(\mathfrak{sl}_2)$ is defined as the quotient of the algebra $\overline{\mathfrak{D}}$ by the relation
\[
  H + H' = 0.
\]
Note that $\mathfrak{D}$ embeds as a topological subalgebra of $\overline{\mathfrak{D}}$. 

Recall that the universal $R$-matrix of $\overline{\mathfrak{D}}$ is an element
\[
\mathcal{R} \in \overline{\mathfrak{D}} \widetilde{\otimes} \overline{\mathfrak{D}},
\]
where $\widetilde{\otimes}$ denotes the completion of the tensor product with respect to the $h$-adic topology.

We also consider the $h$-formal version of the algebra $\mathcal{D}$. Let $\{X_i \mid i=1,\dots,4\}$ be the generators of the quantum torus algebra associated with the $\mathcal{D}$-quiver, and let $(\varepsilon_{ij})$ denote its exchange matrix. Define $\overline{\mathcal{D}}$ to be the algebra over $\mathbb{C}\llbracket h \rrbracket$ generated by $\{x_i \mid i=1,\dots,7\}$ subject to the relations
\[
[x_i,x_j] = -2h \varepsilon_{ij}.
\]

Setting $q=e^{h}$ and mapping
\[
X_i \mapsto e^{x_i}
\]
turns $\mathcal{D}$ into a topological subalgebra of $\overline{\mathcal{D}}$. Under this identification, the embedding of Theorem~\ref{thm:embedding} extends to a homomorphism
\[
\overline{\iota} \colon \overline{\mathfrak{D}} \to \overline{\mathcal{D}}.
\]
Via $\overline{\iota}$, the operator $\mathrm{Ad}_{\mathcal{R}}$ defines an automorphism of $\overline{\mathcal{D}} \otimes \overline{\mathcal{D}}$. To simplify notation, we often omit the symbol $\overline{\iota}$.

Finally, we denote by $P$ the automorphism of $\overline{\mathcal{D}} \otimes \overline{\mathcal{D}}$ that permutes the tensor factors:
\[
P(X \otimes Y) = Y \otimes X.
\]

Recall the quiver $\mathcal{Z}$ and its associated quantum torus algebra, also denoted by $\mathcal{Z}$. It is obtained by amalgamating the quivers $\mathcal{D}$ and $\mathcal{E}$ as described in Example~\ref{ex:amalg}.

  \begin{theorem}
    \label{thm:Rmatrixcluster}
    The composition
    \[
      P \circ \mathrm{Ad}_{\mathcal{R}} \colon \overline{\mathcal{D}} \otimes \overline{\mathcal{D}}\to \overline{\mathcal{D}} \otimes \overline{\mathcal{D}}
    \]
      restricts to an algebra homomorphism on the subalgebra $\mathcal{Z}$ (defined in Example \ref{ex:amalg}). Moreover, the following automorphisms of $\mathcal{Z}$ coincide:
      \[
        P \circ \mathrm{Ad}_{\mathcal{R}} = (\sigma^{*})^{-1} \circ \mu_{4}^{q} \circ \mu_{6}^{q} \circ \mu_{2}^{q} \circ \mu_{4}^{q},
      \]
      where $\sigma$ is a permutation of the quiver returning to the original seed.
  \end{theorem}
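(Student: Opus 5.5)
Since both sides are algebra homomorphisms of the quantum torus $\mathcal{Z}$ (or of its $h$-adic completion inside $\overline{\mathcal{D}}\otimes\overline{\mathcal{D}}$), the plan is to compare them on the generators $X_1,\dots,X_7$. I will use the embedding $\mathcal{Z}\hookrightarrow\mathcal{D}\otimes\mathcal{E}\cong\overline{\mathcal{D}}\otimes\overline{\mathcal{D}}$ of Example~\ref{ex:amalg}, under which $X_1,X_2,X_3$ sit in the first factor, $X_5,X_6,X_7$ in the second, and $X_4\mapsto X_{4'}\otimes X_{5'}$. The universal $R$-matrix of the Drinfeld double I take in the standard factored form $\mathcal{R}=q^{H\otimes H/2}\,\Psi(E\otimes F)$, where $\Psi$ is a quantum exponential, equivalently a quantum dilogarithm in the cluster variables. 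The strategy is to factor $\mathrm{Ad}_{\mathcal{R}}$ into a monomial (Cartan) part and a dilogarithm part, and to match each with pieces of the mutation sequence $\mu_4^q\mu_6^q\mu_2^q\mu_4^q$.

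First, I will rewrite $E\otimes F$ through Theorem~\ref{thm:embedding}. Writing $E=iX_{4'}(1+qX_{3'})$ and $F=iX_{5'}(1+qX_{6'})$ in the $\mathcal{E}$-copy shows that $E\otimes F$ is a product of $q$-commuting monomials in the $\mathcal{Z}$-variables: essentially $X_4$, together with factors $(1+qX_3)$ and $(1+qX_6)$. Using the pentagon identity and the product formula $\Psi(U+V)=\Psi(U)\Psi(V)$ for $UV=q^2VU$, I will split $\Psi(E\otimes F)$ into a product of quantum dilogarithms $\Psi(\pm\text{monomial})$. The aim is that these monomials are exactly the cluster variables being mutated along the sequence $\mu_4,\mu_2,\mu_6,\mu_4$, each expressed in the original seed.

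Second, I will use the standard fact that a quantum mutation $\mu_k^q$ splits as conjugation by $\Psi(X_k)$ composed with a monomial (tropical) map $\tau_k$. Moving all the $\tau_k$ to one side, the composition $\mu_4^q\mu_6^q\mu_2^q\mu_4^q$ becomes $\mathrm{Ad}$ of a product of four dilogarithms in "tropicalized" variables, followed by a single monomial automorphism. I will then check that the four dilogarithms reassemble, through the pentagon identity, into $\Psi(E\otimes F)$ up to Cartan-type factors. The remaining monomial map, composed with $q^{H\otimes H/2}$ and with $P$, should match $\sigma^*$. Checking this monomial part only requires the linear action on exponent vectors, which Proposition~\ref{pro:adexp} and Proposition~\ref{pro:phixandp} handle: conjugation by a Gaussian acts linearly on the generators $x_i$.

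The main obstacle will be bookkeeping: the $q$-power normalizations, the signs and the factors of $i$, and the half-integer Cartan contributions coming from $H\otimes H/2$, which act on the frozen vertices $1$ and $7$. If the pentagon regrouping becomes unwieldy, my fallback is direct verification on the generators. I will compute $P\circ\mathrm{Ad}_{\mathcal{R}}(X_j)$ using $\mathrm{Ad}_{\Psi(U)}(V)=V\prod(1+q^{2r-1}U^{\pm1})^{\pm1}$ for $q$-commuting $U,V$. I will then compare the result with the explicit quantum version of $R^{\mathcal{X}}$, obtained by quantizing the classical formula for $R^{\mathcal{X}}$ in Section~\ref{sec:braidingoperator}. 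This is a finite check on seven generators, as in \cite[Thm.~4.4 and Sec.~5]{Schrader2019}. Restriction to $\mathcal{Z}$ then follows automatically, since the right-hand side visibly preserves $\mathcal{Z}$.
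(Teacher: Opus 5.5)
The paper gives no argument of its own here; it cites \cite[Thm.~7.1]{Schrader2019}. Your strategy is the standard argument behind that citation: write each $\mu_k^q$ as $\mathrm{Ad}_{\Psi(X_k)}$ composed with a monomial map, split $\mathcal{R}$ into a Cartan factor and $\Psi(E\otimes F)$, and match the two pieces separately. As you have set it up, however, two steps would fail.

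\textbf{The Cartan factor.} On $\overline{\mathcal{D}}\otimes\overline{\mathcal{D}}$, the element $q^{H\otimes H/2}\Psi(E\otimes F)$ is not the $R$-matrix of the double.
\begin{itemize}
\item The double's $\mathcal{R}$ lies in (a completion of) $\mathfrak{b}_+\otimes\mathfrak{b}_-$, so its Cartan factor is $q^{-H\otimes H'/2}$.
\item This agrees with $q^{H\otimes H/2}$ only modulo $H+H'$. Here $H+H'$ is central but nonzero in $\overline{\mathcal{D}}$: up to a power of $q$, $KK'$ is the central monomial $(X_1X_4)^2X_2X_3$. The two factors coincide only once $H'=-H$ is imposed, e.g.\ in $U_h(\mathfrak{sl}_2)$ or in the representation $\eta$ of Proposition~\ref{pro:perthom}.
\item With your choice, the monomial part already leaves $\mathcal{Z}$. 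Since $KX_1K^{-1}=q^{-2}X_1$, you get $P\circ\mathrm{Ad}_{q^{H\otimes H/2}}(X_1)=K^{-1}\otimes X_{1'}$. In the labels of Example~\ref{ex:amalg} this is proportional to $(X_{1'}X_{3'}X_{4'})^{-1}\otimes X_{5'}$, with unequal exponents at the glued vertices $4'$ and $5'$.
\item So for this $\mathcal{R}$ the restriction claim is false, and no matching with $\sigma^*$ is possible.
\end{itemize}
With the correct factor, $\tau:=P\circ\mathrm{Ad}_{q^{-H\otimes H'/2}}$ is the following monomial automorphism (up to powers of $q$): $X_1\mapsto X_1X_2X_4$, $X_2\mapsto X_5$, $X_3\mapsto X_6$, $X_4\mapsto X_2^{-1}X_4^{-1}X_6^{-1}$, $X_5\mapsto X_2$, $X_6\mapsto X_3$, $X_7\mapsto X_4X_6X_7$. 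This is exactly the leading-monomial part of $R^{\mathcal{X}}$ in Section~\ref{sec:braidingoperator}, permutation included. Your worry about half-integer exponents is misplaced: the $H$-weights of $X_1,\dots,X_4$ are $-2,0,0,2$.

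\textbf{Identifying the dilogarithms.} By Example~\ref{ex:amalg}, $1\otimes X_{6'}=X_5$, not $X_6$.
\begin{itemize}
\item Hence $E\otimes F=-X_4(1+qX_3)(1+qX_5)$ is supported on vertices $3,4,5$. These are the vertices of the opposite twist $\mu_4\mu_3\mu_5\mu_4$ (cf.\ the Remark after the theorem).
\item Its dilogarithm factors therefore cannot be the initial-seed tropicalizations of the variables mutated in $\mu_4,\mu_6,\mu_2,\mu_4$.
\item The comparison must be made after transport by $\tau$:
$P\circ\mathrm{Ad}_{\mathcal{R}}=\tau\circ\mathrm{Ad}_{\Psi(E\otimes F)}=\mathrm{Ad}_{\Psi(\tau(E\otimes F))}\circ\tau$.
\item Here $\tau(E\otimes F)\propto X_2^{-1}X_4^{-1}X_6^{-1}(1+qX_6)(1+qX_2)$ (orderings and $q$-powers aside), which is supported on $2,4,6$.
\item It is this element that you must split, using $\Psi(U+V)=\Psi(U)\Psi(V)$ on $q^2$-commuting pairs of its four terms. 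The result should match the four dilogarithms produced by $\mu_4,\mu_6,\mu_2,\mu_4$, with all monomial parts pushed to the right.
\end{itemize}

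Your fallback is not independent of this step. The formula for $\mathrm{Ad}_{\Psi(U)}V$ requires $U$ to be a monomial, so evaluating $P\circ\mathrm{Ad}_{\mathcal{R}}(X_j)$ directly already needs the same factorization.
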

  \begin{proof}
    See \cite[Thm. 7.1]{Schrader2019}.
  \end{proof}
  \begin{remark}
In this paper, we adopt a slightly different convention than used in \cite{Schrader2019,goncharov2019quantum}. In these works, the Dehn twist is performed in the opposite direction, leading to the mutation sequence $\mu_{4}^{q} \circ \mu_{3}^{q} \circ \mu_{5}^{q} \circ \mu_{4}^{q}$. To translate this convention to the convention taken in our approach, one must also interchange $E$ with $F$ and $K$ with $K'$. Our convention matches the approach in \cite{hikami2014braiding,hikami2015braids}.
    \end{remark}
  So far, we only considered the interpretation of $U_{q}(\mathfrak{sl}_{2})$ in terms of quantum cluster algebras in a universal manner (i.e., independent of a choice of representation). By mapping the quantum cluster variables to operators arising from the Schrödinger representation (as in Example \ref{ex:sl2schrodinger}), we can expand the $R$-matrix without reference to quantum groups. This is the topic of the next sections.
\subsection{Expanded $R$-matrix: a Quantum Group Perspective}
Let us again consider the representation $\rho_{\mathcal{D},\ell}$ derived in Example \ref{ex:sl2schrodinger}. By Theorem \ref{thm:embedding}, there is an embedding $\iota \colon \mathfrak{D} \to \mathcal{D}$ satisfying
\begin{align*}
  E \mapsto i X_{4} (1+q X_{3}), \quad K \mapsto q^{2} X_{4} X_{3} X_{1}, \\
  F \mapsto i X_{1}(1+q X_{2}), \quad K' \mapsto q^{2}X_{1} X_{2}X_{4}.
  \end{align*}
  With the representation $\rho_{\mathcal{D},\ell}$ at hand, it is straightforward to derive a $U_{q}(\mathfrak{sl}_{2})$-representation, with $q = e^{i \pi b^{2}}$.
\begin{proposition}
  \label{pro:perturbativerep}
Suppose that
\[
T_1^2 = T_2^{-1},
\qquad
iT^{-1/2} = T_1.
\]
Then the representation
\[
\rho_{\mathcal{D},\ell} \colon \mathcal{D} \to \mathrm{End}(\mathcal{S}_{\mathcal{D},\ell})
\]
defined in Example~\ref{ex:sl2schrodinger} is equivalent to the representation $\rho \colon \mathcal{D} \to \mathrm{End}(\mathcal{S}_{\mathcal{D},\ell})$ given by
\begin{align*}
  \rho(X_1) &= i T^{-1} x, &
  \rho(X_2) &= -q T^{2} e^{2\hbar x \partial_{x}}, \\
  \rho(X_3) &= -q^{-1} e^{-2\hbar x \partial_{x}}, &
  \rho(X_4) &= i x^{-1}.
\end{align*}
This induces a $U_q(\mathfrak{sl}_2)$-representation defined by
  \begin{align*}
    (\rho  \circ \iota)(E)  &= -x^{-1} (1 - e^{-2\hbar x \partial_{x}}), \\
    (\rho \circ \iota)(F)  &= -T^{-1} x (1 - q^2 T^{2} e^{2\hbar x \partial_{x}}), \\
    (\rho  \circ \iota)(K)  &= q^{-1} T^{-1} e^{-2\hbar x \partial_{x}}.
  \end{align*}
  with  $b \in \mathbb{R}$, $q = e^{i \pi b^2}$ and $\hbar := i \pi b^2$.
\end{proposition}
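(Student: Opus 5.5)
The plan is to exhibit the equivalence as conjugation by an explicit invertible operator built from two commuting pieces: a dilation of the coordinate $x$, and multiplication by a power of $x$. I will then obtain the $U_q(\mathfrak{sl}_2)$-formulas by substituting into Theorem~\ref{thm:embedding}.

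First I substitute the hypotheses into \eqref{eq:torusrep2}. From $T_1 = iT^{-1/2}$ and $T_2 = T_1^{-2} = -T$ we get
\[
\rho_{\mathcal{D},\ell}(X_1) = iT^{-1/2}x,\quad \rho_{\mathcal{D},\ell}(X_2) = -T e^{2\hbar x\partial_x},\quad \rho_{\mathcal{D},\ell}(X_3) = -T e^{-2\hbar x\partial_x},\quad \rho_{\mathcal{D},\ell}(X_4) = iT^{-1/2}x^{-1}.
\]
Before building the intertwiner, I check two invariants that any conjugation of the kind I intend must preserve. The product of the scalar prefactors of $X_1$ and $X_4$ is $-T^{-1}$ on both sides, and $X_2X_3 = T^2$ on both sides, since $(-qT^2)(-q^{-1}) = T^2$. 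So only the individual prefactors need to be redistributed.

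The dilation $D_c\colon \vartheta(x)\mapsto \vartheta(cx)$ satisfies $D_c\, x\, D_c^{-1} = c x$ and commutes with $x\partial_x$. Choosing $c = T^{-1/2}$ sends $iT^{-1/2}x \mapsto iT^{-1}x$ and $iT^{-1/2}x^{-1}\mapsto i x^{-1}$, while fixing $X_2$ and $X_3$.

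Next, multiplication by $x^{s}$ commutes with $x$. Since $x\partial_x$ acts as a derivation with $[x\partial_x, x^s] = s x^s$, we have $x^{s} e^{\pm 2\hbar x\partial_x} x^{-s} = e^{\mp 2\hbar s} e^{\pm 2\hbar x\partial_x}$. Choosing $s$ with $e^{-2\hbar s} = qT$ turns $-Te^{2\hbar x\partial_x}$ into $-qT^2 e^{2\hbar x\partial_x}$ and $-Te^{-2\hbar x\partial_x}$ into $-q^{-1}e^{-2\hbar x\partial_x}$. The composite $x^{s}D_c$ is therefore the intertwiner, and it gives exactly the stated $\rho$.

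For the second part I compose with $\iota$ from Theorem~\ref{thm:embedding}, using $q = e^{\hbar}$:
\begin{itemize}
\item $E \mapsto iX_4(1+qX_3) = i\cdot i x^{-1}(1 - e^{-2\hbar x\partial_x})$, which is the stated formula.
\item $F \mapsto iX_1(1+qX_2) = -T^{-1}x(1 - q^2T^2e^{2\hbar x\partial_x})$.
\item $K \mapsto q^2X_4X_3X_1 = qT^{-1}x^{-1}e^{-2\hbar x\partial_x}x$. The shift identity $e^{-2\hbar x\partial_x}x = e^{-2\hbar}x\,e^{-2\hbar x\partial_x}$ reduces this to $q^{-1}T^{-1}e^{-2\hbar x\partial_x}$.
\end{itemize}

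The main obstacle is analytic rather than algebraic. The operators $x^{s}$ and $D_c$ have complex parameters $s$ and $c = T^{-1/2}$, so one must fix a branch of $T^{1/2}$ and check that these operators preserve $\mathcal{S}_{\mathcal{D},\ell}$, or at least act on a common dense domain. I would treat the equivalence formally, in the $h$-adic, Weyl--Heisenberg sense of Section~\ref{sec:heisenbergalg}, where only the commutation relations are used. I would remark that genuine unitary equivalence is not claimed, since $|T|$ need not equal $1$.
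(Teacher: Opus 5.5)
Your argument is correct and is essentially the paper's proof. Your dilation $D_{T^{-1/2}}$ is exactly the paper's $S_2=e^{-(\hbar t/2)x\partial_x}$, and your $x^{s}$ with $e^{-2\hbar s}=qT$ is its $S_1=x^{(-t-1)/2}$. The only difference in ordering is that the paper first composes with $\iota$ and conjugates $\sigma=\rho_{\mathcal{D},\ell}\circ\iota$, then records the conjugated torus generators, whereas you conjugate the torus generators first.

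One thing you leave implicit is why the hypothesis $T_1^2=T_2^{-1}$ is there; the paper motivates it by requiring $\sigma(KK')=\pm\mathrm{Id}$. In your setup this is the one-line check $(\rho\circ\iota)(K')=q^2X_1X_2X_4=qTe^{2\hbar x\partial_x}=(\rho\circ\iota)(K)^{-1}$, which you need for $\rho\circ\iota$ to descend from $\mathfrak{D}$ to $U_q(\mathfrak{sl}_2)$.
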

\begin{proof}
The representation $\rho_{\mathcal{D},\ell} \colon \mathfrak{D} \to \mathrm{End}(\mathcal{S}_{\mathcal{D}, \ell})$ together with the map $\iota \colon \mathfrak{D} \to \mathcal{D}$ give rise to a $\mathfrak{D}$-representation $\sigma$ defined by
  \begin{align*}
    \sigma(E) &= i T_1 x^{-1} (1 + q T_2 e^{-2\hbar x \partial_{x}}), \quad
    &&\sigma(F) = i T_1 x (1 + q T_2 e^{2\hbar x \partial_{x}}), \\
    \sigma(K) &= T_1^2 T_2 e^{-2\hbar x \partial_{x}}, \quad
    &&\sigma(K') = T_1^2 T_2 e^{2\hbar x \partial_{x}}.
  \end{align*}
  The condition $\sigma(K K') =\pm \mathrm{Id}$ implies $T_1^2 = T_2^{-1}$. By setting $i T^{-1/2} := T_1$, we obtain
  \begin{align*}
    \sigma(E) &= -T^{-\frac1 2} x^{-1} (1 - q T e^{-2\hbar x \partial_{x}}), \\
    \sigma(F) &= -T^{-\frac1 2} x (1 - q T e^{2\hbar x \partial_{x}}), \\
    \sigma(K) &= e^{-2\hbar x \partial_{x}}.
  \end{align*}

  Now assume $T = e^{\hbar t}$ and define the operator
  \[
    S_1 := x^{(-t - 1)/2} = e^{(-t - 1)\log x / 2}.
  \]
  This operator is well-defined under the coordinate transformation in \eqref{eq:changecoordinates}. Then
  \[
    S_1 e^{-2\hbar x \partial_{x}} S_1^{-1} = e^{-2\hbar x \partial_{x} - (t + 1)\hbar} = q^{-1} T^{-1} e^{-2\hbar x \partial_{x}}.
  \]
  Conjugating the representation $\sigma$ with $S_1$ yields
  \begin{align*}
    S_1 \sigma(E) S_1^{-1} &= -T^{-\frac1 2} x^{-1} (1 - e^{-2\hbar x \partial_{x}}), \\
    S_1 \sigma(F) S_1^{-1} &= -T^{-\frac1 2} x (1 - q^2 T^{2} e^{2\hbar x \partial_{x}}), \\
    S_1 \sigma(K) S_1^{-1} &= q^{-1} T^{-1} e^{-2\hbar x \partial_{x}}.
  \end{align*}

  Next, conjugate with
  \[
    S_2 := e^{(-\frac{\hbar t}{2}) x \partial_{x}}.
  \]
  Observe that
  \[
    S_2 x S_2^{-1} = e^{(-\frac{\hbar t}{2}) x \partial_{x}} x e^{-(-\frac{\hbar t}{2}) x \partial_{x}} = T^{-\frac1 2} x.
  \]
  Therefore, further conjugation by $S_2$ implies
  \begin{align*}
    S_2 S_1 \sigma(E) S_1^{-1} S_2^{-1} &= -x^{-1} (1 - e^{-2\hbar x \partial_{x}}), \\
    S_2 S_1 \sigma(F) S_1^{-1} S_2^{-1} &= -T^{-1} x (1 - q^2 T^{2} e^{2\hbar x \partial_{x}}), \\
    S_2 S_1 \sigma(K) S_1^{-1} S_2^{-1} &= q^{-1} T^{-1} e^{-2\hbar x \partial_{x}}.
  \end{align*}

  Set $S := S_2 S_1$. Recall the representation $\rho_{\mathcal{D},\ell}$ given by:
  \begin{align*}
    \rho_{\mathcal{D},\ell}(X_1) &= T_1 x, & \rho_{\mathcal{D},\ell}(X_2) &= T_2 e^{2\hbar x \partial_{x}}, \\
    \rho_{\mathcal{D},\ell}(X_3) &= T_2 e^{-2\hbar x \partial_{x}}, & \rho_{\mathcal{D},\ell}(X_4) &= T_1 x^{-1}.
  \end{align*}
  Then the conjugated representation satisfies
  \begin{align*}
    S \rho_{\mathcal{D},\ell}(X_1) S^{-1} &= i T^{-1} x, & S \rho_{\mathcal{D},\ell}(X_2) S^{-1} &= -q T^{2} e^{2\hbar x \partial_{x}}, \\
    S \rho_{\mathcal{D},\ell}(X_3) S^{-1} &= -q^{-1} e^{-2\hbar x \partial_{x}}, & S \rho_{\mathcal{D},\ell}(X_4) S^{-1} &= i x^{-1},
  \end{align*}
  as claimed.
\end{proof}
\begin{definition}
  \label{def:pertrep}
The representation $\rho \colon \mathcal{D} \to \mathrm{End}(\mathcal{S})$ constructed in Proposition~\ref{pro:perturbativerep} is called the \textit{perturbative representation}.
\end{definition}

\begin{remark}
  \label{rem:positiverep}
It is equally possible to consider the equivalent \textit{positive representations} constructed in \cite{ip2018cluster,ip_2020}. In this line of research, the representation $\mathcal{P}_\lambda$ of split real quantum groups $U_q(\mathfrak{g}_{\mathbb{R}})$ are constructed. By forgetting the real structure of these representations, their generators can be expressed as Laurent polynomials in certain $q$-commuting variables, leading to an embedding of quantum groups into a certain quantum torus.

A change of words of the reduced expression of the longest element $w_0 \in W$ of the Weyl group induces unitary equivalences of the positive representations. These unitary equivalences correspond to quantum cluster mutations of the quantum torus algebra.

As remarked in \cite[Remark 4.6]{Schrader2019}, the positive representations obtained in this way are equivalent to the representations defined in Definition \ref{ex:sl2schrodinger}.
\end{remark}

We introduce a rescaled version of the $ h$-adic algebras considered above. Let $\epsilon$ be an additional parameter and consider the ring $\mathbb{C}_{\epsilon} := \mathbb{C}(\epsilon)$. For any $h$-formal algebra introduced previously, we define its \emph{$\epsilon$-rescaled version} by replacing every occurrence of $h$ by $\epsilon h$. We denote these algebras with a tilde instead of an overline. For instance, the algebra $\widetilde{\mathfrak{D}}$ is the $\mathbb{C}_{\epsilon}\llbracket h \rrbracket$-algebra generated by $\{E,F,H,H'\}$ with the same defining relations as $\overline{\mathfrak{D}}$, except that
\[
q = e^{\epsilon h}, \qquad K = e^{\epsilon h H}, \quad \text{ and } \quad K' = e^{\epsilon h H'}.
\]
Similarly, the algebra $\widetilde{\mathcal{D}}$ is generated by $\{x_i \mid i=1,\dots,4\}$ with relations
\[
[x_i,x_j] = -2\epsilon h \varepsilon_{ij}.
\]
The $R$-matrix $\mathcal{R}$ is also defined with $h$ replaced by $\epsilon h$. The map $\iota \colon \mathfrak{D} \to \mathcal{D}$ extends to a map $\widetilde{\iota} \colon \widetilde{\mathfrak{D}} \to \widetilde{\mathcal{D}}$.

\begin{proposition}
  \label{pro:perthom}

  The perturbative representation gives rise to a homomorphism of algebras
  $\psi \colon \widetilde{\mathcal{D} }\to A_{1}(\mathbb{C}_{\epsilon}(t))[\mathbf{x}^{-1}]$ defined by
  \begin{align*}
    & X_1 \mapsto i T^{-1} \mathbf{x}, && X_2 \mapsto -q T^{2} e^{2\epsilon h \mathbf{x} \mathbf{p}}, \\
    & X_3 \mapsto -q^{-1} e^{-2 \epsilon h \mathbf{x} \mathbf{p}}, && X_4 \mapsto i \mathbf{x}^{-1}
  \end{align*}
  with $T = e^{h t}$ and $q = e^{\epsilon h}$. In particular, this induces a homomorphism $\eta \colon \widetilde{\mathfrak{D}} \to A_1(\mathbb{C}_{\epsilon}(t))$ defined by
  \begin{align*}
   &E \mapsto  -\mathbf{x}^{-1} (1 - e^{-2 \epsilon h \mathbf{x} \mathbf{p}}),\\
   &F \mapsto  -T^{-1} \mathbf{x} (1 - q^2 T^{2} e^{2\epsilon h \mathbf{x} \mathbf{p}}), \\
    &H \mapsto -2 h \epsilon \mathbf{x}\mathbf{p} - th - h \epsilon, \\
 &H’ \mapsto 2 h \epsilon \mathbf{x}\mathbf{p} + th + h \epsilon,
  \end{align*}
  where $\eta = \psi \circ \widetilde{\iota}$.
\end{proposition}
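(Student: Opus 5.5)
The plan is to check that the proposed assignment for $\psi$ respects the defining relations of the $\epsilon$-rescaled quantum torus $\widetilde{\mathcal{D}}$. Then I would compose with $\widetilde{\iota}$ from Theorem~\ref{thm:embedding} and read off $\eta$ on generators.

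\textbf{Step 1 (well-definedness of $\psi$).} The relations to verify are $X_iX_j=q^{-2\varepsilon_{ij}}X_jX_i$, with $q=e^{\epsilon h}$ and $\varepsilon$ the exchange matrix of Example~\ref{ex:sl2schrodinger}. The basic tool is the one-variable case of Lemma~\ref{lem:expconj}, namely
\[
e^{a\mathbf{x}\mathbf{p}}\,\mathbf{x}\,e^{-a\mathbf{x}\mathbf{p}}=e^{a}\mathbf{x},
\]
together with its consequence $e^{a\mathbf{x}\mathbf{p}}\mathbf{x}^{-1}e^{-a\mathbf{x}\mathbf{p}}=e^{-a}\mathbf{x}^{-1}$ in the localization. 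Taking $a=\pm2\epsilon h$ gives $e^{\pm 2\epsilon h\mathbf{x}\mathbf{p}}\mathbf{x}=q^{\pm2}\mathbf{x}e^{\pm2\epsilon h\mathbf{x}\mathbf{p}}$. This yields:
\begin{itemize}
\item $X_1X_2=q^{-2}X_2X_1$ (from $\varepsilon_{12}=1$);
\item $X_1X_3=q^{2}X_3X_1$ (from $\varepsilon_{13}=-1$);
\item $X_2X_4=q^{-2}X_4X_2$ and $X_3X_4=q^{2}X_4X_3$.
\end{itemize}
The pairs $(1,4)$ and $(2,3)$ commute trivially, and the scalars $i$, $T$, $q$ are central. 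The images are also invertible, since $\mathbf{x}^{-1}$ is adjoined and the exponentials have obvious inverses. Hence $\psi$ extends to the whole quantum torus.

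One preliminary point must come first: $A_1[\mathbf{x}^{-1}]$ has to be a genuine localization. $\{\mathbf{x}^n\}$ is an Ore set because $\operatorname{ad}_{\mathbf{x}}$ is locally nilpotent on $A_1$. I would cite this rather than prove it.

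\textbf{Step 2 (images of $E,F,K,K'$).} Next, substitute into $\widetilde{\iota}$:
\begin{align*}
E&\mapsto i\cdot i\mathbf{x}^{-1}\bigl(1-e^{-2\epsilon h\mathbf{x}\mathbf{p}}\bigr),\\
F&\mapsto i\cdot iT^{-1}\mathbf{x}\bigl(1-q^2T^2e^{2\epsilon h\mathbf{x}\mathbf{p}}\bigr).
\end{align*}
For $K$, the product $q^2X_4X_3X_1$ becomes $qT^{-1}\mathbf{x}^{-1}e^{-2\epsilon h\mathbf{x}\mathbf{p}}\mathbf{x}$. Commuting $\mathbf{x}$ through by the identity above turns this into $q^{-1}T^{-1}e^{-2\epsilon h\mathbf{x}\mathbf{p}}$. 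Similarly, $K'\mapsto qTe^{2\epsilon h\mathbf{x}\mathbf{p}}$, so $KK'\mapsto1$, which is consistent with $H+H'\mapsto0$. The images of $H$ and $H'$ are then obtained by taking the formal logarithm of the images of $K$ and $K'$. This is an exponent of a single element, so it involves no BCH correction. I will match $e^{\epsilon hH}$ against $e^{-2\epsilon h\mathbf{x}\mathbf{p}-\epsilon h-(\text{log of }T)}$ and take care over how the $T=e^{ht}$ factor is normalized relative to $\epsilon$. This bookkeeping is where I expect the stated formula's $-th$ term to need care.

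\textbf{Step 3 (the main obstacle).} The statement places $\eta$ in $A_1(\mathbb{C}_\epsilon(t))$ without $\mathbf{x}^{-1}$, whereas $E\mapsto-\mathbf{x}^{-1}(\cdots)$ visibly involves $\mathbf{x}^{-1}$. To resolve this I would use Proposition~\ref{pro:normalordering}:
\[
e^{-2\epsilon h\mathbf{x}\mathbf{p}}=\mathcal{N}\bigl(\exp(x(e^{-2\epsilon h}-1)p)\bigr).
\]
Subtracting this from $1$ removes the constant term of the exponential. Every remaining normally ordered monomial carries a factor $x^k$ with $k\ge1$ on the left, so $\mathbf{x}^{-1}$ cancels and the image of $E$ lies in $A_1$. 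The images of $F$, $H$ and $H'$ manifestly involve no negative powers. This shows that $\eta$ lands in $A_1(\mathbb{C}_\epsilon(t))$, and $\eta=\psi\circ\widetilde{\iota}$ holds by construction.
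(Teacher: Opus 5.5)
Your proposal is correct, and it argues differently from the paper. The paper's proof is a one-line transport. Proposition~\ref{pro:perturbativerep} already realizes $\rho$ as a genuine representation (the Schr\"odinger representation $\rho_{\mathcal{D},\ell}$ conjugated by $S=S_2S_1$), and $\psi$ and $\eta$ are read off via $x\mapsto\mathbf{x}$, $\partial_x\mapsto\mathbf{p}$, $\hbar\mapsto\epsilon h$. So the homomorphism property is inherited rather than checked.

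You instead verify the four nontrivial relations $X_iX_j=q^{-2\varepsilon_{ij}}X_jX_i$ directly from $e^{a\mathbf{x}\mathbf{p}}\mathbf{x}^{\pm1}e^{-a\mathbf{x}\mathbf{p}}=e^{\pm a}\mathbf{x}^{\pm1}$, and then recompute $\eta$ on $E,F,K,K'$. Your values agree with those in the proof of Proposition~\ref{pro:perturbativerep}.

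Each route buys something different:
\begin{itemize}
\item The paper's route explains where the constants come from: the constraint $T_1^2=T_2^{-1}$ forced by $KK'=\pm1$, and the conjugation by $S$.
\item Yours is self-contained and shows that the central scalars $i,T$ play no role in the homomorphism property. It also sidesteps the unstated rescaling between $T=e^{\hbar t}$ in Proposition~\ref{pro:perturbativerep} and $T=e^{ht}$ here.
\item Your Step~3 supplies something the paper omits. Normal ordering gives $\eta(E)=\sum_{n\ge1}\frac{(e^{-2\epsilon h}-1)^n}{n!}\mathbf{x}^{n-1}\mathbf{p}^n$, which justifies the codomain $A_1(\mathbb{C}_\epsilon(t))$. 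The paper uses this fact only implicitly, in the proof of Theorem~\ref{thm:rmatrixburau}.
\end{itemize}

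Two points need tightening. First, complete the $H$ bookkeeping rather than leaving it open. From $\eta(K)=e^{-\epsilon h-ht-2\epsilon h\mathbf{x}\mathbf{p}}$ and $K=e^{\epsilon hH}$ you get
\[
\eta(H)=-2\mathbf{x}\mathbf{p}-1-t\epsilon^{-1},\qquad \eta(H')=2\mathbf{x}\mathbf{p}+1+t\epsilon^{-1}.
\]
So the displayed formulas in the statement are $\epsilon h\,\eta(H)$ and $\epsilon h\,\eta(H')$, i.e.\ the images of $\log K$ and $\log K'$. The discrepancy is an overall factor $\epsilon h$, not the $-th$ term alone; the $t\epsilon^{-1}$ is exactly the effect of normalizing $T=e^{ht}$ rather than $e^{\epsilon ht}$. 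The value $\eta(H)=-(2\mathbf{x}\mathbf{p}+t\epsilon^{-1}+1)$ is the one the paper actually uses in the proof of Theorem~\ref{thm:rmatrixburau}.

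Second, your Ore argument via local nilpotence of $\mathrm{ad}_{\mathbf{x}}$ only covers the polynomial Weyl algebra. The elements $e^{\pm2\epsilon h\mathbf{x}\mathbf{p}}$ live in the $h$-adic completion, where $\mathrm{ad}_{\mathbf{x}}^n(e^{a\mathbf{x}\mathbf{p}})=(1-e^{a})^n\mathbf{x}^ne^{a\mathbf{x}\mathbf{p}}\neq0$. The fix is to localize first and then complete $h$-adically; all your identities, including Step~3, hold there.
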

\begin{proof}
  This follows immediately from Proposition \ref{pro:perturbativerep}.
\end{proof}
One benefit of working with the perturbative representation is the fact that the $R$-matrix can be described as a perturbative series in $\epsilon$, as we shall see below.
\begin{theorem}
  \label{thm:rmatrixburau}
  Let $\mathcal{R}$ be the universal $R$-matrix of $\widetilde{\mathfrak{D}}$ and
  \[
    \varphi \colon \mathcal{A}_{n}(\mathbb{C}(t)) \to A_{n}(\mathbb{C}(t))
  \]
  be the monoid homomorphism as in Notation \ref{def:monhom}. The following equality holds
  \[
    (\eta \otimes \eta)(\mathcal{R}) = \alpha \cdot \left( \varphi \left(
\begin{pmatrix}
                                         T & 0 \\
                                         1-T^{2} & T
\end{pmatrix}\right) + O(\epsilon)\right), \qquad T = e^{ht}
\]
for some $\alpha\in \mathbb{C}_\epsilon(t) \llbracket h \rrbracket$.
\end{theorem}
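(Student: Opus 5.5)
The plan is to take the explicit formula for the universal $R$-matrix of the $\epsilon$-rescaled double $\widetilde{\mathfrak{D}}$, push it through $\eta\otimes\eta$ using Proposition~\ref{pro:perthom}, and compare the $\epsilon^0$ part with Proposition~\ref{pro:normalordering}. I would use the standard factorized form
\[
\mathcal{R} = e^{\frac{\epsilon h}{2} H\otimes H'}\,\sum_{n\ge 0}\frac{(E\otimes F)^n}{[n]_q!}\,c_n(q),
\]
equivalently a Cartan factor times the quantum exponential $\exp_{q^2}\big(\tfrac{1}{q-q^{-1}}E\otimes F\big)$. The normalization is fixed by the relation $[E,F]=(q-q^{-1})(K-K')$.

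\textbf{Cartan factor.} By Proposition~\ref{pro:perthom}, $H\mapsto -2h\epsilon\,\mathbf{x}\mathbf{p}-th-h\epsilon$. Hence $\tfrac{\epsilon h}{2}(\eta\otimes\eta)(H\otimes H')$ equals
\[
\tfrac{\epsilon h}{2}\,(-2h\epsilon\,\mathbf{x}_1\mathbf{p}_1-th-h\epsilon)(2h\epsilon\,\mathbf{x}_2\mathbf{p}_2+th+h\epsilon).
\]
Expanding this, the pure scalar terms go into $\alpha$. The cross terms are $-\epsilon h^2 t\,\mathbf{x}_1\mathbf{p}_1$ and $-\epsilon h^2 t\,\mathbf{x}_2\mathbf{p}_2$ (with sign and normalization still to be tracked), up to $O(\epsilon^2)$. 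As written these vanish at $\epsilon=0$, so they cannot produce the diagonal entries $T$ of the matrix. To get a nonzero $\epsilon^0$ limit, I would rescale $\mathbf{p}\mapsto \mathbf{p}/\epsilon$ (equivalently $\mathbf{x}\mathbf{p}$ by $\epsilon^{-1}$), or note that the natural variables are $\epsilon\mathbf{p}$. I expect the intended reading is that $\epsilon h\,\mathbf{x}\mathbf{p}$ carries an implicit $\epsilon^{-1}$ through the convention $q=e^{\epsilon h}$ with $T=e^{ht}$ fixed. So the first step is to pin down this scaling convention and check that the Cartan part gives $\exp\big(ht(\mathbf{x}_1\mathbf{p}_1+\mathbf{x}_2\mathbf{p}_2)\big)$ modulo $\epsilon$ and scalars. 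By Proposition~\ref{pro:normalordering} this is $\varphi(\mathrm{diag}(T,T))$.

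\textbf{Nilpotent part.} Next compute $E\otimes F\mapsto \mathbf{x}_1^{-1}(1-e^{-2\epsilon h\mathbf{x}_1\mathbf{p}_1})\otimes T^{-1}\mathbf{x}_2(1-q^2T^2e^{2\epsilon h\mathbf{x}_2\mathbf{p}_2})$. To leading order the first factor is $2\epsilon h\,\mathbf{p}_1$ and the second is $T^{-1}(1-T^2)\mathbf{x}_2$. Dividing by $q-q^{-1}\approx 2\epsilon h$, the quantum exponential tends to $\exp\big(T^{-1}(1-T^2)\,\mathbf{x}_2\mathbf{p}_1\big)$, with $q$-factorials becoming ordinary factorials as $\epsilon\to 0$. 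That exponential is $\mathcal{N}$ of itself, i.e.\ $\varphi$ of the unipotent matrix with off-diagonal $(2,1)$ entry $T^{-1}(1-T^2)$. Theorem~\ref{cor:identity} then multiplies this with the Cartan factor: $\mathrm{diag}(T,T)$ times the unipotent gives $\begin{pmatrix}T&0\\1-T^2&T\end{pmatrix}$, in the appropriate order. The scalar terms and the $e^{-th\cdot}$ scalar pieces are collected into $\alpha$.

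\textbf{Main obstacle.} The delicate step is justifying the termwise $\epsilon\to0$ limit of the quantum exponential. One has to show that $(E\otimes F)^n/(q-q^{-1})^n[n]_q!$ equals its classical limit plus $O(\epsilon)$ uniformly in $h$-adic order, despite the $\epsilon^{-1}$ from $q-q^{-1}$. I would handle this by writing $1-e^{-2\epsilon h\mathbf{x}\mathbf{p}}=2\epsilon h\,\mathbf{x}\mathbf{p}\cdot g(\epsilon h\mathbf{x}\mathbf{p})$ with $g(0)=1$, so the $\epsilon$ factors cancel exactly. Normal ordering corrections from Proposition~\ref{pro:perturbedexptill2} only contribute at higher order in $\epsilon$. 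Consistency of the placement of $\mathbf{x}_1^{-1}$ and the ordering conventions in $\varphi$ also has to be checked.
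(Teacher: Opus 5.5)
Your nilpotent part ($[n]_q!\to n!$ and $(q-q^{-1})^{-1}\eta(E)\otimes\eta(F)\to (T^{-1}-T)\mathbf{p}_1\mathbf{x}_2$) and your final assembly via Theorem~\ref{cor:identity} match the paper. The Cartan step, however, has a genuine gap.

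The expression listed for $H$ in Proposition~\ref{pro:perthom} is really the image of $\epsilon hH=\log K$, not of $H$. Since $K=e^{\epsilon hH}$ and $\eta(K)=q^{-1}T^{-1}e^{-2\epsilon h\mathbf{x}\mathbf{p}}$, with $T=e^{ht}$ \emph{not} rescaled by $\epsilon$, one has $\eta(H)=-(2\mathbf{x}\mathbf{p}+t\epsilon^{-1}+1)$. The missing idea is that the $\epsilon^{-1}$ sits in the $t$-term of $\eta(H)$, not in $\mathbf{x}\mathbf{p}$. With the Cartan factor $q^{H\otimes H/2}=e^{\frac{\epsilon h}{2}H\otimes H}$ one gets
\[
\tfrac{\epsilon h}{2}(\eta\otimes\eta)(H\otimes H)=2\epsilon h\,\mathbf{x}_1\mathbf{p}_1\mathbf{x}_2\mathbf{p}_2+h(t+\epsilon)(\mathbf{x}_1\mathbf{p}_1+\mathbf{x}_2\mathbf{p}_2)+\frac{h(t+\epsilon)^2}{2\epsilon}.
\]
Three things follow:
\begin{itemize}
\item The cross term survives at $\epsilon=0$ as $ht(\mathbf{x}_1\mathbf{p}_1+\mathbf{x}_2\mathbf{p}_2)$, which equals $\varphi(\mathrm{diag}(T,T))$ by Proposition~\ref{pro:normalordering}.
\item The quartic term is $O(\epsilon)$.
\item The central scalar $e^{ht^2/(2\epsilon)}e^{ht}e^{h\epsilon/2}$ is absorbed into $\alpha$. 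This pole in $\epsilon$ is exactly why $\alpha$ is only asserted to lie in $\mathbb{C}_\epsilon(t)\llbracket h\rrbracket$.
\end{itemize}

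Your proposed repair, substituting $\mathbf{p}\mapsto\mathbf{p}/\epsilon$, does not work. It is not an endomorphism of $A_1$, since it changes $[\mathbf{p},\mathbf{x}]=1$ into $\epsilon^{-1}$, so the result is no longer a representation of $\widetilde{\mathfrak{D}}$. It would also turn $\eta(E)=-\mathbf{x}^{-1}(1-e^{-2\epsilon h\mathbf{x}\mathbf{p}})$ into an element of order $\epsilon^0$. Then $(q-q^{-1})^{-1}E\otimes F$ would acquire an $\epsilon^{-1}$ pole, destroying your own nilpotent computation. The legitimate rescaling $\mathbf{x}\mapsto\epsilon\mathbf{x}$, $\mathbf{p}\mapsto\epsilon^{-1}\mathbf{p}$ fixes $\mathbf{x}\mathbf{p}$, so it cannot help either.

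Finally, the sign is not mere bookkeeping. The Cartan factor must be $q^{+H\otimes H/2}$, as in the paper's formula, equivalently $q^{-H\otimes H'/2}$ after $H'=-H$. Your $e^{\frac{\epsilon h}{2}H\otimes H'}$ would produce $\varphi(\mathrm{diag}(T^{-1},T^{-1}))$. That gives the matrix $\begin{pmatrix}T^{-1}&0\\T^{-2}-1&T^{-1}\end{pmatrix}$ rather than the one in the statement.
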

\begin{proof}
  Recall that
  \begin{align*}
    \mathcal{R} &= \sum_{n = 0}^{\infty} \frac{1}{[n]_{q}!} q^{\frac{H \otimes H}{2} + \frac{n(n-1)}{2}} E^{n} \otimes (q-q^{-1})^{-n}F^{n}.
    \end{align*}
 First, we note that
    \begin{align*}
      (\eta \otimes \eta) \left( \frac{E^{n} \otimes F^{n}}{(q-q^{-1})^{n}} \right) &=\left(  \mathbf{x}^{-1} \frac{1- e^{-2 h \epsilon \mathbf{x} \mathbf{p}}}{q-q^{-1}}  \otimes T^{-1} \mathbf{x} (1-q^{2} T^{2} e^{2 h \epsilon \mathbf{x}\mathbf{p} })\right)^{n}\\
     &= (T^{-1} -T)^{n} \mathbf{p}^{n} \otimes \mathbf{x}^{n} + O(\epsilon).
    \end{align*}
    Using the fact that $(\psi \circ \iota)([n]!) = n! + O(\epsilon)$, we find
    \begin{align*}
      (\eta \otimes \eta) \left( \sum_{n = 0}^{\infty} \frac{1}{n!} \frac{E^{n} \otimes F^{n}}{(q-q^{-1})^{n}} \right) &= e^{(T^{-1}-T) \mathbf{x} \otimes \mathbf{p}} + O(\epsilon) \\
                                                                                                                       &= \varphi \left(\begin{pmatrix}
                                                                                                                           1 & 0 \\
                                                                                                                           T^{-1}-T & 1
                                                                                                                           \end{pmatrix}\right) + O(\epsilon).
    \end{align*}
    Moreover,
    \begin{align*}
      (\eta \otimes \eta) \left(q^{\frac{H \otimes H}{2}} \right) &= \exp \left( h \epsilon (2\mathbf{x} \mathbf{p} + t \epsilon^{-1} +1) \otimes (2 \mathbf{x} \mathbf{p} + t \epsilon^{-1} +1 \right)/2) \\
                                                                  &= e^{ht} e^{\frac{h t^{2}\epsilon^{-1}}{2}}\left( \exp \left(h(t \mathbf{x} \mathbf{p} \otimes 1 + 1 \otimes t \mathbf{x} \mathbf{p} ) \right) + O(\epsilon)\right)\\
                                                                  &= e^{ht} e^{\frac{h t^{2}\epsilon^{-1}}{2}}\left( \varphi \left(
                                                                    \begin{pmatrix}
                                                                      T & 0 \\
                                                                      0 & T
                                                                    \end{pmatrix} \right) +O(\epsilon)\right).
      \end{align*}
Using the fact that $\varphi$ is a monoid homomorphism, as was shown in Theorem \ref{cor:identity}, we conclude
  \begin{align*}
    (\eta \otimes \eta)(\mathcal{R}) &= e^{ht} e^{\frac{h t^{2}\epsilon^{-1}}{2}} \left( \varphi \left(
                                                                    \begin{pmatrix}
                                                                      T & 0 \\
                                                                      0 & T
                                                                    \end{pmatrix}\right) 
\varphi \left(\begin{pmatrix}
                                                                      1 &  0 \\
                                                                      T^{-1}-T & 1
                                                                    \end{pmatrix}
                                       \right) +O(\epsilon)\right)\\
                                     &=e^{ht} e^{\frac{h t^{2}\epsilon^{-1}}{2}} \left( \varphi \left(
                                       \begin{pmatrix}
                                         T & 0 \\
                                         1-T^{2} & T
                                        \end{pmatrix}\right) +O(\epsilon) \right) 
    \end{align*}
    as desired.
\end{proof}

One possible approach to building a perturbed $R$-matrix is to carry out a higher-order expansion in $\epsilon$, in the spirit of \cite{overbay2013perturbative}, which relies on repeated applications of the Baker--Campbell--Hausdorff formula. Our approach is different: we expand the $R$-matrix from the perspective of quantum cluster algebras, as discussed in the next section.

We note that $(\eta \otimes \eta)(\mathcal{R})$ is equal (up to a scalar) to the R-matrix of the XC-algebra $\mathcal{A}$ as in \cite[Prop. 4.3]{bosch2025tensorsgaussiansalexanderpolynomial}. It was shown in \cite[Thm. 4.6]{bosch2025tensorsgaussiansalexanderpolynomial} that the universal invariant of a knot related to this $R$-matrix (with auxiliary structure coming from the XC-structure) results in the Alexander polynomial.

\subsection{Expanded $R$-matrix: a Quantum Cluster Algebra Perspective}
\label{sec:quantumclusterper}
Although the quantum group perspective is useful, we aim to work entirely in the language of cluster algebras. Accordingly, we interpret Theorem \ref{thm:pertrmatrix} purely in cluster-algebraic terms. By considering higher orders of $\epsilon$, we derive a perturbed $R$-matrix.

Consider the cluster algebra associated with the following quiver.
\[
  \mathcal{D}=
\vcenter{\hbox{\begin{tikzpicture}
  \node[rectangle, draw, fill=white] (a) at (0,0) {};
  \node[below] at (a.south) {$1$};
  \node[circle, draw, fill=white] (b) at (1,-1) {};
  \node[below] at (b.south) {$2$};
  \node[rectangle, draw, fill=white] (c) at (2,0) {};
  \node[below] at (c.south) {$4$};
  \node[circle, draw, fill=white] (d) at (1,1) {};
  \node[below] at (d.south) {$3$};

  \draw[-{Triangle}, line width=1pt] (a) -- (b);
  \draw[-{Triangle}, line width=1pt] (b) -- (c);
  \draw[-{Triangle}, line width=1pt] (c) -- (d);
  \draw[-{Triangle}, line width=1pt] (d) -- (a);
\end{tikzpicture}}}
\]
Let $\{X_1,\dots,X_4\}$ denote the generators of the quantum torus algebra related to $\widetilde{\mathcal{D}}$ as defined in Section \ref{sec:clusterrealization}. Define
\[
  \mathbf{X}^{\widetilde{\mathcal{D}}} := \psi (X_{1},X_{2},X_{3},X_{4})
  \]
with $\psi$ the homomorphism given in Proposition \ref{pro:perthom}. This yields
\begin{equation}
  \label{eq:clustervar}
\begin{aligned}
 & \mathbf{X}^{\widetilde{\mathcal{D}}}_{1} = i T^{-1} \mathbf{x},  &&  \mathbf{X}^{\widetilde{\mathcal{D}}}_{2} =-q T^{2} e^{2h \epsilon\mathbf{x} \mathbf{p}}, \\
 & \mathbf{X}^{\widetilde{\mathcal{D}}}_{3}= -q^{-1} e^{-2h\epsilon \mathbf{x} \mathbf{p}}, && \mathbf{X}^{\widetilde{\mathcal{D}}}_{4} =  i \mathbf{x}^{-1},
\end{aligned}
\end{equation}
with $q = e^{h \epsilon}$. Now consider the quantum cluster algebra related to the quiver
\[
\mathcal{Z}=\vcenter{\hbox{
\begin{tikzpicture}
  \node[rectangle, draw, fill=white] (a) at (0,0) {};
  \node[below] at (a.south) {$1$};
  \node[circle, draw, fill=white] (b) at (1,-1) {};
  \node[below] at (b.south) {$2$};
  \node[circle, draw, fill=white] (c) at (2,0) {};
  \node[below] at (c.south) {$4$};
  \node[circle, draw, fill=white] (d) at (1,1) {};
  \node[below] at (d.south) {$3$};

  \node[circle, draw, fill=white] (5) at (3,-1) {};
  \node[below] at (5.south) {$5$};
  \node[circle, draw, fill=white] (6) at (3,1) {};
  \node[below] at (6.south) {$6$};
  \node[rectangle, draw, fill=white] (7) at (4,0) {};
  \node[below] at (7.south) {$7$};
  \draw[-{Triangle}, line width=1pt] (a) -- (b);
  \draw[-{Triangle}, line width=1pt] (b) -- (c);
  \draw[-{Triangle}, line width=1pt] (c) -- (d);
  \draw[-{Triangle}, line width=1pt] (d) -- (a);
  \draw[-{Triangle}, line width=1pt] (c) -- (5);
  \draw[-{Triangle}, line width=1pt] (5) -- (7);
  \draw[-{Triangle}, line width=1pt] (7) -- (6);
  \draw[-{Triangle}, line width=1pt] (6) -- (c);
\end{tikzpicture}}}
\]
Recall the embedding of Example \ref{ex:amalg}. By setting
\[
\mathbf{X}^{\mathcal{Z}} = (\psi \otimes \psi)(X_{1} \otimes 1, X_{2} \otimes 1, X_{3} \otimes 1, X_{4} \otimes X_{1}, 1 \otimes X_{2}, 1 \otimes X_{3}, 1 \otimes X_{4}),
\]
we obtain
\begin{equation}
  \label{eq:clustervariables}
\begin{aligned}
 & \mathbf{X}^{\mathcal{Z}}_{1} = i T^{-1} \mathbf{x}_{1},  &&  \mathbf{X}^{\mathcal{Z}}_{2} =-q T^{2} e^{2 h \epsilon \mathbf{x}_{1} \mathbf{p}_{1}}, \\
& \mathbf{X}^{\mathcal{Z}}_{3}= -q^{-1} e^{-2 h \epsilon \mathbf{x}_{1} \mathbf{p}_{1}}, && \mathbf{X}^{\mathcal{Z}}_{4} =  -T^{-1} \mathbf{x}_{1}^{-1} \mathbf{x}_{2}, \\
 & \mathbf{X}^{\mathcal{Z}}_{5} = -q T^{2} e^{2 h \epsilon \mathbf{x}_{2} \mathbf{p}_{2}},  &&  \mathbf{X}^{\mathcal{Z}}_{6} =-q^{-1} e^{-2 h \epsilon \mathbf{x}_{2} \mathbf{p}_{2}}, \\
& \mathbf{X}^{\mathcal{Z}}_{7}= i \mathbf{x}_{2}^{-1},
\end{aligned}
\end{equation}
with $q = e^{h \epsilon}$ and $T = e^{h t}$.

\begin{notation}
  \label{not:R}
  Let $\alpha \in \mathbb{C}_\epsilon(t) \llbracket h \rrbracket$ as in Theorem \ref{thm:rmatrixburau}. Denote
\[
  R := \alpha^{-1}(\eta \otimes \eta)(\mathcal{R}) \quad \text{ and } \quad R_{0} := R |_{\epsilon = 0}.
\]
\end{notation}

\begin{proposition}
  \label{pro:usefulidentity}
  The following two identities hold:
  \begin{enumerate}[\normalfont i)]
    \item\label{item:1} $P(R_{0} \mathbf{x}_1 R_{0}^{-1}) = \left( \mathbf{x}_{1} - \mathbf{x}_{1} T^{2} + \mathbf{x}_{2} T \right);$
          \item
    $ P(R_0 \mathbf{x}_2^{-1} R_0^{-1})= T^{-1}\mathbf{x}_{1}^{-1}$.
  \end{enumerate}
\end{proposition}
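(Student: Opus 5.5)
The plan is to reduce both identities to the conjugation formula of Proposition~\ref{pro:phixandp}. First I identify $R_0$ with a Gaussian. By Theorem~\ref{thm:rmatrixburau} and Notation~\ref{not:R}, $R=\alpha^{-1}(\eta\otimes\eta)(\mathcal R)=\varphi(A)+O(\epsilon)$ with
\[
A=\begin{pmatrix} T & 0\\ 1-T^{2} & T\end{pmatrix}, \qquad T=e^{ht}.
\]
Setting $\epsilon=0$ gives $R_0=\varphi(A)\in A_2(\mathbb{C}(t))\llbracket h\rrbracket$. The Weyl--Heisenberg relations $[\mathbf p_i,\mathbf x_j]=\delta_{ij}$ do not involve $\epsilon$, so this specialization is harmless, and the scalar $\alpha$ plays no role once it has been divided out. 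Moreover $A=\exp(h\,\cdot)$ of a matrix in $\mathrm{Mat}_2(\bk\llbracket h\rrbracket)$: its diagonal is $e^{ht}$ and its off-diagonal entry $1-T^2$ is divisible by $h$. Hence $A\in\mathcal A_2$, and Proposition~\ref{pro:phixandp} applies.

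For (i), Proposition~\ref{pro:phixandp}(i) with $j=1$ gives
\[
R_0\mathbf x_1R_0^{-1}=\mathbf x_1A_{11}+\mathbf x_2A_{21}=T\mathbf x_1+(1-T^2)\mathbf x_2 .
\]
Applying $P$ swaps the tensor legs, i.e.\ $\mathbf x_1\leftrightarrow\mathbf x_2$. This gives $T\mathbf x_2+(1-T^2)\mathbf x_1=\mathbf x_1-\mathbf x_1T^2+\mathbf x_2T$, as claimed.

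For (ii), the same proposition with $j=2$ gives
\[
R_0\mathbf x_2R_0^{-1}=\mathbf x_1A_{12}+\mathbf x_2A_{22}=T\mathbf x_2 ,
\]
since $A_{12}=0$. Conjugation by $R_0$ is an algebra automorphism, and I extend it to the localization $A_2[\mathbf x_2^{-1}]$, which is where $\mathbf x_2^{-1}$ lives (cf.\ Proposition~\ref{pro:perthom}). Such an automorphism sends inverses to inverses, so
\[
R_0\mathbf x_2^{-1}R_0^{-1}=(T\mathbf x_2)^{-1}=T^{-1}\mathbf x_2^{-1}.
\]
Applying $P$ then yields $T^{-1}\mathbf x_1^{-1}$.

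The only step needing care is this passage to the localization. I expect it to be routine: $T\mathbf x_2$ is invertible in $A_2[\mathbf x_2^{-1}]$, and the inner automorphism $\mathrm{Ad}_{R_0}$ extends uniquely to that localization. There is therefore no substantial obstacle beyond correctly identifying $R_0$ with $\varphi(A)$.
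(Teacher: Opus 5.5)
Your proof is correct, but it runs in the opposite direction from the paper's.

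\textbf{The paper's route.} The paper does not use the Gaussian form of $R_0$ here. It invokes the cluster realization of $P\circ\mathrm{Ad}_{\mathcal R}$ (Theorem~\ref{thm:Rmatrixcluster}) and computes the images of $\mathbf X_1^{\mathcal Z}$ and $\mathbf X_7^{\mathcal Z}$ under the mutation sequence. These are $\mathbf X_1^{\mathcal Z}(1+\mathbf X_2^{\mathcal Z}+\mathbf X_2^{\mathcal Z}\mathbf X_4^{\mathcal Z})$ and $(1+\mathbf X_6^{\mathcal Z}+\mathbf X_4^{\mathcal Z}\mathbf X_6^{\mathcal Z})\mathbf X_7^{\mathcal Z}$. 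It then substitutes the $\epsilon=0$ values $\mathbf X_2^{\mathcal Z}=-T^2$, $\mathbf X_6^{\mathcal Z}=-1$ and $\mathbf X_4^{\mathcal Z}=-T^{-1}\mathbf x_1^{-1}\mathbf x_2$. The identification $R_0=\varphi(A)$ is recorded only afterwards, in Corollary~\ref{cor:clusterR0}, as consistent with Theorem~\ref{thm:rmatrixburau}.

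\textbf{Your route.} You take $R_0=\varphi(A)$ directly from Theorem~\ref{thm:rmatrixburau} and read both identities off Proposition~\ref{pro:phixandp}.

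\textbf{What yours buys.} It is shorter and stays inside the Weyl--Heisenberg calculus of Section~\ref{sec:heisenbergalg}. It avoids the imported Schrader--Shapiro theorem and its loose ``$+O(\epsilon)$'' passage to the classical mutation. It handles $\mathbf x_2^{-1}$ cleanly, as the inverse of $R_0\mathbf x_2R_0^{-1}=T\mathbf x_2$. It also gives the action on the $\mathbf p_i$ (Lemma~\ref{lem:someidentities}) at no extra cost.

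\textbf{What the paper's buys.} It is the point of the section: the Burau-type formulas come out of the mutation sequence alone, independently of the quantum-group formula for $\mathcal R$. The same mutation expressions, with their $q$-factors kept, are what get expanded to first order in Theorem~\ref{thm:pertrmatrix}.

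If you feed your version into Proposition~\ref{pro:CGamma}, you still need Theorem~\ref{thm:Rmatrixcluster} to pass from $\mathrm{Ad}_{R_0}$ to $\mathcal C(\sigma_1)$. So your route defers the cluster input rather than eliminating it.
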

\begin{proof}
  From Theorem \ref{thm:Rmatrixcluster}, we know that
  \[
    P \circ \mathrm{Ad}_{R}  = \sigma_{3,5} \circ \sigma_{2,5} \circ \sigma_{3,6} \circ \mu_{4} \circ \mu_{6} \circ \mu_{2} \circ \mu_{4} + O(\epsilon).
    \]
    In particular, this implies
    \begin{align*}
      P(R_{0} \mathbf{X}_{1}^{\mathcal{Z}} R_{0}^{-1})  &= (\sigma_{3,5} \circ \sigma_{2,5} \circ \sigma_{3,6} \circ \mu_{4} \circ \mu_{6} \circ \mu_{2} \circ \mu_{4}) (\mathbf{X}_{1}^{\mathcal{Z}})\\
      &= \mathbf{X}^{\mathcal{Z}}_{1}(1+\mathbf{X}^{\mathcal{Z}}_{2} + \mathbf{X}^{\mathcal{Z}}_{2} \mathbf{X}^{\mathcal{Z}}_{4}),
    \end{align*}
which yields
\begin{align*}
 P(R_{0}  \mathbf{x}_{1} R_{0}^{-1}) = \left( \mathbf{x}_{1} - \mathbf{x}_{1} T^{2} + \mathbf{x}_{2} T \right).
\end{align*}
Moreover, we have
    \begin{align*}
      P(R \mathbf{X}_{7}^{\mathcal{Z}} R^{-1})  &= (\sigma_{3,5} \circ \sigma_{2,5} \circ \sigma_{3,6} \circ \mu_{4} \circ \mu_{6} \circ \mu_{2} \circ \mu_{4}) (\mathbf{X}_{7}^{\mathcal{Z}})\\
      &= (1 + \mathbf{X}_{6}^{\mathcal{Z}} + \mathbf{X}^{\mathcal{Z}}_{4}\mathbf{X}^{\mathcal{Z}}_{6})\mathbf{X}^{\mathcal{Z}}_{7},
    \end{align*}
which implies
\begin{align*}
  P(R_{0}   \mathbf{x}_{2}^{-1} R_{0}^{-1}) =  T^{-1}\mathbf{x}_{1}^{-1}. 
\end{align*}
This proves the proposition.
\end{proof}

The previous proposition is consistent with Theorem \ref{thm:rmatrixburau}, which we summarize in the following corollary.
\begin{corollary}
  \label{cor:clusterR0}
  Let $\mathcal{R}$ be the universal $R$-matrix of $\widetilde{\mathfrak{D}}$ and
  \[
    \varphi \colon \mathcal{A}_{2}(\mathbb{C}(t)) \to A_{2}(\mathbb{C}(t))
  \]
  be the monoid homomorphism as in Notation \ref{def:monhom}. The following equality holds

  \[
    R_{0} =\varphi \left(
\begin{pmatrix}
                                         T &  0 \\
                                         1-T^{2} & T
\end{pmatrix} \right).
\]
\end{corollary}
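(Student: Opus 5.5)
The plan is to show that $R_0$ and $\varphi(A)$, with $A=\begin{pmatrix} T & 0\\ 1-T^2 & T\end{pmatrix}$, induce the same automorphism by conjugation on the generators $\mathbf{x}_1,\mathbf{x}_2,\mathbf{p}_1,\mathbf{p}_2$. Equality will then follow from a centrality/normalization argument. The most direct route is simply Theorem \ref{thm:rmatrixburau}: by Notation \ref{not:R}, $R=\alpha^{-1}(\eta\otimes\eta)(\mathcal{R})=\varphi(A)+O(\epsilon)$, so setting $\epsilon=0$ gives $R_0=\varphi(A)$ at once. Since the corollary is presented as a cluster-theoretic reinterpretation, I will also give the independent argument built on Proposition \ref{pro:usefulidentity}.

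\emph{Step 1: compare conjugation actions.} By Proposition \ref{pro:phixandp}, $\varphi(A)\mathbf{x}_j\varphi(A)^{-1}=\mathbf{x}_iA_{ij}$. This gives $\mathrm{Ad}_{\varphi(A)}\mathbf{x}_1=T\mathbf{x}_1+(1-T^2)\mathbf{x}_2$ and $\mathrm{Ad}_{\varphi(A)}\mathbf{x}_2=T\mathbf{x}_2$. Applying $P$, we obtain $\mathbf{x}_1-T^2\mathbf{x}_1+T\mathbf{x}_2$ and $T\mathbf{x}_1$, which agrees with Proposition \ref{pro:usefulidentity}(i). Inverting the second gives $P(\varphi(A)\mathbf{x}_2^{-1}\varphi(A)^{-1})=T^{-1}\mathbf{x}_1^{-1}$, which is item (ii). Hence $\mathrm{Ad}_{R_0}$ and $\mathrm{Ad}_{\varphi(A)}$ agree on $\mathbf{x}_1,\mathbf{x}_2$.

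\emph{Step 2: the $\mathbf{p}$'s.} For the momenta I will use the mutation formula for $\mathbf{X}_2^{\mathcal Z},\mathbf{X}_3^{\mathcal Z},\mathbf{X}_5^{\mathcal Z},\mathbf{X}_6^{\mathcal Z}$. These are exponentials of $2h\epsilon\,\mathbf{x}_i\mathbf{p}_i$, so to extract the $\epsilon^0$ information one must take first order in $\epsilon$. Alternatively, and more cleanly, I will argue that any automorphism preserving the relations $[\mathbf{p}_i,\mathbf{x}_j]=\delta_{ij}$ and acting linearly on the $\mathbf{x}$'s by a matrix $A$ must send $\mathbf{p}$ to $(A^{-1})^\top\mathbf{p}$ plus a function of $\mathbf{x}$. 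The function-of-$\mathbf{x}$ ambiguity is excluded by the Euler-type degree grading: the mutation formulas are homogeneous, with $\mathbf{x}\mathbf{p}$ of degree zero. Combined with Proposition \ref{pro:phixandp}(ii), this identifies $\mathrm{Ad}_{R_0}=\mathrm{Ad}_{\varphi(A)}$ on all of $A_2$.

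\emph{Step 3: conclude.} $R_0\varphi(A)^{-1}$ is central in $A_2(\mathbb{C}(t))\llbracket h\rrbracket$, hence a scalar. Comparing the constant terms in the normal-ordered expansion, via the normalization by $\alpha$ in Notation \ref{not:R}, shows this scalar is $1$.

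The main obstacle is Step 2 together with the scalar normalization. The cluster data of Proposition \ref{pro:usefulidentity} only directly controls the $\mathbf{x}$'s, so the normalization genuinely relies on the choice of $\alpha$ in Theorem \ref{thm:rmatrixburau}. For that reason I would ultimately rest the scalar identification on that theorem.
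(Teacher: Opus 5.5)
Your direct route is the paper's own argument. The corollary is given no separate proof there: it is Theorem \ref{thm:rmatrixburau} rewritten through Notation \ref{not:R} and evaluated at $\epsilon=0$, with Proposition \ref{pro:usefulidentity} serving only as a consistency check, so your proposal is correct. Your supplementary cluster argument is unnecessary, and beyond the scalar you already flag, its grading step in Step 2 fails: once $\mathbf{x}^{-1}$ is allowed, a factor $e^{G(\mathbf{x})}$ with $G$ of degree zero, for example $G=h\,\mathbf{x}_1^{-1}\mathbf{x}_2$ (a multiple of $\mathbf{X}^{\mathcal Z}_4$), commutes with $\mathbf{x}_1,\mathbf{x}_2$ and is homogeneous of degree zero, yet conjugation by it sends $\mathbf{p}_1\mapsto\mathbf{p}_1+h\,\mathbf{x}_1^{-2}\mathbf{x}_2$. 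That route would therefore genuinely need the order-$\epsilon$ mutation data for $\mathbf{X}^{\mathcal Z}_2,\mathbf{X}^{\mathcal Z}_3,\mathbf{X}^{\mathcal Z}_5,\mathbf{X}^{\mathcal Z}_6$, or a priori polynomiality of $R_0$.
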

Proposition \ref{pro:usefulidentity} suggests that cluster algebras contain all the information needed to derive the Alexander polynomial of a given knot from its associated cluster pattern (described in Definition~\ref{def:clusterpattern}). To make this explicit, we introduce a map resulting from the $n$-fold amalgamation of the cluster variables in \eqref{eq:clustervar} at $\epsilon = 0$.
\begin{notation}
Let $n\ge 1$.
Consider the map
\[
\Gamma_n \colon (\mathbb{C}^\times)^n \longrightarrow \mathbb{C}(T)^{3n+1}
\]
defined as follows
\[
(\Gamma_n(\mathbf{x}))_1 = iT^{-1}\mathbf{x}_1,\qquad
(\Gamma_n(\mathbf{x}))_2 = -T^2,\qquad
(\Gamma_n(\mathbf{x}))_3 = -1,
\]
and
\[
(\Gamma_n(\mathbf{x}))_{3n+1} = i \mathbf{x}_n^{-1};
\]
and indices $i\in\{1,\dots,3n+1\}\setminus\{1,2,3,3n+1\}$, define
\[
(\Gamma_n(\mathbf{x}))_i =
\begin{cases}
 -T^{-1} \mathbf{x}_{(i-1)/3}^{-1}\mathbf{x}_{(i+2)/3},
   & \text{if } i\equiv 1 \pmod{3},\\[4pt]
 -T^2,
   & \text{if } i\equiv 2 \pmod{3},\\[4pt]
 -1,
   & \text{if } i\equiv 0 \pmod{3}.
\end{cases}
\]
Note that the map $\Gamma_n$ is injective.
\end{notation}
  Recall the definition of the Burau representation.
Denote by
\[
\rho_n \colon \mathcal{B}_n \longrightarrow \mathrm{GL}_n(\mathbb{Z}[T,T^{-1}])
\]
the \emph{unreduced Burau representation}, determined on generators by
\[
\rho_n(\sigma_i)
=
I_{i-1}\oplus
\begin{pmatrix}
1-T^2 & T^2\\
1 & 0
\end{pmatrix}
\oplus I_{n-i-1},
\qquad i=1,\dots,n-1.
\]
Through conjugation with the matrix $\mathrm{diag}(1,T,T^2,\dots,T^{n-1})$, the Burau representation is equivalent to the representation
\[
\tilde{\rho}_n \colon \mathcal{B}_n \longrightarrow \mathrm{GL}_n(\mathbb{Z}[T,T^{-1}])
\]
defined by
\[
\tilde{\rho}_n(\sigma_i)
=
I_{i-1}\oplus
\begin{pmatrix}
1-T^2 & T\\
T & 0
\end{pmatrix}
\oplus I_{n-i-1},
\qquad i=1,\dots,n-1.
\]

The following proposition connects cluster algebra theory to the Burau representation.
\begin{proposition}
  \label{pro:CGamma}
Let $\mathcal{K}$ be a knot, with braid representative $\beta \in \mathcal{B}_n$. Let $\mathcal{C}$ be as in Definition \ref{def:clusterpattern}. Then
  \[
\mathcal{C}(\beta) \circ \Gamma_n = \Gamma_n \circ \tilde{\rho}_n(\beta).
  \]

\end{proposition}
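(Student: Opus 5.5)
Both sides of the identity are compatible with composition of braids. The left side $\mathcal{C}$ is a homomorphism by definition, and $\tilde\rho_n$ is a representation. So it suffices to prove the identity for a single generator $\sigma_i$ and for its inverse $\sigma_i^{-1}$.

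First I fix the composition convention. For $\beta=\beta_1\beta_2$, if $\mathcal{C}(\beta_j)\circ\Gamma_n=\Gamma_n\circ\tilde\rho_n(\beta_j)$ for $j=1,2$, then
\[
\mathcal{C}(\beta_1\beta_2)\circ\Gamma_n=\mathcal{C}(\beta_1)\circ\Gamma_n\circ\tilde\rho_n(\beta_2)=\Gamma_n\circ\tilde\rho_n(\beta_1\beta_2).
\]
This is consistent as long as $\mathcal{C}$ and $\tilde\rho_n$ compose in the same order, which I will check against the ordering in Definition~\ref{def:clusterpattern}. For the inverse generator, the case $\sigma_i$ gives $\mathcal{C}(\sigma_i)\circ\Gamma_n=\Gamma_n\circ\tilde\rho_n(\sigma_i)$. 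Precomposing with $\tilde\rho_n(\sigma_i)^{-1}$ and postcomposing with $\mathcal{C}(\sigma_i)^{-1}$ yields the inverse case. This uses that $\tilde\rho_n(\sigma_i)$ is invertible, and that $\mathcal{C}(\sigma_i)^{-1}$ is a birational map defined on the image of $\Gamma_n$, so no denominators vanish there.

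The main step is the generator computation $\stackrel{i}{\mathrm{R}}\circ\Gamma_n=\Gamma_n\circ\tilde\rho_n(\sigma_i)$. The operator $\stackrel{i}{\mathrm{R}}$ only touches the indices $3i-2,\dots,3i+4$. The first and last of these enter only as neighbours in the mutation formulas, and they transform like $X_1$ and $X_7$ in the $n=2$ formula for $R^{\mathcal{X}}$. So I reduce to the local two-punctured-disk formula for $R^{\mathcal{X}}$ displayed in Section~\ref{sec:braidingoperator}, with relabelled indices. I then substitute the values of $\Gamma_n$:
\[
X_2=X_5=-T^2,\quad X_3=X_6=-1,\quad X_4=-T^{-1}\mathbf{x}_i^{-1}\mathbf{x}_{i+1},
\]
\[
X_1=-T^{-1}\mathbf{x}_{i-1}^{-1}\mathbf{x}_i\ (\text{or } iT^{-1}\mathbf{x}_1),\qquad X_7=-T^{-1}\mathbf{x}_{i+1}^{-1}\mathbf{x}_{i+2}\ (\text{or } i\mathbf{x}_n^{-1}).
\]
Here the key quantity is $D=1+X_2+X_6+X_2X_6+X_2X_4X_6$, which on the image of $\Gamma_n$ equals $1+T^2X_4$. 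I will verify four things.

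\begin{itemize}
\item The factor $1+X_2+X_2X_4=1-T^2+T\mathbf{x}_i^{-1}\mathbf{x}_{i+1}$ turns the new $X_1$ into the image of $\mathbf{x}'_i=(1-T^2)\mathbf{x}_i+T\mathbf{x}_{i+1}$. This is Proposition~\ref{pro:usefulidentity} with $\epsilon=0$.
\item The factor $1+X_6+X_4X_6=-X_4$ turns the new $X_7$ into the expression with $\mathbf{x}'_{i+1}=T\mathbf{x}_i$. This is the second identity of Proposition~\ref{pro:usefulidentity}.
\item The middle entries return to $-T^2$ and $-1$, because $D$ cancels against the numerators.
\item The new $X_4$ equals $-T^{-1}(\mathbf{x}'_i)^{-1}\mathbf{x}'_{i+1}$.
\end{itemize}

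Reading off the map $(\mathbf{x}_i,\mathbf{x}_{i+1})\mapsto((1-T^2)\mathbf{x}_i+T\mathbf{x}_{i+1},\,T\mathbf{x}_i)$ gives exactly the block $\begin{pmatrix}1-T^2&T\\T&0\end{pmatrix}$ of $\tilde\rho_n(\sigma_i)$. Coordinates outside the window are unchanged on both sides.

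I expect the main obstacle to be the boundary cases and the frozen-type neighbours. For $i=1$ or $i=n-1$, the outer variables carry the scalar $i$ instead of $-T^{-1}$, and one must confirm that the relative sign and power of $T$ still work out. For interior $i$, the neighbouring variables $X_{3i-2}$ and $X_{3i+4}$ are themselves ratios involving $\mathbf{x}_{i-1}$ and $\mathbf{x}_{i+2}$, and only their $\mathbf{x}_i$ and $\mathbf{x}_{i+1}$ parts should change. I would handle these cases by writing out the substitution directly, or by checking it symbolically in the Mathematica implementation of the appendix.
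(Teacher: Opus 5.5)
Your route is correct but differs from the paper's. The paper does not substitute $\Gamma_2$ into all seven components of $R^{\mathcal{X}}$. For $n=2$ it notes that $\mathbf{X}^{\mathcal{Z}}|_{\epsilon=0}=\Gamma_2(\mathbf{x}_1,\mathbf{x}_2)$ and takes from Proposition~\ref{pro:usefulidentity} how $P\circ\mathrm{Ad}_{R_0}$ moves $\mathbf{x}_1$ and $\mathbf{x}_2^{-1}$. It then relies, implicitly, on multiplicativity of $P\circ\mathrm{Ad}_{R_0}$ to get $P(\mathrm{Ad}_{R_0}\Gamma_2(\mathbf{x}))=\Gamma_2(\tilde\rho_2(\sigma_1)\mathbf{x})$. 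Finally it transports this to $\mathcal{C}(\sigma_1)$ through the Schrader--Shapiro identification (Theorem~\ref{thm:Rmatrixcluster}). General $n$ is dismissed as ``analogous'', and inverse generators are not discussed.

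Your direct substitution is more elementary and self-contained. It needs no quantum group, no Weyl--Heisenberg representation and no classical limit $\epsilon\to0$. It also makes the reduction for general $n$ explicit and handles $\sigma_i^{-1}$. The price is that you must check all seven coordinates by hand, whereas in the paper's route the middle coordinates come for free from multiplicativity.

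One computation must be fixed. On the image of $\Gamma_n$ you have $1+X_2+X_6+X_2X_6=(1+X_2)(1+X_6)=0$, because $X_6=-1$. Hence $D=X_2X_4X_6=T^2X_4$, not $1+T^2X_4$. With your stated value the third bullet is false; for instance the new $X_3=D/(X_2X_4)$ would be $-(1+T^2X_4)/(T^2X_4)\neq-1$. With $D=T^2X_4$ the four middle entries become $-T^2,-1,-T^2,-1$, as you claim. This collapse of $D$ to a monomial is exactly why the special values $-T^2$ and $-1$ make the image of $\Gamma_n$ invariant.

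The boundary cases you flag as the main obstacle are automatic. The outer coordinates $X_{3i-2}$ and $X_{3i+4}$ are only multiplied by $1+X_{3i-1}+X_{3i-1}X_{3i+1}$ and $1+X_{3i+3}+X_{3i+1}X_{3i+3}$ respectively. These factors depend only on the window, so the prefactors ($iT^{-1}$, $i$, $-T^{-1}\mathbf{x}_{i-1}^{-1}$, $-T^{-1}\mathbf{x}_{i+2}$) simply ride along.

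For the locality claim, it suffices to observe two things. None of $3i-1,\dots,3i+3$ is adjacent to a vertex outside the window, so mutations there never create arrows to outside vertices. And the full subquiver on $\{3i-2,\dots,3i+4\}$ is isomorphic to $\mathcal{Z}$.
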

\begin{proof}
  First, we check the claim on generators. Also, let us start with $ n=2$.  Consider the cluster variables as in \eqref{eq:clustervar}. Through amalgamation and the embedding as in \eqref{eq:embedding}, these extend to $\mathbf{X}^{\mathcal{Z}}$ as in \eqref{eq:clustervariables}. Note that $\mathbf{X}^{\mathcal{Z}}|_{\epsilon = 0} = \Gamma_2(\mathbf{x}_1,\mathbf{x}_2)$.   With Proposition \ref{pro:usefulidentity}, we obtain
  \[
    P(\mathrm{Ad}_{R_0} (\Gamma_2(\mathbf{x}_1,\mathbf{x}_2))) = \Gamma_2(\tilde{\rho}_2(\sigma_1)(\mathbf{x}_1,\mathbf{x}_2)).
  \]
Hence, using Theorem \ref{thm:Rmatrixcluster} we conclude that $\mathcal{C}(\sigma_1)(\Gamma_2(\mathbf{x}_1, \mathbf{x}_2)) = \Gamma_2(\tilde{\rho}_2(\sigma_1)(\mathbf{x}_1,\mathbf{x}_2))$.

This result is not restricted to the Dehn twist in a two-punctured disk. 
Consider again the cluster variables defined in \eqref{eq:clustervar}. 
After $n$-fold amalgamation and applying the embedding in \eqref{eq:embedding}, 
these variables specialize at $\epsilon = 0$ to $\Gamma_n(\mathbf{x})$. 
By a computation analogous to the one above, we obtain
\[
\mathcal{C}(\sigma_i) \circ \Gamma_n 
= 
\Gamma_n \circ \tilde{\rho}_n(\sigma_i), 
\qquad i \in \{1,\dots,n-1\}.
\]

Lastly, note that
\[
\mathcal{C}(\sigma_i \cdot \sigma_j) \circ \Gamma_n 
= \mathcal{C}(\sigma_i) \circ \mathcal{C}(\sigma_j) \circ \Gamma_n =  \mathcal{C}(\sigma_i) \circ \Gamma_n \circ \tilde{\rho}_n(\sigma_j) = \Gamma_n \circ \tilde{\rho}_n( \sigma_i \cdot \sigma_j).
\]
This proves the proposition.
\end{proof}
Recall the result of Example \ref{ex:clustertrefoil}. There we suggested that the classical cluster structure can be used to derive the Alexander polynomial corresponding to a given knot by setting the cluster variables to specific values. This shall be rigorously proven in the following theorem. 
\begin{theorem}
  \label{thm:clusterburau}
Let $\mathcal K$ be an oriented knot represented as the closure of a braid
\[
\beta = \sigma_{k_1}^{\varepsilon_1}\sigma_{k_2}^{\varepsilon_2}\cdots
\sigma_{k_m}^{\varepsilon_m}\ \in\ \mathcal B_n,
\qquad \varepsilon_j\in\{\pm1\}.
\]
Define $\boldsymbol{y}[1] = \Gamma_n(\mathbf{x})$. The condition 
\[
\boldsymbol y[1] = \boldsymbol y[m+1] := \mathcal{C}(\beta) \boldsymbol y[1],
\]
defines a matrix $B(T)$ such that
\[
B(T)(\mathbf x_1,\ldots,\mathbf x_{n}) = 0.
\]

Let $\hat B(T)$ be the $(n-1)\times(n-1)$-matrix obtained from $B(T)$ by deleting
any row and the corresponding column. Then
\[
\Delta_{\mathcal K}(T^2)\ \doteq\ \det \hat B(T),
\]
where $\Delta_{\mathcal K}(T)$ is the Alexander polynomial of $\mathcal K$
and $\doteq$ denotes equality up to multiplication by a unit $\pm T^k$.
\end{theorem}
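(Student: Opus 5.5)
The plan is to reduce the statement to the classical fact that the reduced Burau representation computes the Alexander polynomial, using Proposition~\ref{pro:CGamma} as the bridge.

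\textbf{Step 1: translate the periodicity condition.} By Proposition~\ref{pro:CGamma} we have
\[
\boldsymbol y[m+1]=\mathcal C(\beta)\,\Gamma_n(\mathbf x)=\Gamma_n\bigl(\tilde\rho_n(\beta)\mathbf x\bigr).
\]
Hence the condition $\boldsymbol y[1]=\boldsymbol y[m+1]$ reads $\Gamma_n(\mathbf x)=\Gamma_n(\tilde\rho_n(\beta)\mathbf x)$. The map $\Gamma_n$ is injective: the first coordinate recovers $\mathbf x_1$, and the coordinates $3j+1$ recover the ratios $\mathbf x_{j+1}/\mathbf x_j$. So the condition is equivalent to the linear system
\[
\bigl(\tilde\rho_n(\beta)-I_n\bigr)\mathbf x=0 .
\]
We therefore take $B(T):=\tilde\rho_n(\beta)-I_n$, up to an overall sign, exactly as in Example~\ref{ex:clustertrefoil}. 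One small point needs care: $\Gamma_n$ is defined on $(\mathbb C^\times)^n$, while $\tilde\rho_n(\beta)\mathbf x$ is only a generic vector. Since all identities are identities of rational functions in $\mathbf x$, working on a Zariski-dense open set suffices.

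\textbf{Step 2: pass to the standard Burau representation.} Since $\tilde\rho_n=D\rho_nD^{-1}$ with $D=\mathrm{diag}(1,T,\dots,T^{n-1})$, we get
\[
B(T)=D\bigl(\rho_n(\beta)-I\bigr)D^{-1}.
\]
Deleting row $i$ and column $i$ of $B$ corresponds to deleting row $i$ and column $i$ of $\rho_n(\beta)-I$, conjugated by the diagonal matrix with the $i$-th entry removed. So the principal minors of $B$ and of $\rho_n(\beta)-I$ agree exactly. Moreover, $\rho_n$ is the classical unreduced Burau representation in the variable $s=T^2$: its generator block is $\begin{pmatrix}1-s&s\\1&0\end{pmatrix}$.

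\textbf{Step 3: invoke the classical theorem.} The relevant facts are these.
\begin{itemize}
\item[(a)] The unreduced Burau matrix fixes a vector: here $(1,\dots,1)^\top$ is a right eigenvector with eigenvalue $1$, since each row sums to $1$.
\item[(b)] It also has an invariant row vector $w=(1,s,\dots,s^{n-1})$.
\item[(c)] After the change of basis $e_j\mapsto e_j-e_{j+1}$, $\rho_n$ becomes block triangular, with the $(n-1)$-dimensional reduced Burau block $\bar\rho(\beta)$ and a trivial $1\times1$ block.
\end{itemize}
The classical formula
\[
\det\bigl(\bar\rho(\beta)-I\bigr)\doteq \frac{1+s+\dots+s^{n-1}}{\cdot}\ \Delta_{\mathcal K}(s)\cdot\frac{s-1}{s^n-1}
\]
then gives the Alexander polynomial. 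The equality of principal minors follows from a standard linear-algebra lemma: for a matrix $M$ with $M\mathbf 1=0$ and $wM=0$, all principal $(n-1)$-minors equal $\frac{w_i}{\sum_j w_j}\cdot c$, where $c$ is a fixed quantity (the adjugate of $M$ has rank $\le1$ and equals $c\,\mathbf 1 w/\ldots$). This yields $\det \hat B\doteq w_i\,\Delta_{\mathcal K}(s)$, which is a unit multiple.

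\textbf{Main obstacle.} Rather than quote the reduced-Burau formula with its $(1+s+\dots+s^{n-1})$ factor, I would argue directly via Fox calculus. The matrix $\rho_n(\beta)-I$ is (a transpose of) the Fox Jacobian $\bigl(\partial(\beta(x_j)x_j^{-1})/\partial x_k\bigr)$ of the Wirtinger-type presentation of the closed braid group, abelianized by $x_k\mapsto s$. Deleting one row and one column gives a presentation matrix of the Alexander module whose determinant is $\Delta_{\mathcal K}(s)$ up to a unit. The key check is that deleting any row together with the \emph{same} column is legitimate. This follows from the adjugate-rank-one lemma above, since $\mathbf 1$ and $w$ have entries that are units. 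I expect verifying these kernel vectors and the unit-ness of the resulting factors to be the main bookkeeping, with everything finally evaluated at $s=T^2$.
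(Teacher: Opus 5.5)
Your proposal is correct and follows the paper's proof: Proposition~\ref{pro:CGamma} together with injectivity of $\Gamma_n$ turns the periodicity condition into $(\tilde\rho_n(\beta)-I_n)\mathbf{x}=0$. Conjugating by $\mathrm{diag}(1,T,\dots,T^{n-1})$ then reduces everything to the classical fact that the principal $(n-1)$-minors of $\rho_n(\beta)-I_n$ compute $\Delta_{\mathcal K}(T^2)$ up to units; the paper simply cites this, while you sketch it via Fox calculus and the rank-one adjugate argument with the unit-entry kernel vectors $\mathbf 1$ and $w$. The only blemish is your displayed reduced-Burau formula, which is garbled as written (its two factors cancel); the correct statement is $\det(I-\bar\rho(\beta))\doteq(1+s+\dots+s^{n-1})\,\Delta_{\mathcal K}(s)$, but your Fox-calculus route does not need it.
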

\begin{proof}
  The condition $\boldsymbol y[1] = \boldsymbol y[m+1]$ reduces to
  \[
    \Gamma_n(\mathbf{x})= \mathcal{C}(\beta) \circ \Gamma_n(\mathbf{x}) = \Gamma_n(\tilde{\rho}_n(\beta )(\mathbf{x}))
  \]
  with the use of Proposition \ref{pro:CGamma}. Using injectivity of $\Gamma_n$, we thus conclude that 
  \[
    (\tilde{\rho}_n(\beta) - \mathbb{I}_n  )\mathbf{x} = 0,
  \]
  where $\mathbb{I}_n$ denotes the $n\times n$ identity matrix. Since $\tilde{\rho}_n$ is equivalent to the Burau representation, the determinant of any minor obtained by deleting a row and a column from $\tilde{\rho}_n(\beta) - \mathbb{I}_n$ computes the Alexander polynomial of $\mathcal{K}$, up to multiplication by a unit $\pm T^k$.
\end{proof}
\begin{example}
  Let $\mathcal{K}$ be the figure-eight knot. It can be represented as the closure of a $3$-strand braid
  \[
  \beta_{\mathcal{K}} =   \sigma_1 \sigma_2^{-1} \sigma_1 \sigma_2^{-1}.
  \]
  Choose the initial variables
\[
  \boldsymbol{y}[1] = (i T^{-1}\mathbf{x}_1, -T^2,-1,- T^{-1}\mathbf{x}_2 \mathbf{x}_1^{-1},-T^2,-1,-T^{-1}\mathbf{x}_3 \mathbf{x}_2^{-1},-T^2,-1,i \mathbf{x}_3^{-1}).
\]
We get $\boldsymbol{y}[5] =  (\stackrel{2}{\mathrm{R}})^{-1}  \stackrel{1}{\mathrm{R}}  (\stackrel{2}{\mathrm{R}})^{-1}   \stackrel{1}{\mathrm{R}} \boldsymbol{y}[1]$,
which yields
\[
\begin{aligned}
\boldsymbol{y}[5] = \bigg(&\frac{i\left(( -1 + T^2 )^2 \mathbf{x}_1 + (T - T^3) \mathbf{x}_2 + \mathbf{x}_3\right)}{T},
-T^2,
-1, \\[4pt]
&\frac{\mathbf{x}_3 - T^2 (\mathbf{x}_1 + \mathbf{x}_3)}{T^4\left(( -1 + T^2 )^2 \mathbf{x}_1 + (T - T^3) \mathbf{x}_2 + \mathbf{x}_3\right)},
-T^2,
-1, \\[4pt]
&-1 + \frac{1}{T^2}
+ \frac{T^2\left(( -1 + T^2 ) \mathbf{x}_1 - T \mathbf{x}_2\right)}{T^2 \mathbf{x}_1 + ( -1 + T^2 ) \mathbf{x}_3},
-T^2,
-1, \\[4pt]
&-\frac{i T^4}{T^2( -1 + T^2 )^2 \mathbf{x}_1 - T^5 \mathbf{x}_2 - ( -1 + T^2 )^2 \mathbf{x}_3} \bigg).
\end{aligned}
\]
See also Appendix \ref{sec:mathematica}. The condition $\boldsymbol{y}[5] = \boldsymbol{y}[1]$ reduces to the equation
\[
\begin{pmatrix}
T^4 - 2T^2 & T - T^3 & 1 \\
T^{-1} & -1 & -T^{-3} + T^{-1} \\
2 - T^{-2} - T^2 & T & T^{-4} - 2T^{-2}
\end{pmatrix} \begin{pmatrix}\mathbf{x}_1 \\ \mathbf{x}_2 \\ \mathbf{x}_3\end{pmatrix} = \begin{pmatrix} 0 \\ 0 \\ 0 \end{pmatrix}.
\]
The determinant of the left upper $2 \times 2$ matrix is equal to $-T^4 + 3 T^2 -1  \doteq \Delta_{\mathcal{K}} (T^2)$.
\end{example}

Thus far, we have limited our discussion to the zeroth-order term and omitted higher-order terms. We now present a framework using quantum cluster algebras that allows us to construct an $\epsilon$-expanded R-matrix.
\begin{lemma}
  \label{lem:someidentities}
The following four identities hold:

\begin{enumerate}[\normalfont i)]
\item $R_{0}^{-1} \mathbf{x}_{1} R_{0} = T^{-1}\mathbf{x}_{1} + (1-T^{-2})\mathbf{x}_{2}$; 
\item $R_{0}^{-1} \mathbf{x}_{2} R_{0} = T^{-1}\mathbf{x}_{2}$; 
\item $R_{0}^{-1} \mathbf{p}_{1} R_{0} = T \mathbf{p}_{1}$; 
\item $R_{0}^{-1} \mathbf{p}_{2} R_{0} = (1-T^{2}) \mathbf{p}_{1} + T \mathbf{p}_{2}$.
\end{enumerate}
\end{lemma}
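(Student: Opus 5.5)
By Corollary~\ref{cor:clusterR0}, $R_0=\varphi(A)$ with
\[
A=\begin{pmatrix} T & 0\\ 1-T^2 & T\end{pmatrix},
\]
so the plan is to read all four identities off Proposition~\ref{pro:phixandp}, applied to $A^{-1}$ instead of $A$. Since $\varphi$ is a monoid homomorphism (Theorem~\ref{cor:identity}) and $\varphi(\mathds 1)=\mathcal N(1)=1$, we have $R_0^{-1}=\varphi(A)^{-1}=\varphi(A^{-1})$. Conjugating by $R_0^{-1}$ on the left is therefore the operation of Proposition~\ref{pro:phixandp} with $A^{-1}$ in place of $A$:
\[
R_0^{-1}\mathbf x_j R_0=\mathbf x_i (A^{-1})_{ij},\qquad R_0^{-1}\mathbf p_j R_0=\mathbf p_i A^{\top}_{ij}=\mathbf p_i A_{ji}.
\]
A small point to check is that $A^{-1}$ still lies in $\mathcal A_2$. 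Since $A\in\mathcal A_2$ (it is $e^{hB}$ with $B=\log A$, which makes sense because $A\equiv\mathds 1 \bmod h$), we have $A^{-1}=e^{-hB}$.

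The remaining work is a direct computation. First,
\[
A^{-1}=T^{-2}\begin{pmatrix} T & 0\\ T^2-1 & T\end{pmatrix}=\begin{pmatrix} T^{-1} & 0\\ 1-T^{-2} & T^{-1}\end{pmatrix}.
\]
For $\mathbf x$, take the $j$-th column of $A^{-1}$. With $j=1$ this gives $\mathbf x_1T^{-1}+\mathbf x_2(1-T^{-2})$, which is (i). With $j=2$ it gives $\mathbf x_2T^{-1}$, which is (ii).

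For $\mathbf p$, take the $j$-th row of $A$. With $j=1$ this gives $T\mathbf p_1$, which is (iii). With $j=2$ it gives $(1-T^2)\mathbf p_1+T\mathbf p_2$, which is (iv).

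The main obstacle is keeping the index and transpose conventions of Proposition~\ref{pro:phixandp} consistent: in particular, whether $\mathbf x$ picks up columns or rows, and that the $\mathbf p$ formula involves $(A^{-1})^{-\top}=A^{\top}$. As a sanity check, I will verify that the results preserve $[\mathbf p_i,\mathbf x_j]=\delta_{ij}$. For example, $[T\mathbf p_1,\;T^{-1}\mathbf x_1+(1-T^{-2})\mathbf x_2]=1$ and $[(1-T^2)\mathbf p_1+T\mathbf p_2,\;T^{-1}\mathbf x_1+(1-T^{-2})\mathbf x_2]=(1-T^2)T^{-1}+T(1-T^{-2})=0$. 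I will also cross-check against Proposition~\ref{pro:usefulidentity}: conjugating (i) back by $R_0$ should be consistent with $R_0\mathbf x_1R_0^{-1}=\mathbf x_1T+\mathbf x_2(1-T^2)$, which after applying $P$ is exactly Proposition~\ref{pro:usefulidentity}(i).
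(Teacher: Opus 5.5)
Your proposal is correct and is essentially the paper's proof, which simply cites Proposition~\ref{pro:phixandp}. You make that citation explicit by writing $R_0^{-1}=\varphi(A^{-1})$ via Theorem~\ref{cor:identity} and reading off the columns of $A^{-1}$ and the rows of $A$, and all four resulting formulas, as well as your commutator and cross-checks, are right.
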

    \begin{proof}
      This is a direct consequence of Proposition \ref{pro:phixandp}.
    \end{proof}

The quantum cluster algebra perspective now allows for a rather straightforward extraction of perturbative terms. We thus obtain an $\epsilon$-expanded $R$-matrix, which we refer to as a \textit{perturbed $R$-matrix}.

\begin{theorem}

\label{thm:pertrmatrix}
Set $T = e^{ht} \in \mathbb{C}(t) \llbracket h \rrbracket$ and $R$ and $R_0$ be as in Notation \ref{not:R}. . The following equality holds
\begin{equation}
\label{eq:pertrmatrix}
   R=R_{0}(1 + \epsilon \mathcal{N}(f(T,x,p,\epsilon))) 
\end{equation}
with
\begin{align*}
f(T,x,p,\epsilon) &= x_{1}p_{1} + x_{2} p_{2} - (T^{-1}+T) x_{2} p_{1} + 2 x_{1}x_{2}p_{1}p_{2} + (T^{-1} - T) x_{1} x_{2} p_{1}^{2} \\
                        &\quad - (1-T^{-2})x_{2}^{2} p_{1}^{2} -2 T^{-1}x_{2}^{2} p_{1} p_{2} + O(\epsilon).
\end{align*}
\end{theorem}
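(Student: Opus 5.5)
The plan is to read off the first-order correction from the quantum cluster mutation sequence of Theorem~\ref{thm:Rmatrixcluster}, in the same way Proposition~\ref{pro:usefulidentity} extracted $R_0$. I would make the ansatz $R = R_0\exp(\epsilon\,\mathcal{N}(f_1 + \epsilon f_2))$ with unknown polynomials $f_1,f_2\in\mathbb{C}(T)[x_1,x_2,p_1,p_2]$. To first order in $\epsilon$ this is equivalent to \eqref{eq:pertrmatrix}. By Corollary~\ref{cor:clusterR0}, $R_0=\varphi(A)$ with $A=\begin{pmatrix} T&0\\ 1-T^2&T\end{pmatrix}$.

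Proposition~\ref{pro:perturbedexptill2} then gives, modulo $\epsilon^2$,
\[
R\mathbf{x}_jR^{-1} = \mathbf{x}_iA_{ij} + \epsilon\,\mathrm{Ad}_{R_0}\mathcal{N}(\partial_{p_j}f_1).
\]
So the $\epsilon$-linear part of $R\mathbf{x}_jR^{-1}$ determines $\partial_{p_j}f_1$ after undoing $\mathrm{Ad}_{R_0}$. I would undo it explicitly using the substitutions of Lemma~\ref{lem:someidentities}. These are linear in $\mathbf{x}$ and $\mathbf{p}$, so they commute with $\mathcal{N}$ up to reordering corrections, and such corrections only appear at order $\epsilon^2$ in this computation.

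To obtain the left-hand sides, I would use $P\circ\mathrm{Ad}_R = (\sigma^*)^{-1}\circ\mu_4^q\circ\mu_6^q\circ\mu_2^q\circ\mu_4^q$ exactly, not only at $\epsilon=0$. This identity is applied to $\mathbf{X}^{\mathcal Z}_1 = iT^{-1}\mathbf{x}_1$ and $\mathbf{X}^{\mathcal Z}_7 = i\mathbf{x}_2^{-1}$ from \eqref{eq:clustervariables}. The quantum mutation formulas produce products of factors $(1+q^{2r-1}X_k^{\pm1})$. In these factors I substitute $q=e^{\epsilon h}$ and $e^{\pm2h\epsilon\mathbf{x}\mathbf{p}} = 1\pm 2h\epsilon\,\mathbf{x}\mathbf{p}+O(\epsilon^2)$, and expand to first order, keeping track of the ordering of the noncommuting factors. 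Since $\mathbf{X}_1$ and $\mathbf{X}_7$ contain no $\mathbf{p}$, their images are of the form (Burau image) $+\,\epsilon\cdot(\text{terms linear in }\mathbf{x}\mathbf{p})$. From $R\mathbf{x}_2^{-1}R^{-1} = (R\mathbf{x}_2R^{-1})^{-1}$ I then get $\partial_{p_2}f_1$, and from $\mathbf{X}_1$ I get $\partial_{p_1}f_1$.

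Integrating these two derivatives in $p$ determines $f_1$ up to an additive function $g(x_1,x_2)$. To kill $g$, I would use a weight argument. The coproduct of $K$ commutes with $\mathcal{R}$, and under $\eta\otimes\eta$ this means $R$ commutes with $\mathbf{x}_1\mathbf{p}_1+\mathbf{x}_2\mathbf{p}_2$. Hence $f_1$ is a sum of monomials with equal total $x$- and $p$-degree, which forces $g$ to be constant. The constant term of $f_1$ is an overall scalar factor of $R$. It is fixed by the normalization $\alpha$ in Notation~\ref{not:R}, or by comparing the coefficient of $1$ with the expansion in the proof of Theorem~\ref{thm:rmatrixburau}. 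Comparing with the claimed formula (which has no constant term), I expect it to vanish with the normalization chosen there.

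As a consistency check, I would verify the integrability condition $\partial_{p_1}(\partial_{p_2}f_1) = \partial_{p_2}(\partial_{p_1}f_1)$. I would also check the result against the image of a $\mathbf{p}$-containing variable such as $\mathbf{X}^{\mathcal Z}_2$, whose $\epsilon^1$-coefficient tests $\partial_{x_j}f_1$ via $[\mathcal{N}(g),\mathbf{p}_j] = -\mathcal{N}(\partial_{x_j}g)$. The factors of $h$ should be absorbed consistently, since $\epsilon$ always appears in the combination $\epsilon h$ multiplying $\mathbf{x}\mathbf{p}$.

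I expect the main obstacle to be the first-order expansion of the four quantum mutations. The intermediate variables are rational in noncommuting operators, for example $\mathbf{X}_4 = -T^{-1}\mathbf{x}_1^{-1}\mathbf{x}_2$. The $\epsilon$-corrections come both from the explicit $q^{2r-1}$ factors and from reordering $e^{2h\epsilon\mathbf{x}\mathbf{p}}$ past powers of $\mathbf{x}$ (a factor $e^{\pm 2h\epsilon}$ per commutation). Tracking these corrections carefully and then bringing everything into normal order is the error-prone part. I would organize it by writing every intermediate variable as (classical value)$\cdot(1+\epsilon\,\cdot)$, and cross-check the final $f$ with the Mathematica routine of Appendix~\ref{sec:mathematica}.
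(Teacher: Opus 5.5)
Your proposal follows the paper's proof essentially step for step. You apply the quantum mutation sequence of Theorem~\ref{thm:Rmatrixcluster} to $\mathbf{X}^{\mathcal Z}_1$ and $\mathbf{X}^{\mathcal Z}_7$, expand to first order in $\epsilon$, strip off $\mathrm{Ad}_{R_0}$ with Lemma~\ref{lem:someidentities}, read off $\partial_{p_1}f$ and $\partial_{p_2}f$ via Proposition~\ref{pro:perturbedexptill2}, and integrate. Your weight argument ($R$ commutes with the image of $\Delta(K)$, hence with $\mathbf{x}_1\mathbf{p}_1+\mathbf{x}_2\mathbf{p}_2$) is a real improvement, since the paper silently drops the $p$-independent integration term that this argument reduces to a normalization constant.

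There is one slip, in your optional cross-check only. $\mathbf{X}^{\mathcal Z}_2=-qT^2e^{2h\epsilon\mathbf{x}_1\mathbf{p}_1}$ is a scalar at $\epsilon=0$, so $f$ first enters $R\mathbf{X}^{\mathcal Z}_2R^{-1}$ at order $\epsilon^2$, not $\epsilon^1$. Its $\epsilon^1$-coefficient therefore cannot test $\partial_{x_j}f_1$.
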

\begin{proof}

  Let us proceed by finding the first higher-order term. We have
    \begin{align}
      P(R \mathbf{X}_{1}^{\mathcal{Z}} R^{-1})  &=      P(R i T^{-1} \mathbf{x}_{1} R^{-1}) \nonumber \\
                                                              &=(\sigma_{3,5} \circ \sigma_{2,5} \circ \sigma_{3,6} \circ \mu_{4}^{q} \circ \mu_{6}^{q} \circ \mu_{2}^{q} \circ \mu_{4}^{q}) (\mathbf{X}_{1}^{\mathcal{Z}}) \nonumber  \\
      &= \mathbf{X}^{\mathcal{Z}}_{1}(1+q \mathbf{X}^{\mathcal{Z}}_{2}(1 + q  \mathbf{X}^{\mathcal{Z}}_{4})) \nonumber \\
      &= i T^{-1} \mathbf{x}_{1}(1 - q^{2}T^{2} q^{2 \mathbf{x}_{1} \mathbf{p}_{1}}(1-T^{-1} q \mathbf{x}_{1}^{-1}\mathbf{x}_{2})) \nonumber \\
                                                              &= \mathcal{N}(i T^{-1} x_{1}(1 - q^{2}T^{2} (1-T^{-1} q^{-1} x_{1}^{-1}x_{2}))q^{2 x_{1} p_{1}}) \nonumber \\
                                                              &= \mathcal{N}\bigg(i T^{-1}(x_{1} - T^{2}x_{1} + T x_{2}) \nonumber \\
      &+ i T^{-1} (-2 T^{2} x_{1} - 2 T^{2}x_{1}^{2}p_{1} + T x_{2} + 2 T x_{1}x_{2}p_{1})\epsilon h\bigg) + O(\epsilon^2) \label{eq:rx1r}.
    \end{align}
    Moreover, we have
   \begin{align*}
      P(R \mathbf{X}_{7}^{\mathcal{Z}} R^{-1})  &=      P(R i  \mathbf{x}_{2}^{-1} R^{-1}) \\
                                                              &=(\sigma_{3,5} \circ \sigma_{2,5} \circ \sigma_{3,6} \circ \mu_{4}^{q} \circ \mu_{6}^{q} \circ \mu_{2}^{q} \circ \mu_{4}^{q}) (\mathbf{X}_{7}^{\mathcal{Z}})  \\
      &= (1+q(1+q \mathbf{X}^{\mathcal{Z}}_{4})\mathbf{X}_{6}^{\mathcal{Z}})\mathbf{X}^{\mathcal{Z}}_{7} \\
      &= i \mathbf{x}_{2}^{-1}(1- q^{-2 \mathbf{x}_{2} \mathbf{p}_{2}} (1-T^{-1}q \mathbf{x}_{1}^{-1} \mathbf{x}_{2})) \\
                                                              &= i \mathbf{x}_{2}^{-1}(1-  (1-T^{-1}q^{-1} \mathbf{x}_{1}^{-1} \mathbf{x}_{2})q^{-2 \mathbf{x}_{2} \mathbf{p}_{2}}) \\
     &= i T^{-1} \mathbf{x}_{1}^{-1} (1+(-1 + 2 T \mathbf{x}_{1} \mathbf{p}_{2} -2 \mathbf{x}_{2}\mathbf{p}_{2}) \epsilon h) + O(\epsilon^2).
\end{align*}
This implies
   \begin{align*}
     P(R   \mathbf{x}_{2} R^{-1}) &=       T \mathbf{x}_{1} (1-(-1 + 2 T \mathbf{x}_{1} \mathbf{p}_{2} -2 \mathbf{x}_{2}\mathbf{p}_{2})\epsilon h)  + O(\epsilon^{2}) \\
     &= T \mathbf{x}_{1} + (  T \mathbf{x}_{1} -2  T^{2} \mathbf{x}_{1}^{2} \mathbf{p}_{2} + 2  T \mathbf{x}_{1} \mathbf{x}_{2} \mathbf{p}_{2}) h \epsilon +O(\epsilon^{2}).
\end{align*}
We make the following ansatz for the form of $R$:
\[
  R = R_{0} \mathrm{exp}( \mathcal{N}(\epsilon f(x,p,\epsilon))).
  \]
  Using Proposition \ref{pro:perturbedexptill2}, it is clear that
  \[
    R \mathbf{x}_{j}R^{-1} = \mathbf{x}_{i} A_{ij} + \epsilon \mathrm{Ad}_{R_{0}} \mathcal{N}(\partial_{p_{j}} f(x,p)) +O(\epsilon^{2}).
    \]
    This formula shall be used to find $f(x,p,\epsilon) + O (\epsilon^{2})$.

    From \eqref{eq:rx1r}, it follows that
    \begin{align*}
      R \mathbf{x}_{1}R ^{-1} &= \mathcal{N}\bigg((x_{2} - T^{2}x_{2} + T x_{1}) \\
      &+  (-2 T^{2} x_{2} - 2 T^{2}x_{2}^{2}p_{2} + T x_{1} + 2 T x_{1}x_{2}p_{2})h\epsilon \bigg) +O(\epsilon^{2}).
    \end{align*}
With the aid of Lemma \ref{lem:someidentities}, we conclude that
    \begin{align*}
     R_{0}^{-1} R \mathbf{x}_{1}R ^{-1}R_{0} &= \mathcal{N}\bigg(x_{1} +  (x_{1} - T^{-1} x_{2} -T x_{2}+ 2 x_{1}x_{2}p_{2} \\
                        &+ 2(T^{-1}-T)x_{1}x_{2}p_{1} + 2(1-T^{-2})x_{2}^{2}p_{1} -2 T^{-1} x_{2}^{2}p_{2})h\epsilon \bigg) +O(\epsilon^{2}),
    \end{align*}
    which, with Proposition \ref{pro:perturbedexptill2} implies
    \begin{align*}
      \partial_{p_{1}} f(x,p,\epsilon) &=  (x_{1} - T^{-1} x_{2} -T x_{2}+ 2 x_{1}x_{2}p_{2} \\
                        &+ 2(T^{-1}-T)x_{1}x_{2}p_{1} + 2(1-T^{-2})x_{2}^{2}p_{1} -2 T^{-1} x_{2}^{2}p_{2})h +O(\epsilon).
    \end{align*}
    Similarly, using Lemma \ref{lem:expconj}
    \begin{align*}
     R_{0}^{-1} R x_{2}R ^{-1}R_{0} &= \mathcal{N}\bigg(x_{2} +( x_{2} + x_{1}x_{2} p_{1} -2 T^{-1}x_{2}^{2} p_{1}  )h\epsilon \bigg)+O(\epsilon^{2}),
    \end{align*}
    which, with Proposition \ref{pro:perturbedexptill2} implies
    \begin{equation}
      \partial_{p_{2}} f(x,p,\epsilon) = (x_{2} + 2 x_{1}x_{2} p_{1} - 2 T^{-1}x_{2}^{2} p_{1}) h +O(\epsilon).
      \end{equation}
      Thus, through integration, we conclude that
      \begin{align*}
        f(x,p,\epsilon) &= x_{1}p_{1} + x_{2} p_{2} - (T^{-1}+T) x_{2} p_{1} + 2 x_{1}x_{2}p_{1}p_{2} + (T^{-1} - T) x_{1} x_{2} p_{1}^{2} \\
                        &\quad - (1-T^{-2})x_{2}^{2} p_{1}^{2} -2 T^{-1}x_{2}^{2} p_{1} p_{2} + O(\epsilon),
      \end{align*}
      as claimed.
\end{proof}
Note that \eqref{thm:pertrmatrix}, the expression is written as
$\mathcal{N}(\dots) \times \mathcal{N}(\dots)$. To transform this into an expression of the form $\mathcal{N}(\dots)$, Proposition \ref{pro:usefulidentity} should be invoked.

We can now obtain a knot invariant in line with the construction in \cite{bar2021perturbed,bosch2025tensorsgaussiansalexanderpolynomial}. We will expand on this view in the forthcoming section. To carry out explicit computations in \emph{Mathematica}, all expressions must be written in normal-ordered form. Using the $R$-matrix in \eqref{eq:pertrmatrix}, we can perform computations up to first order in $\ epsilon$. In the \emph{Mathematica} code provided in Appendix~\ref{sec:mathematica}, a perturbed $R $-matrix up to second order in  $\epsilon^2 $ is derived. Higher-order terms can be obtained by applying the same methods described above.

\subsection{Perturbed Knot Invariants}
A standard approach for constructing knot invariants utilizes ribbon Hopf algebras~\cite{ReshetikhinTuraev1990,TuraevQI}. Informally, a \emph{ribbon Hopf algebra} is a Hopf algebra $A$ equipped with
distinguished invertible elements
\[
R=\sum_i \alpha_i\otimes \beta_i \in A\otimes A,\qquad \kappa\in A^\times,
\]
which turns the tensor category $\mathrm{fMod}_A$ of finite-dimensional $A$-modules into a ribbon  category. Here $\kappa$ denotes the balancing element.

It is equally possible to construct knot invariants universally. This is described in detail in \cite[Section 4.1]{ohtsuki2002quantum}. We recall the essentials, using a slightly different convention.

Let $(A,R,\kappa)$ be a ribbon Hopf algebra. Consider a long, framed, oriented knot $\mathcal{K}$ expressed within a diagram in which every strand of every crossing is locally oriented upward. Denote $R=\sum_i \alpha_i\otimes \beta_i \in A\otimes A$ and $R^{-1} = \sum \overline{\alpha}_i\otimes \overline{\beta}_i \in A\otimes A$. To each generator of the knot diagram, we place beads in accordance with the following rules:
\begin{equation*}
\centre{
\centering
\includegraphics[width=0.8\textwidth]{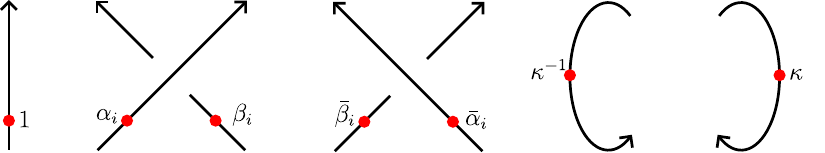}}
\end{equation*}

Traversing the knot once in the given orientation and multiplying the bead labels from right to left (and summing over the indices) produces an element
\[
Z_A(\mathcal{K})\in A,
\]
the \textit{universal invariant} associated to $\mathcal{K}$.

\begin{example}
  Let $\mathcal{K}$ be the trefoil as illustrated below. We have already decorated the crossings and closures as described above.
\begin{figure}[H]
\centering
\begin{tikzpicture}[scale=1]
  \node[anchor=south west, inner sep=0] (img) at (0,0)
    {\includegraphics[scale=0.7]{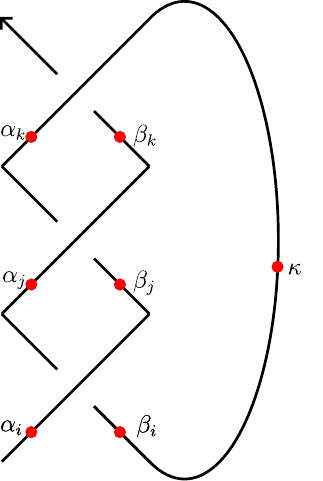}};

\end{tikzpicture}
\end{figure}
The universal invariant, associated with the trefoil, is given by
\[
  Z(\mathcal{K})= \sum_{i,j,k}  \beta_k\alpha_j \beta_i\kappa \alpha_k\beta_{j }\alpha_i.
  \]
\end{example}
To obtain a knot invariant, one does not need a full ribbon Hopf algebra structure on an algebra $A$. Instead, Becerra showed in \cite{becerra2025refinedfunctorialuniversaltangle} that it suffices to consider an algebra structure on $A$ along with specific relations between the elements $R$ and $\kappa$ that ensure the invariance of $Z_A(\mathcal{K})$ under changes of the knot diagram that do not alter the isotopy type of the knot $\mathcal{K}$ it represents. Equivalently, elements $R$ and $\kappa$ satisfy relations that force $Z_A(\mathcal{K})$ to be invariant under the framed Reidemeister moves. This more general structure, given by the triple $(A, R, \kappa)$, is called an \textit{XC-algebra}. Formally, it is defined as follows.
\begin{definition}
Let $ \bk $ be a commutative ring with a unit, and let $(A, \mu, 1) $ be a $ \bk $-algebra. An \emph{$XC $-structure} on $ A $ consists of two invertible elements,
\[
    R \in A \otimes A, \qquad \kappa \in A,
\]
called the \emph{universal $R $-matrix} and the \emph{balancing element}, respectively, which must satisfy the following conditions:
\begin{enumerate}
    \item\label{item:1} $ R = (\kappa \otimes \kappa) \cdot R \cdot (\kappa^{-1} \otimes \kappa^{-1}) $,
  \item $\mu^{[3]}(R_{13} \cdot \kappa_2) = \mu^{[3]}(R_{31} \cdot \kappa_2^{-1}) $,
        \item $1 \otimes \kappa^{-1} = (\mu \otimes \mu^{[3]})(R_{15} \cdot R_{23}^{-1} \cdot \kappa_{4}^{-1}),$
    \item $ \kappa \otimes 1 = (\mu^{[3]} \otimes \mu)(R_{34}^{-1} \cdot R_{15} \cdot \kappa_2) $,
    \item $R_{12} R_{13} R_{23} = R_{23} R_{13} R_{12} $,
\end{enumerate}
where $ \mu^{[3]} $ denotes the three-fold multiplication map. For indices $1 \leq i, j \leq n $ with $ i \neq j $, we define
\[
    R_{ij} := \begin{cases}
        (1^{\otimes (i-1)} \otimes \mathrm{Id} \otimes 1^{\otimes (j-i-1)} \otimes \mathrm{Id} \otimes 1^{\otimes (n-j)})(R^{\pm 1}), & i > j, \\
        (1^{\otimes (j-1)} \otimes \mathrm{Id} \otimes 1^{\otimes (i-j-1)} \otimes \mathrm{Id} \otimes 1^{\otimes (n-i)})(\mathrm{flip}_{A,A}R^{\pm 1}), & j > i.
    \end{cases}
\]
Similarly, we write $\kappa_i^{\pm 1} = (1^{\otimes (i-1)} \otimes \mathrm{Id} \otimes 1^{\otimes (n-i)})(\kappa^{\pm 1}) $.

A triple $ (A, R, \kappa) $ consisting of a $\bk $-algebra equipped with an $ XC $-structure is called an \emph{$XC $-algebra}.
\end{definition}

\begin{proposition}
  \label{pro:perturbedxc}
  Let $T = e^{ht}$. The triple
\[
  \mathscr{A} = (A_{1}(\mathbb{C}_\epsilon),R_0(1+ \epsilon \mathcal{N}(f(x,p,\epsilon))),T^{-1} \mathcal{N}(e^{x p e^{-2 h \epsilon-h} - h \epsilon} ))
\]
with
  \[
  R_0 = \varphi \left(\begin{pmatrix} T & 0 \\ 1-T^{2} & T \end{pmatrix} \right)
\]
and
\begin{align*}
f(x,p,\epsilon) &= x_{1}p_{1} + x_{2} p_{2} - (T^{-1}+T) x_{2} p_{1} + 2 x_{1}x_{2}p_{1}p_{2} + (T^{-1} - T) x_{1} x_{2} p_{1}^{2} \\
                        &\quad - (1-T^{-2})x_{2}^{2} p_{1}^{2} -2 T^{-1}x_{2}^{2} p_{1} p_{2} + O(\epsilon)
\end{align*}
is an XC-algebra.
\end{proposition}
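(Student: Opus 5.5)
Two routes are available. The structural one transports the XC-structure along the algebra homomorphism $\eta\colon\widetilde{\mathfrak D}\to A_1(\mathbb C_\epsilon(t))$ of Proposition~\ref{pro:perthom}. The direct one verifies the five axioms order by order in $\epsilon$ inside the Weyl--Heisenberg algebra. I would use the first as the main argument and the second as an independent check.

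\textbf{Structural argument.} The $h$-formal algebra $\widetilde{\mathfrak D}$, after quotienting by $H+H'=0$, is the ribbon Hopf algebra $U_{\epsilon h}(\mathfrak{sl}_2)$. Its universal $R$-matrix $\mathcal R$ and balancing element $\kappa=K^{-1}$ (or $K$, depending on the bead convention) form an XC-structure; this is the standard fact that ribbon Hopf algebras are XC-algebras \cite{becerra2025refinedfunctorialuniversaltangle}. Every XC-axiom is a polynomial identity in $R^{\pm1}$ and $\kappa^{\pm1}$ built from products, leg embeddings and the multiplication maps $\mu,\mu^{[3]}$. Therefore any algebra homomorphism $\eta$ sends an XC-structure to an XC-structure, provided $\eta$ is compatible with the completions.

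First I check that $\eta$ kills $H+H'$. By Proposition~\ref{pro:perthom}, $\eta(H)+\eta(H')=0$, so $\eta$ factors through $U_{\epsilon h}(\mathfrak{sl}_2)$.

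Next I compute $\eta(K^{\pm1})$ and normal order it. We have $\eta(K)=e^{\epsilon h\eta(H)}=e^{-2h^2\epsilon^2\mathbf x\mathbf p}\cdots$. Normal ordering via Proposition~\ref{pro:normalordering} (with $n=1$) gives a Gaussian $\mathcal N(e^{xp(e^{c}-1)})$ times a scalar. I then have to reconcile the exponents with the stated $\kappa=T^{-1}\mathcal N(e^{xpe^{-2h\epsilon-h}-h\epsilon})$. I expect this to need the same rescaling conventions used in the proof of Theorem~\ref{thm:rmatrixburau}, where effectively $h$ is traded for $\epsilon h$ in the $H\otimes H$ term. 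I would redo the bookkeeping of that proof to fix $\kappa$, and absorb scalars into $T^{-1}$.

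Then I deal with the scalar $\alpha$. The divergent factor $e^{ht^2\epsilon^{-1}/2}$ makes $R=\alpha^{-1}(\eta\otimes\eta)(\mathcal R)$ differ from the image of $\mathcal R$ by a central scalar. Axioms (1) and (5) are homogeneous in $R$, so they are unaffected. Axioms (2)--(4) are not homogeneous, because they contain a single $R^{\pm1}$ balanced against $\kappa$. These axioms change by a factor $\alpha^{\pm1}$, which must be compensated by rescaling $\kappa$. This is exactly the role of the prefactor $T^{-1}$ and the $-h\epsilon$ shift in $\kappa$. I therefore define $\kappa$ as the image of the quantum balancing element multiplied by the appropriate power of $\alpha$. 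I then show that this equals the displayed expression modulo the order in $\epsilon$ to which $f$ is specified.

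\textbf{Direct verification.} As an independent check, I verify the axioms directly. At $\epsilon=0$ the structure $(A_1,R_0,\kappa_0)$ is, up to scalar, the XC-algebra of \cite[Prop.~4.3]{bosch2025tensorsgaussiansalexanderpolynomial}. At that order every axiom reduces, by Theorem~\ref{cor:identity} and Proposition~\ref{pro:phipsi}, to a matrix identity among $2\times2$ and $3\times3$ blocks. Examples are Yang--Baxter for the block $\begin{pmatrix}T&0\\1-T^2&T\end{pmatrix}$ and $\kappa$-invariance, which follows from $\kappa$ acting diagonally by Proposition~\ref{pro:phixandp}.

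At first order I write each axiom as $R_0$-data times $(1+\epsilon\,\cdot)$. I conjugate the correction terms $\mathcal N(f)_{ij}$ through the $R_0$ factors using Lemma~\ref{lem:someidentities}, and reduce everything to a polynomial identity in commuting variables via $\mathcal N$ and Proposition~\ref{pro:perturbedexptill2}.

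\textbf{Main obstacle.} The hardest step is axioms (2)--(4), where the multiplication $\mu^{[3]}$ merges tensor legs. These require normal ordering products such as $\mathcal N(\cdot)\mathcal N(\cdot)$ in a single copy of $A_1$, which is not just a substitution. I would handle this with the contraction formula, the Gaussian/generating-function rule of \cite{bar2021perturbed}, applied to the zeroth-order Gaussians with linear and quadratic insertions from $f$. Failing that, I would rely on the Mathematica check in Appendix~\ref{sec:mathematica}, which the paper already cites as a sanity check of exactly these conditions.
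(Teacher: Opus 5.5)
Your structural route is the one the paper takes: push the XC-structure $(U_{\epsilon h}(\mathfrak{sl}_2),\mathcal R,q^H)$ of \cite{becerra2025refinedfunctorialuniversaltangle} forward along $\eta'$, then divide $R$ by the central scalar $\alpha$. The gap is in how you handle $\alpha$. Your claim that axioms (2)--(4) are not homogeneous in $R$ is false. In (2) each side contains exactly one $R$, so $R\mapsto cR$ multiplies both sides by $c$. In (3) and (4) the right-hand side contains one $R$ and one $R^{-1}$, so $c$ cancels, and the left-hand side contains no $R$ at all. Together with (1) and (5), every axiom is therefore invariant under rescaling $R$ by a central invertible scalar. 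This one-line observation is all the paper uses, and it means $\kappa$ must stay equal to $\eta'(q^H)$. The compensation you propose actually does harm. Replacing $\kappa$ by $c\kappa$ leaves (1), (3), (4), (5) unchanged, but it multiplies the two sides of (2) by $c$ and $c^{-1}$ respectively, so (2) fails unless $c^2=1$. Multiplying the image of the balancing element by a nontrivial power of $\alpha$ would thus destroy the XC-structure. In any case $\alpha$ contains the factor $e^{ht^2\epsilon^{-1}/2}$, which appears nowhere in the displayed $\kappa$, so your planned matching step could not succeed.

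The prefactor $T^{-1}$ and the shift $-h\epsilon$ have nothing to do with $\alpha$; they are the scalar part of $\eta(K)$ itself. The formula listed for $\eta(H)$ in Proposition~\ref{pro:perthom} is really the image of $\epsilon hH=\log K$. Compare the proof of Theorem~\ref{thm:rmatrixburau}, which uses $\eta(H)=-(2\mathbf x\mathbf p+t\epsilon^{-1}+1)$. Exponentiating that formula a second time is what produced your $e^{-2h^2\epsilon^2\mathbf x\mathbf p}$. Computing directly from $K\mapsto q^2X_4X_3X_1$ and the images of the $X_i$ gives $\eta(K)=q^{-1}T^{-1}e^{-2\epsilon h\mathbf x\mathbf p}$, which is exactly the $(\rho\circ\iota)(K)$ of Proposition~\ref{pro:perturbativerep}. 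Proposition~\ref{pro:normalordering} with $n=1$ turns this into $T^{-1}\mathcal N\bigl(e^{(e^{-2h\epsilon}-1)xp-h\epsilon}\bigr)$. That is the stated $\kappa$, with the Gaussian exponent read as $(e^{-2h\epsilon}-1)xp$; no rescaling convention is needed. With these two corrections your argument closes as in the paper. Since $\eta$ kills $H+H'$, the pair $((\eta\otimes\eta)(\mathcal R),\eta(q^H))$ is an XC-structure, hence so is $(R,\eta(q^H))$. Theorem~\ref{thm:pertrmatrix} then identifies $R$ with $R_0(1+\epsilon\mathcal N(f))$ to the stated order. Your order-by-order verification is unnecessary. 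As sketched, it defers (2)--(4) to the Mathematica check and would not stand as a proof on its own.
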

\begin{proof}
First, we note that the triple $(U_h(\mathfrak{sl}_2),\mathcal{R}, q^H)$ with $q = e^{\epsilon h}$ is an XC-algebra, as shown in \cite[Prop. 4.4]{becerra2025refinedfunctorialuniversaltangle}. In Proposition \ref{pro:perthom}, we derived a homomorphism $\eta \colon \widetilde{\mathfrak{D}} \to A_1(\mathbb{C}_\epsilon(t))$. Note that this map descends to a homomorphism $\eta ' \colon U_h(\mathfrak{sl}_2) \to A_1(\mathbb{C}_\epsilon(t))$ with $\eta'(\mathcal{R}) =\alpha R$ (with $\alpha \in \mathbb{C}_\epsilon \llbracket h \rrbracket$ as in Theorem \ref{thm:rmatrixburau}) and $\eta'(q^{H}) =T^{-1} \mathcal{N}(e^{x p e^{-2 h \epsilon-h} - h \epsilon})$. We may rescale $R$ by $\alpha^{-1}$, since the XC-structure condition is preserved under this rescaling. With Theorem \ref{thm:pertrmatrix}, we obtain the desired result.
\end{proof}
\begin{example}
 Consider the $0$-framed trefoil, presented as a long knot, and denote it by $\mathcal{K}$. Let $\mathscr{A}$ be the XC-algebra from Proposition \ref{pro:perturbedxc}.

We now define tensor multiplication. For a more detailed discussion, see \cite{bosch2025tensorsgaussiansalexanderpolynomial,bar2021perturbed}.
Throughout this section, let $A$ be a unital
associative $\bk$-algebra with unit map $u\colon \bk \to A$, and let $I$ be a
finite set. Let $k \in I$, and let $u\colon \bk \to A$ be the unit map. Using the identification
\[
A^{\otimes(I\setminus\{k\})}\cong A^{\otimes(I\setminus\{k\})}\otimes \bk,
\]
define
\[
\eta_k\colon A^{\otimes(I\setminus\{k\})}\longrightarrow A^{\otimes I}
\]
to be the composite
\[
A^{\otimes(I\setminus\{k\})}
\cong A^{\otimes(I\setminus\{k\})}\otimes \bk
\xrightarrow{\ \mathrm{id}\otimes u\ }
A^{\otimes(I\setminus\{k\})}\otimes A
\longrightarrow A^{\otimes I}.
\]
Set
\[
V_k:=\operatorname{Im}(\eta_k)\subset A^{\otimes I}.
\]
Thus, $V_k$ consists of tensors with a trivial $k$-leg.

Let $i,j,k\in I$ be pairwise distinct. The \textit{multiplication along $(i,j)$ into $k$} is the $\bk$-linear map
\[
\mathbf{m}_{i,j \rightarrow k} \colon V_k \longrightarrow A^{\otimes I}
\]
defined by
\begin{equation}\label{eq:contraction-rule}
\mathbf{m}_{i,j\rightarrow k}\big((a)_i(b)_jx\big) := (ab)_kx
\qquad a,b\in A, \text{ and  }  x \text{ supported on }I\setminus\{i,j,k\},
\end{equation}
and extended linearly. In other words: for every elementary tensor whose $k$--leg equals the unit, the map replaces the elements in the $i$-- and $j$--legs with the unit and places their product in the $k$--leg of the output.

In what follows, we will often write composition as
\[
f \circ g = g \sslash f.
\]

Since we consider the $0$-framed trefoil, we need ribbon elements to remove the framing; these are of the form
\begin{align*}
   v_i &=  R^{-1}_{13} \kappa^{-1}_2 \sslash \mathbf{m}_{2,1 \to \bar{1}}\sslash \mathbf{m}_{3,\bar{1} \to i}, \\
   v_i^{-1} &=  R_{13} \kappa_2 \sslash \mathbf{m}_{2,\bar{1} \to 1}\sslash \mathbf{m}_{3,\bar{1} \to i}.
\end{align*}
Define
\[
Z:= R_{15}R_{62}R_{37} \kappa_4 v_8 v_9 v_{10} \in A_1(\mathbb{C}_\epsilon(t))^{\otimes 10}.
\]
Then
\[
 \mathbf{Z}_{\mathscr{A}}(\mathcal{K})
 = Z \sslash \mathbf{m}_{2,1 \to \bar{1}}
 \sslash \mathbf{m}_{3,\bar{1} \to 1}
 \sslash \mathbf{m}_{4,1 \to \bar{1}}
 \sslash \dots
 \sslash \mathbf{m}_{10,1 \to \bar{1}}.
\]
This computation was carried out explicitly in Mathematica; see Appendix \ref{sec:mathematica}.

\end{example}


\section{Towards $\mathfrak{sl}_{3}$}
\label{sec:towsl3}
\noindent
In theory, the method of finding a perturbed $R$-matrix described in the previous sections can be extended to other Lie algebras. In practice, this comes with some difficulties. First, one has to choose an appropriate initial seed related to a cluster algebra. One can find this through trial and error by performing mutations. Another difficulty is finding a good basis for the Schrödinger representation related to the chosen initial seed. Here ``good'' means: finding a representation for which an expansion of the universal R-matrix can be performed in $\epsilon$ that does not involve inverses of generators of the Weyl--Heisenberg algebra. In this section, we describe a possible direction for $\mathfrak{sl}_{3}$. Let us start by recalling the definition of $U_{q}(\mathfrak{sl}_{3})$.
Define
\[
A=(a_{ij})_{i,j\in\{1,2\}}=\begin{pmatrix}2&-1\\-1&2\end{pmatrix}.
\]
Let $\mathfrak{D}_{\mathfrak{sl}_{3}}$ be the $\mathbb C(q)$-algebra generated by
$\{E_{1},E_{2},F_{1},F_{2},K_{1}, \widetilde{K}_{1}, K_{2}, \widetilde{K}_{2}\}$, where the $K$’s are invertible, subject to the relations
\begin{gather}
  K_{i} K_{j}=K_{j} K_{i}, \quad \widetilde{K}_{i} \widetilde{K}_{j}=\widetilde{K}_{j} \widetilde{K}_{i}, \quad \widetilde{K}_{i} K_{j}=K_{j} \widetilde{K}_{i}, \nonumber\\
  \left\{\begin{array}{ll}K_{i} E_{j}=q^{a_{i j}} E_{j} K_{i}, & \widetilde{K}_{i} E_{j}=q^{-a_{i j}} E_{j} \widetilde{K}_{i} \\ K_{i} F_{j}=q^{-a_{i j}} F_{j} K_{i}, & \widetilde{K}_{i} F_{j}=q^{a_{i j}} F_{j} \widetilde{K}_{i},\end{array}\right. \nonumber\\
  E_{i} F_{j}-F_{j} E_{i}=\delta_{i j}\left(q-q^{-1}\right)\left(K_{i}-\widetilde{K}_{i}\right), \nonumber\\
  E_{i}^{2} E_{j}-\left(q+q^{-1}\right) E_{i} E_{j} E_{i}+E_{j} E_{i}^{2}=0, \quad \text{if } a_{i j}=-1, \nonumber\\
  F_{i}^{2} F_{j}-\left(q+q^{-1}\right) F_{i} F_{j} F_{i}+F_{j} F_{i}^{2}=0, \quad \text{if } a_{i j}=-1. \label{eq:qg}
\end{gather}

As with $U_{q}(\mathfrak{sl}_{2})$, this algebra has an $h$-formal version. Let $\overline{\mathfrak{D}}_{\mathfrak{sl}_{3}}$ be the algebra over the ring $\mathbb{C} \llbracket h \rrbracket$ defined by generators
\[
  \{ E_{i}, F_{i}, H_{i}, H'_i \mid i = 1, \dots, n \}
  \]
  subject to the relations above, where
  \[
    q = e^{h} \text{ and } K_{i} = q^{H_{i}} = e^{h H_{i}}.
    \]
    The algebra $U_{h}(\mathfrak{sl}_{3})$ of the quantum group is defined as the quotient of the associative algebra $\overline{\mathfrak{D}}_{\mathfrak{sl}_{3}}$ by the relations
    \[
      H_{i} + H_{i}' = 0 \quad \text{ for } \quad i = 1, \dots, n.
      \]
      Define
      \[
        e_{i} := (q-q^{-1})^{-1} E_{i}, \quad f_{i} := (q-q^{-1})^{-1}, F_{i} \quad \text{and} \quad h_{i} := H_{i}.
        \]
      We choose a normal ordering of positive roots $\alpha_{1} \prec \alpha_{1} + \alpha_{2} \prec \alpha_{2}$ with corresponding root vectors
    \[
e_{12}:=e_{\alpha_1+\alpha_2}=e_1e_2-q^{-1}e_2e_1,\qquad
f_{12}:=f_{\alpha_1+\alpha_2}=f_2f_1-q f_1f_2.
\]
Define the $q$-exponential by
\[
\exp_q(x):=\sum_{n=0}^\infty \frac{q^{\frac{n(n-1)}{2}}}{[n]_q!} x^n,
\qquad [n]_q!:=\prod_{k=1}^n \frac{q^k-q^{-k}}{q-q^{-1}} .
\]
      The corresponding universal $R$-matrix of $\mathfrak{D}_{\mathfrak{sl}_{3}}$ is of the following form
      \begin{equation}
      \label{eq:sl3rmatrix}
\mathcal R_{\mathfrak{sl}_3}
=
\left(\prod_{\beta\in\Phi^+}^{\rightarrow}
\exp_{q} \big((q-q^{-1})e_\beta\otimes f_\beta\big)\right)
q^{\sum_{i,j=1}^2 (A^{-1})_{ij}h_i\otimes h_j}
      \end{equation}
where $\Phi^+=\{\alpha_1,\alpha_1+\alpha_2,\alpha_2\}$ is the set of positive roots of $\mathfrak{sl}_3$ and the arrow on the product indicates that it is taken with respect to the chosen order
\[
\alpha_1 \prec \alpha_1+\alpha_2 \prec \alpha_2,
\qquad
\prod_{\beta\in\Phi^+}^{\rightarrow} X_\beta
=
X_{\alpha_1}X_{\alpha_1+\alpha_2}X_{\alpha_2}.
\]

We shall now introduce the corresponding quantum cluster algebra. Let $\mathcal{D}_{\mathfrak{sl}_{3}}$ be the quantum torus algebra related to the following quiver:
\[
\vcenter{\hbox{\begin{tikzpicture}[
  >=Triangle,
  circ/.style={circle,draw,fill=white},
  box/.style={rectangle,draw,fill=white},
  arr/.style={-{Triangle}, line width=1pt} 
]

\node[circ] (E1) at (0,  2.10) {};
\node[below] at (E1.south) {$9$};
\node[circ] (E2) at (0,  0.85) {};
\node[below] at (E2.south) {$10$};
\node[circ] (L)  at (-1.15, 0.00) {};
\node[below] at (L.south) {$5$};
\node[circ] (R)  at ( 1.15, 0.00) {};
\node[below] at (R.south) {$7$};
\node[circ] (F1) at (0, -0.70) {};
\node[below] at (F1.south) {$6$};
\node[circ] (B)  at (0, -2.05) {};
\node[below] at (B.south) {$2$};

\node[box] (UL) at (-2.35, 0.85) {};
\node[left] at (UL.west) {$4$};
\node[box] (LL) at (-2.35,-0.35) {};
\node[below] at (LL.south) {$1$};
\node[box] (UR) at ( 2.35, 0.85) {};
\node[right] at (UR.east) {$8$};
\node[box] (LR) at ( 2.35,-0.35) {};
\node[below] at (LR.south) {$3$};


\draw[arr] (E1) -- (UL);
\draw[arr, dashed] (LL) -- (UL);
\draw[arr] (UL) -- (L);

\draw[arr] (L) -- (LL);

\draw[arr] (LL) -- (B);
\draw[arr] (B) -- (L);

\draw[arr] (L) -- (E1);
\draw[arr] (E1) -- (R);
\draw[arr] (R) -- (UR);
\draw[arr] (UR) -- (E1);
\draw[arr, dashed] (UR) -- (LR);
\draw[arr] (B) -- (LR);
\draw[arr] (LR) -- (R);

\draw[arr] (L)  -- (F1);
\draw[arr] (F1) -- (R);
\draw[arr] (R)  -- (B);

\draw[arr] (R)  -- (E2);
\draw[arr] (E2) -- (L);

\end{tikzpicture}}}
\]

Following \cite{Schrader2019}, there exists a map $\iota \colon \mathfrak{D}_{\mathfrak{sl}_{3}} \to \mathcal{D}_{\mathfrak{sl}_{3}}$ given by
\begin{align*}
  &E_{1} \mapsto i X_{8} (1+qX_{9}),  && K_{1} \mapsto q^{2} X_{8}X_{9}X_{4}, \\
  &E_{2} \mapsto i X_{3}(1+q X_{7}(1+ q X_{10}(1+ q X_{5} ))), && K_{2} \mapsto q^{4}X_{3}X_{7}X_{10}X_{5}X_{1}, \\
  &F_{1} \mapsto i X_{4}(1 + q X_{5}(1+ q X_{6}(1+ qX_{7}))), && \widetilde{K}_{1} \mapsto q^{4} X_{4}X_{5}X_{6}X_{7}X_{8}, \\
  &F_{2} \mapsto i X_{1} (1+ q X_{2}), && \widetilde{K}_{2} \mapsto q^{2} X_1 X_{2} X_{3}.
\end{align*}
Although it is possible to construct a representation associated to this quantum torus algebra using the Weyl–Heisenberg representation, we have not found a “good” choice of basis for this representation. A better strategy is to perform a sequence of mutations for which such a “good” representation \textit{can} be obtained. Indeed, let $\mathcal{D}'_{\mathfrak{sl}_{3}}$ be the quantum torus algebra related to the following quiver:
\[
\vcenter{\hbox{\begin{tikzpicture}[
  >=Triangle,
  circ/.style={circle,draw,fill=white},
  box/.style={rectangle,draw,fill=white},
  arr/.style={-{Triangle}, line width=1pt, shorten >=2.5pt, shorten <=2.5pt}
]

\node[circ] (E1) at (0,  2.10) {};
\node[above] at (E1.north) {$9$};
\node[circ] (E2) at (0,  0.85) {};
\node[below, xshift=-1mm, yshift=-0.5mm] at (E2.west) {$10$};
\node[circ] (L)  at (-1.15, 0.00) {};
\node[below] at (L.south) {$5$};
\node[circ] (R)  at ( 1.15, 0.00) {};
\node[above] at (R.east) {$7$};
\node[circ] (F1) at (0, -0.70) {};
\node[below] at (F1.east) {$6$};
\node[circ] (B)  at (0, -2.05) {};
\node[below] at (B.south) {$2$};

\node[box] (UL) at (-2.35, 0.85) {};
\node[left] at (UL.west) {$4$};
\node[box] (LL) at (-2.35,-0.35) {};
\node[below] at (LL.south) {$1$};
\node[box] (UR) at ( 2.35, 0.85) {};
\node[right] at (UR.east) {$8$};
\node[box] (LR) at ( 2.35,-0.35) {};
\node[below] at (LR.south) {$3$};


\draw[arr]         (LL) -- (B);          
\draw[arr, dashed] (LL) -- (UL);         

\draw[arr] (B) -- (F1);                  

\draw[arr, dashed] (LR) -- (UR);         
\draw[arr]         (LR) to[bend left=-30] (E2); 

\draw[arr] (UL) -- (L);                  

\draw[arr] (L) -- (LL);                  
\draw[arr] (L) to[bend left=10] (UR);    
\draw[arr] (L) -- (E1);                  

\draw[arr] (F1) -- (L);                  
\draw[arr] (F1) -- (R);                  

\draw[arr] (R) -- (LR);                  
\draw[arr] (R) -- (E1);                  

\draw[arr] (UR) to[out=-110,in=20,looseness=1.15] (F1); 

\draw[arr] (E1) to[out=-110,in=110,looseness=1.25] (B); 
\draw[arr] (E1) -- (UL);                 
\draw[arr] (E1) -- (E2);                 

\draw[arr] (E2) -- (F1);                 

\end{tikzpicture}}}
\]
Now replace every occurrence of $h$ with $h\epsilon$ in algebras $\mathfrak{D}_{\mathfrak{sl}_3}$, $\mathcal{D}_{\mathfrak{sl}_3}$ and $\mathcal{D}'_{\mathfrak{sl}_3}$ (defined over $\mathbb{C}_{\epsilon}\llbracket h \rrbracket$) as in the previous sections of this paper.
The generators of the related quantum torus algebra are denoted by $\{Y_{1},Y_{2},\dots,Y_{10}\}$. The quantum torus algebra $\mathcal{D}'_{\mathfrak{sl}_{3}}$ is related to $\mathcal{D}_{\mathfrak{sl}_{3}}$ via mutations $\mu_{6} \circ \mu_{7}$. The map $\iota$ together with the above mutations defined in \eqref{eq:clasmut} yields a map \[\widetilde{i} \colon \mathfrak{D}_{\mathfrak{sl}_{3}} \to \mathcal{D}'_{\mathfrak{sl}_{3}}\] defined by
\begin{align*}
  &E_{1} \mapsto i (Y_{8} +Y_{8}Y_{6} + Y_{8} Y_{6} Y_{7} + Y_{8} Y_{6} Y_{7}Y_{9}) +O(\epsilon),  && K_{1} \mapsto  Y_{4} Y_{6} Y_{7} Y_{8} Y_{9} + O(\epsilon), \\
  &E_{2} \mapsto i (Y_{3} + Y_{3}Y_{10} + Y_{3} Y_{10} Y_{6} + Y_{3}Y_{10}Y_{6} Y_{5}) + O(\epsilon), && K_{2} \mapsto Y_{1}Y_{3}Y_{5} Y_{6} Y_{10} + O(\epsilon), \\
  &F_{1} \mapsto i (Y_{4} + Y_{4}Y_{5}) + O(\epsilon), && \widetilde{K}_{1} \mapsto  Y_{4}Y_{5}Y_{8}+ O(\epsilon), \\
  &F_{2} \mapsto i (Y_{1} + Y_{1}Y_{2} + Y_{1} Y_{2} Y_{6} + Y_{1}Y_{2}Y_{6} Y_{7}) + O(\epsilon), && \widetilde{K}_{2} \mapsto  Y_1 Y_{2} Y_{3} Y_{6}Y_{7} + O(\epsilon).
\end{align*}
To obtain the expression above, we have only used classical mutations instead of quantum mutations. This is why we defined the map $\widetilde{i}$ up to zeroth order in $\epsilon$. We conjecture that the quantum mutations lead to similar expressions, with extra factors of $q$.

Consider the algebra homomorphism $\widetilde{\psi} \colon  \mathcal{D}'_{\mathfrak{sl}_{3}}\to A_{3}(\mathbb{C}_{\epsilon}(t_1,t_2))[\mathbf{x}_1^{-1},\mathbf{x}_2^{-1},\mathbf{x}_3^{-1}]$ defined by
  \begin{align*}
    & Y_1 \mapsto (-i)\mathbf{x}_{2}^{-1}\mathbf{x}_{1}q^{\mathbf{x}_{2}\mathbf{p}_{2}}, && Y_2 \mapsto -q^{-2\mathbf{x}_{2} \mathbf{p}_{2}}, \\
    & Y_3 \mapsto (-i) T_{2} \mathbf{x}_{3} q^{- \mathbf{x}_{3}\mathbf{p}_{3}}, && Y_4 \mapsto (-i) \mathbf{x}_{1}^{-1} q^{\mathbf{x}_{1}\mathbf{p}_{1}}, \\
    & Y_5 \mapsto -q^{-2 \mathbf{x}_{1}\mathbf{p}_{1}}, && Y_6 \mapsto -\mathbf{x}_{3}^{-1} \mathbf{x}_{1}^{-1} \mathbf{x}_{2}q^{\mathbf{x}_{1}\mathbf{p}_{1}}q^{\mathbf{x}_{3}\mathbf{p}_{3}}, \\
    & Y_7 \mapsto -q^{-2 \mathbf{x}_{3}\mathbf{p}_{3}}, && Y_8 \mapsto (-i) T_{1}\mathbf{x}_{1}  q^{-\mathbf{x}_{1} \mathbf{p}_{1} - \mathbf{x}_{2}\mathbf{p}_{2}+ \mathbf{x}_{3}\mathbf{p}_{3}}, \\
    & Y_9 \mapsto -T^{-2}_{1}\mathbf{x}_{1}\mathbf{x}_{2}^{-1} \mathbf{x}_{3} q^{2 \mathbf{x}_{2}\mathbf{p}_{2}} q^{- \mathbf{x}_{3}\mathbf{p}_{3}}q^{\mathbf{x}_{1}\mathbf{p}_{1}}, && Y_{10} \mapsto -T_{2}^{-2} q^{2 \mathbf{x}_{3}\mathbf{p}_{3}},
  \end{align*}
  with $T_i = e^{h t_i}$. Based on the map $\widetilde{i} \colon \mathfrak{D}_{\mathfrak{sl}_{3}} \to \mathcal{D}'_{\mathfrak{sl}_{3}}$, we conjecture the following.
\begin{conjecture}
\label{conj:sl3}
There exist integers $(k_i)_{i=1}^{10}$ such that the rescaled map
\[
\widetilde{\psi}^{(k)} \colon \mathcal{D}'_{\mathfrak{sl}_3}\longrightarrow A_3(\mathbb{C}_\epsilon (t_1,t_2) )
\]
defined on generators by
\[
\widetilde{\psi}^{(k)}(Y_i)\ :=\ q^{k_i}\widetilde{\psi}(Y_i),\qquad i=1,\dots,10,
\]
induces an algebra homomorphism
\[
g\ :=\ \widetilde{\psi}^{(k)}\circ \widetilde{i}\ \colon\ \mathfrak{D}_{\mathfrak{sl}_3} \longrightarrow A_3(\mathbb{C}_\epsilon(t_1,t_2)),
\]
whose action on the generators is given by
  \begin{align*}
    K_{1} &\mapsto  T_{1}^{-1} q^{2 \mathbf{x}_{1}\mathbf{p}_{1} + \mathbf{x}_{2}\mathbf{p}_{2} - \mathbf{x}_{3}\mathbf{p}_{3}}, \\
    K_{2} &\mapsto T^{-1}_{2} q^{-\mathbf{x}_{1} \mathbf{p}_{1} + \mathbf{x}_{2}\mathbf{p}_{2} +2 \mathbf{x}_{3} \mathbf{p}_{3}}, \\
    F_{1} &\mapsto \mathbf{x}_{1}^{-1} (q^{\mathbf{x}_{1}\mathbf{p}_{1}} - q^{-\mathbf{x}_{1} \mathbf{p}_{1}}), \\
     F_{2} &\mapsto \mathbf{x}_{2}^{-1}\mathbf{x}_{1} (q^{\mathbf{x}_{2} \mathbf{p}_{2}} - q^{- \mathbf{x}_{2} \mathbf{p}_{2}}) + \mathbf{x}_{3}^{-1} q^{ \mathbf{x}_{1} \mathbf{p}_{1} -\mathbf{x}_{2} \mathbf{p}_{2}}(q^{\mathbf{x}_{3} \mathbf{p}_{3} }- q^{-\mathbf{x}_{3} \mathbf{p}_{3} }), \\
     E_{1} &\mapsto \mathbf{x}_{1}  (T_{1}q^{-\mathbf{x}_{1} \mathbf{p}_{1} - \mathbf{x}_{2}\mathbf{p}_{2}+ \mathbf{x}_{3}\mathbf{p}_{3}} - T^{-1}_{1}q^{\mathbf{x}_{1} \mathbf{p}_{1} + \mathbf{x}_{2}\mathbf{p}_{2}- \mathbf{x}_{3}\mathbf{p}_{3}}),  \\
    &\quad - T_{1} \mathbf{x}_{3}^{-1} \mathbf{x}_{2} q^{-\mathbf{x}_{2} \mathbf{p}_{2}} q^{\mathbf{x}_{3}\mathbf{p}_{3}} (q^{\mathbf{x}_{3}\mathbf{p}_{3}} - q^{-\mathbf{x}_{3}\mathbf{p}_{3}}), \\
     E_{2} &\mapsto  \mathbf{x}_{3}(T_2q^{-\mathbf{x}_{3}\mathbf{p}_{3}} -T_{2}^{-1} q^{\mathbf{x}_{3}\mathbf{p}_{3}}) + \mathbf{x}_{1}^{-1}\mathbf{x}_{2} T_{2}^{-1}q^{2 \mathbf{x}_{3}\mathbf{p}_{3}} (q^{\mathbf{x}_{1}\mathbf{p}_{1}} - q^{-\mathbf{x}_{1}\mathbf{p}_{1}}),
  \end{align*}
with $g(K_{i}) = g(\widetilde{K}_{i})^{-1}$ for $i \in \{1,2\}$ and $T_i = e^{ht_i}$.
  \end{conjecture}
  \begin{remark}
    A related map can be derived by considering the maps $\pi_{\lambda} \colon U_{q}(\mathfrak{sl}_{3}) \to \mathcal{H}_{n}$ defined in  \cite{awata1994heisenberg}.
\end{remark}
Note that the map $\widetilde{i} \colon \mathfrak{D}_{\mathfrak{sl}_{3}} \to \mathcal{D}'_{\mathfrak{sl}_{3}}$ was defined modulo $\epsilon$. A more complete description of the map requires additional factors of $q = e^{h \epsilon}$. These unknown factors of $q$ have been absorbed in the factors $q^{k_{i}}$ in the definition of $g^{(k)}$.

Consider the $U_{q}(\mathfrak{sl}_{3})$ universal R-matrix as given in \eqref{eq:sl3rmatrix}. Using the map $g$ as in Conjecture~\ref{conj:sl3}, we see that the expansion $(g\otimes g)(\mathcal R_{\mathfrak{sl}_3})$ in $\epsilon$ takes on an especially useful form. We employ the same strategy as in Section \ref{sec:quantumclusterper}. Upon setting $T_{1} = e^{h t_{1}}$ and $T_{2} = e^{h t_{2}}$, we find
\begin{gather*}
  \begin{aligned}
    &g((q-q^{-1}) f_{1}) = \mathbf{p}_{1} + O(\epsilon), &\qquad
    &g((q-q^{-1}) f_{2}) = \mathbf{x}_{1}\mathbf{p}_{2} + \mathbf{p}_{3} + O(\epsilon), \\
    &g(e_{1}) = \mathbf{x}_{1}(T_{1} - T_{1}^{-1}) + O(\epsilon), &\qquad
    &g(e_{2}) = \mathbf{x}_{3}(T_{2} - T_{2}^{-1}) + O(\epsilon),
  \end{aligned} \\
  (g \otimes g)(q^{h_{1} \otimes h_{1}}) = e^{h \epsilon^{-1} t_{1}^{2}} e^{-h t_{1}((2\mathbf{x}_{1}\mathbf{p}_{1} + \mathbf{x}_{2}\mathbf{p}_{2} - \mathbf{x}_{3}\mathbf{p}_{3}) \otimes 1 + 1 \otimes (2\mathbf{x}_{1}\mathbf{p}_{1} + \mathbf{x}_{2}\mathbf{p}_{2} - \mathbf{x}_{3}\mathbf{p}_{3}))} + O(\epsilon)
\end{gather*}
where $(g \otimes g)(q^{(A^{-1})_{ij} h_{i} \otimes h_{j}})$ for $i,j \in \{1,2 \}$ can be computed in a way similar to that in the last line.
Lastly, we consider the factor
\[
  (q-q^{-1}) e_{12} \otimes f_{12} = (q-q^{-1})^{-1} e_{12} \otimes (q-q^{-1})^{2}f_{12}
  \]
  that is part of the universal $R$-matrix of $U_{q}(\mathfrak{sl}_{3})$. We deduce
  \begin{align*}
    &(q-q^{-1})^{-1} = \epsilon^{-1} + \dots \\
    &g(e_{12}) = O(\epsilon)\\
    &(q-q^{-1})^{2}g(f_{12}) = (q-q^{-1})^{2} g(f_{2} f_{1} - q f_{1}f_{2}) = -\mathbf{p}_{2} +O(\epsilon).
    \end{align*}
    Thus, we conclude that $g((q-q^{-1}) e_{12} \otimes f_{12})$ contains no $\epsilon^{-1}$ terms.

    Observe that in all of the expressions above, no inverses of Weyl--Heisenberg generators appear inside the $\epsilon$ expansion. Hence, $(g\otimes g)(\mathcal{R}_{\mathfrak{sl}_3})$ can be expanded in orders of $\epsilon$, where one will only encounter (exponentials of) polynomials in $x_{i}$ and $p_{i}$. The expression $(g\otimes g)(\mathcal{R})$ can be brought into a normal ordered form similar to that of Theorem \ref{thm:pertrmatrix} by applying the BCH formula several times. With a rescaling $\epsilon \mapsto \epsilon^2$ and $x_i \mapsto \epsilon x_i$ and $p_i \mapsto \epsilon^{-1} p_i$, we conjecture that this leads to the following.
    \begin{conjecture}
      \label{conj:Rsl3}
Define
\[
R _{0}^{(i,j,k)} = \varphi^{6}_{j,k} \left(
\begin{pmatrix}
                                         S_i & 0 \\
                                         1-S_i^{2} & S
\end{pmatrix} \right) \in A_3(\mathbb{C}) \otimes A_3(\mathbb{C}) \qquad (i \in \{1,2,3\})
\]
where $S_1 = T_1$, $S_2 = T_2$ and $S_3 = T_1 T_2$ and $\varphi^{a}_{b,c}$ defined in Notation \ref{not:tensorthings}. Then 
    \[
      ( g \otimes g)(\mathcal{R}_{\mathfrak{sl}_3}) \sim R_{0}^{(1,1,2)} \cdot R_{0}^{(2,3,4)} \cdot R_{0}^{(3,5,6)}(1 + \epsilon \mathcal{N}(R_1(T_1,T_2,x,p)))
    \]
    for some function $R_1$. With the contraction formalism described in \cite{bar2021perturbed,bosch2025tensorsgaussiansalexanderpolynomial} (or the Mathematica implementation in Appendix~\ref{sec:mathematica}), this then gives rise to a knot invariant, which we denote by $\rho_{\mathfrak{sl}_3}(T_1,T_2)$. A closed formula can be obtained by using \cite[Proposition A.1]{bosch2025tensorsgaussiansalexanderpolynomial}.
\end{conjecture}

It is worth pointing out that the quantum torus algebra $\mathcal{D}'_{\mathfrak{sl}_{3}}$ is the only quantum torus algebra related to $\mathfrak{sl}_3$ that we have found for which an expansion of the $R$-matrix (after applying a homomorphism) can be derived that does not involve inverses of generators of the Weyl--Heisenberg algebra. This is crucial, since if these elements appeared in the expansion of $\epsilon$, any computation with the perturbed $R$-matrix would be orders of magnitude more complex. 
    
\section{Discussion}
\label{sec:discussionthird}
\noindent
The approach to perturbative expansions of knot invariants via cluster algebras has three main outcomes:
\begin{enumerate}[i)]
  \item It provides a method to derive the Alexander polynomial using only classical cluster algebras (Theorem \ref{thm:clusterburau}).
  \item Using the representation theory of quantum cluster algebras, one can construct a perturbative representation, which in turn produces an $\epsilon$-expanded $R$-matrix (Theorem \ref{thm:pertrmatrix}).
  \item It offers a promising route toward a universal strategy for deriving perturbative invariants associated with ADE-type Lie algebras (Section \ref{sec:towsl3}).
\end{enumerate}

Perturbed Alexander invariants still lack a topological interpretation. One advantage of the cluster-theoretic approach described here is that it includes a geometric interpretation. We therefore recall the geometric realization of cluster mutations as flips of ideal triangulations and gluings of ideal hyperbolic tetrahedra.

\subsection{Gluing Tetrahedra}

The Alexander polynomial is intrinsically a topological invariant, defined from the homology of the infinite cyclic cover of the knot complement. Quantum invariants, by contrast, are commonly expressed in a highly algebraic context, with their construction relying on a choice of Lie algebra. While this yields powerful invariants, we prefer to reduce reliance on Lie algebras. Since the Alexander polynomial is intrinsically topological, this suggests that perturbed Alexander invariants also have a topological origin. To connect the cluster-algebraic construction with topology, we review its geometric interpretation based on \cite{Fomin2008,hikami2015braids}.

Let $\mathcal{K}$ be a knot embedded in a 3-manifold $M$. In our setting, we consider $M = S^3$. We study the complement $M \setminus \mathcal{K}$. This manifold admits a decomposition into two pieces along a surface $\Sigma$. In the case $M = S^3$, the surface $\Sigma$ can be taken to be an $n$-punctured disk, where $n$ depends on the decomposition. In Figure \ref{fig:beauty}, $\Sigma$ is a disk with two punctures.

\begin{figure}[H]
\centering
\begin{tikzpicture}
\node[anchor=south west, inner sep=0] (img) at (0,0)
{\includegraphics[scale=0.9]{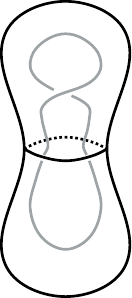}};

\begin{scope}[x={(img.south east)}, y={(img.north west)}]

\node at ($(1,0.5)$) {$\Sigma$};
\node at ($(-0.1,1)$) {$M$};

\end{scope}
\end{tikzpicture}
\caption{A manifold $M$ is being cut into two pieces}
\label{fig:beauty}
\end{figure}

By choosing a finite set of marked points on the boundary of $\Sigma$, we obtain an ideal triangulation of $\Sigma$. An example is shown in Figure~\ref{fig:triangulationintro}. Equivalently, we can obtain this triangulation by gluing two copies of a triangulated once-punctured disk along a boundary. This construction also applies to the $n$-punctured disk.

\begin{figure}[H]
\centering
\begin{tikzpicture}[scale=0.75, every path/.style={line width=1pt}]
\node[circle, draw, fill=black] (0) at (0, 2) {};
\node[circle, draw, fill=black] (1) at (0, -2) {};
\node (4) at (-2, 0) {};
\node (5) at (2, 0) {};
\node[circle, draw, fill=white] (2) at (-1, 0) {};
\node[circle, draw, fill=white] (3) at (1, 0) {};
\draw (0) to (1);
\node (16) at (2,1.5) {$\Sigma$};
\draw (1) to (2);
\draw (2) to (0);
\draw (0) to (3);
\draw (3) to (1);
\draw [in=180, out=-90] (4.center) to (1.center);
\draw [in=-180, out=90] (4.center) to (0.center);
\draw [in=90, out=0] (0.center) to (5.center);
\draw [in=0, out=-90] (5.center) to (1.center);
\end{tikzpicture}
\caption{A triangulation of a 2-punctured disk}
\label{fig:triangulationintro}
\end{figure}

A half-Dehn twist on this surface can be understood via its action on ideal triangulations, through a finite sequence of ``flips'', as explained in Section \ref{sec:braidingoperator}. The two-dimensional flip admits a three-dimensional realization. As explained in \cite{Fomin2008,hikami2015braids}, the flip can be seen as the attachment of an ideal hyperbolic tetrahedron along the quadrilateral. Hyperbolic geometry now gives rise to algebra, as we describe below. An ideal tetrahedron $\Delta\subset \mathds H^3$ with vertices $(v_0,v_1,v_2,v_3)\subset \partial\mathds H^3\simeq \widehat{\mathds C}$ is defined (up to orientation-preserving isometry) by its shape parameter $z\in\mathds{C} \setminus \{0,1 \}$, given by the cross-ratio
\[
z=[v_0,v_1;v_2,v_3]=\frac{(v_0-v_2)(v_1-v_3)}{(v_0-v_3)(v_1-v_2)}.
\]
The same tetrahedron also carries two other shape parameters $z’=1-z^{-1}$ and $z’'=(1-z)^{-1}$, corresponding to the three choices of opposite edge-pairs. See Figure~\ref{fig:tetrahedron}.
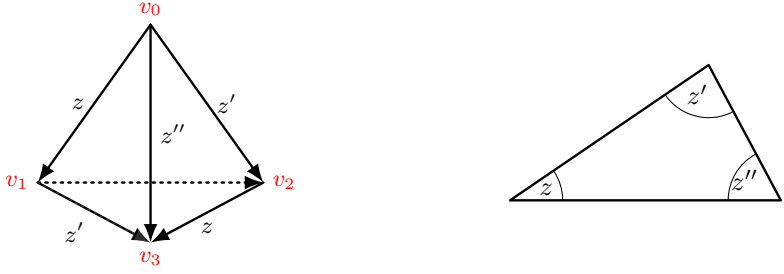
\begin{figure}[H]
\centering
\begin{minipage}{0.48\textwidth}
  \centering
\begin{tikzpicture}[scale=0.4, line cap=round, line join=round]
  \coordinate (v0) at (0,4.2);
  \coordinate (v1) at (-3,0);
  \coordinate (v2) at (3,0);
  \coordinate (v3) at (0,-1.6);

  \tikzset{
    edge/.style={line width=0.9pt},
    dottededge/.style={edge, dotted},
    dashededge/.style={edge, dashed},
    arr/.style={-Latex},
    arrsmall/.style={-Latex, line width=0.9pt},
    vlabel/.style={red, font=\small},
    elabel/.style={font=\small},
  }

  \draw[edge, arr] (v0) -- (v1)
    node[elabel, midway, left] {$z$};

  \draw[edge, arr] (v0) -- (v2)
    node[elabel, midway, right] {$z'$};

  \draw[edge, arr] (v1) -- (v3)
    node[elabel, midway, below left] {$z'$};

  \draw[dottededge,arr] (v1) -- (v2);

  \draw[edge, arr] (v2) -- (v3)
    node[elabel, midway, below] {$z$};

  \draw[edge, arr] (v0) -- (v3)
    node[elabel, midway, right] {$z''$};


  \draw[dotted, arr, gray, opacity=0.35, line width=0.7pt] (v1) -- (v3);

  \node[vlabel, above]       at (v0) {$v_0$};
  \node[vlabel, left]        at (v1) {$v_1$};
  \node[vlabel, right]       at (v2) {$v_2$};
  \node[vlabel, below]       at (v3) {$v_3$};

  \end{tikzpicture}
\end{minipage}\hfill
\begin{minipage}{0.48\textwidth}
  \centering
\begin{tikzpicture}[scale=0.5, line cap=round, line join=round]
  \coordinate (A) at (0,0);     
  \coordinate (B) at (6,0);     
  \coordinate (C) at (4.4,3.0); 

  \draw[line width=0.9pt] (A) -- (B) -- (C) -- cycle;

  \usetikzlibrary{angles,quotes}

  \pic[
    draw,
    angle radius=7mm,
    angle eccentricity=1.2,
  ] {angle = B--A--C};
  \node at ($(A)+(0.80,0.25)$) {$z$};

  \pic[
    draw,
    angle radius=7mm,
    angle eccentricity=1.2,
  ] {angle = A--C--B};
  \node at ($(C)+(-0.25,-0.65)$) {$z'$};

  \pic[
    draw,
    angle radius=7mm,
    angle eccentricity=1.2,
  ] {angle = C--B--A};
  \node at ($(B)+(-0.8,0.45)$) {$z''$};

\end{tikzpicture}
\end{minipage}
\caption{An oriented ideal tetrahedron (left) and the triangle obtained by intersecting it with a horosphere (right). Adapted from \cite{hikami2015braids}.}
\label{fig:tetrahedron}
\end{figure}
The flip is interpreted as gluing a hyperbolic ideal tetrahedron to a triangulation of a punctured disk, as in Figure \ref{fig:gluing}. Each dihedral angle on the triangulated surface after the gluing is

\[
\begin{cases}
\tilde z_1 = z_1 z', \\
\tilde z_2 = z_2 z'', \\
\tilde z_3 = z, \\
\tilde z_4 = z_4 z'', \\
\tilde z_5 = z_5 z'.
\end{cases}
\]
with consistency condition $z_3 z = 1$. When one sets
\[
z_k = -y_k^{-1},
\]
one obtains
\[
\begin{cases}
\tilde{y}_1 = y_1 (1+y_3), \\
\tilde{y}_2 = y_2(1+y_3^{-1})^{-1}, \\
\tilde{y}_3 = y_3^{-1}, \\
\tilde{y}_4 = y_4(1+y_3^{-1})^{-1}, \\
\tilde{y}_5 = y_5 (1+y_3),
\end{cases}
\]
which are related to the mutation of cluster $\mathcal{X}$-variables as in~\eqref{eq:clasmut}. Consequently, we may interpret a mutation as the operation of gluing an ideal tetrahedron
(with shape parameter $z = -y_3^{-1}$) to the punctured surface.
\begin{figure}[H]
\centering
\begin{tikzpicture}
\node[anchor=south west, inner sep=0] (img) at (0,0)
{\includegraphics[width=0.6\linewidth]{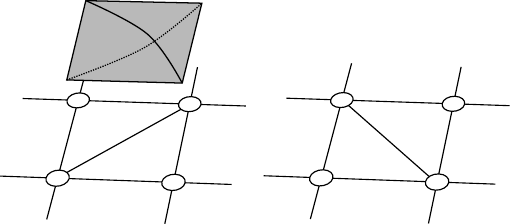}};

\begin{scope}[x={(img.south east)}, y={(img.north west)}]

\node at ($(0.1,0.4)$) {$z_1$};
\node at ($(0.23,0.15)$) {$z_4$};
\node at ($(0.39,0.38)$) {$z_5$};
\node at ($(0.25,0.5)$) {$z_2$};
\node at ($(0.20,0.38)$) {$z_3$};
\node at ($(0.12,0.83)$) {$z'$};
\node at ($(0.26,0.83)$) {$z$};
\node at ($(0.28,1.05)$) {$z''$};

\node at ($(0.1+0.52,0.4)$) {$\tilde{z}_1$};
\node at ($(0.23+0.52,0.15)$) {$\tilde{z}_4$};
\node at ($(0.39+0.52,0.38)$) {$\tilde{z}_5$};
\node at ($(0.25+0.52,0.5)$) {$\tilde{z}_2$};
\node at ($(0.20+0.52,0.38)$) {$z$};

\end{scope}
\end{tikzpicture}
\caption{Interpretation of the flip. Gluing an ideal tetrahedron onto a triangulated surface (left) produces a triangulated surface with the diagonal flipped (right). Adapted from \cite{hikami2014braiding}.}
\label{fig:gluing}
\end{figure}

From a three-dimensional perspective, each flip can be interpreted as the addition of an ideal tetrahedron. A sequence of flips therefore corresponds to a gluing of ideal tetrahedra. In this way, the braid word determines an ideal triangulation of the knot complement $S^3 \setminus \mathcal K$, where the tetrahedra are glued in accordance with its corresponding mutation sequence.

\subsection{The Neumann--Zagier Matrices}

In Section~\ref{sec:alexcluster}, we showed that for a given braid $\beta$, imposing the condition $\boldsymbol{x}[1]=\boldsymbol{x}[m+1]$ on the associated cluster pattern yields, for particular evaluations of the cluster variables, a knot invariant of the closure of $\beta$. Hence, if one seeks to use the theory of cluster algebras to derive knot invariants, then this leads one to consider fixed points of cluster patterns. A related structure is the Neumann--Zagier matrices.  In \cite{Mizuno2020Jacobian}, the authors show that the Neumann--Zagier matrices describe the fixed-point equations of such cluster transformations. In more recent work, these same Neumann--Zagier matrices were associated with ordered ideal triangulations of a knot complement, allowing a derivation of the Alexander polynomial \cite{GaroufalidisYoon2025TwistedAlexander}. The authors of the latter paper relied on a relationship between Neumann--Zagier matrices and Fox calculus. For future research, this correspondence could potentially be extended to higher order in $\epsilon$ to obtain a topological interpretation of the perturbed Alexander invariants.

We emphasize that the current construction of the perturbed Alexander invariants still strongly relies on the $R$-matrix formalism. As a future direction, it might be interesting to reduce this dependence by expressing the perturbed invariants purely in terms of triangulations. This viewpoint is precisely what makes a cluster-algebraic interpretation promising. Moreover, it may prove fruitful in the study of fibered knots, as knots of this type naturally arise with a surface on which mapping-class-group actions can be performed.

\subsection{Generalizations and the Weyl-Group Action}

In \cite[Section 13.3]{goncharov2019quantum}, Goncharov and Shen showed that the cluster varieties under consideration come equipped with a Weyl-group action, which they interpret within the context of (per)mutations. In our setting, this suggests that the palindromicity of the Alexander polynomial and the perturbed Alexander invariants may reflect a symmetry under the Weyl group $\mathbb{Z}/2\mathbb{Z}$. This observation has a geometric interpretation in terms of flips in the triangulation of the knot complement. This idea may extend to the (perturbed) invariants associated with $\mathfrak{sl}_3$, where symmetries could arise from the Weyl-group action of $S_3$, which likewise admits a geometric description.

This latter observation connects well with recent work \cite{barnatan2026faststrongtopologicallymeaningful}, in which a knot invariant depending on two variables $T_1$ and $T_2$ was introduced and conjectured to be equivalent to the perturbed Alexander invariant arising from $U_q(\mathfrak{sl}_3)$. This new knot invariant, $\theta_{\mathcal{K}}(T_1,T_2)$, can be represented as a 2D array that clearly shows its symmetries. Notably, $\theta_{\mathcal{K}}(T_1,T_2)$ admits a hexagonal symmetry; that is, for any knot $\mathcal{K}$, $\theta_{\mathcal{K}}(T_1,T_2)$ is invariant under the substitutions $(T_1 \to T_1, T_2 \to T_1^{-1}T_2^{-1})$ and $(T_1 \to T_1 T_2, T_2 \mapsto T_2^{-1})$. We conjecture that $\theta_{\mathcal{K}}(T_1,T_2)$ is equivalent to the knot invariant $\rho_{\mathfrak{sl}_3}(T_1,T_2)$ introduced in Conjecture~\ref{conj:Rsl3}.

  Since $\rho_{\mathfrak{sl}_3}(T_1,T_2)$ has an origin in the representation theory of quantum groups and cluster algebras, the hexagonal symmetry of $\theta_{\mathcal{K}}(T_1,T_2)$ can be explained as a symmetry under a Weyl group action. Let us go into more detail. Note that $\rho_{\mathfrak{sl}_3}(T_1,T_2)$ is defined through the map $g$ which has an origin in representation theory. We denote by $\mathcal{P}_{(t_1,t_2)}$ the representation that induces the map $g$. By~\cite[Section 13.3]{goncharov2019quantum}, this representation ace has the property that
  \begin{equation}
  \label{eq:equivalence}
    \mathcal{P}_{(t_1,t_2)} \simeq \mathcal{P}_{w(t_1,t_2)}
  \end{equation}
  are unitarily equivalent for any Weyl group element $w \in W$ acting on $t$. This observation is consistent with the fact that $\mathcal{P}_{(t_1,t_2)}$ is equivalent to the positive principal series representation as alluded to in Remark~\ref{rem:positiverep}, for which the same property holds as shown in \cite[Theorem 9.1]{Ip2020PositiveRepresentations}.
Since the universal invariant of a long knot associated with \(U_q(\mathfrak{g})\) lies in the center of the algebra, any equivalence of the form \eqref{eq:equivalence} manifests itself when the invariant is evaluated in a representation $\mathcal{P}_{(t_1,t_2)}$

  This observation is not isolated to the case $\mathfrak{g} = \mathfrak{sl}_3$. This leads to the following conjecture.

\begin{conjecture}
  Let $\mathfrak{g}$ be a Lie algebra of type ADE with rank $r$. The cluster-algebraic intepretation of quantum groups gives rise to a representation $\mathcal{P}_{t}$ of $U_q(\mathfrak{g})$ with parameter $t = (t_i)_{i = 1}^r$, which, after mapping $h\mapsto \epsilon^2 h$ and $t_i \mapsto \epsilon^{-2}t_i$, induces a map $g_{\mathfrak{g}} \colon U_q(\mathfrak{g}) \to A_n(\mathbb{C}_{\epsilon}(t))$ for some $n \in \mathbb{N}$, such that (after mapping $x_i \mapsto \epsilon x_i$ and $p_i \mapsto \epsilon^{-1} p_i$) the universal $R$-matrix $\mathcal{R}_{\mathfrak{g}}$ can be expressed as a perturbed Gaussian, that is
\[
(g_{\mathfrak{g}}  \otimes g_{\mathfrak{g}} )(\mathcal{R}_{\mathfrak{g}}) \simeq R_0(1 + \epsilon \mathcal{N}(R_1(T_1,\dots,T_r,x,p)) + \dots) \qquad (T_i = e^{h t_i})
\]
where $R_0 = \varphi(\dots)$ is a Gaussian of the type introduced in Notation~\ref{def:monhom}. For any order of $\epsilon$, this induces a knot invariant which has symmetries in its input variables $(T_i)_{i = 1}^r$ that arise from the equivalence
  \begin{equation}
  \label{eq:equivalence}
    \mathcal{P}_{t} \simeq \mathcal{P}_{w(t)}
  \end{equation}
for any Weyl group element $w \in W$ acting on $t$.
\end{conjecture}
In future work, we will explore this conjecture in more detail.

\appendix
\section{Mathematica implementation}
\label{sec:mathematica}
\noindent
To implement noncommutativity in \textit{Mathematica}, which is essential for the definition of quantum mutations, we make use of the \textit{NCAlgebra} package \cite{HeltonDeOliveira_NCAlgebra}. There, variables can be explicitly declared as noncommuting, and products between such variables are written using the \texttt{**} operator, so that, for instance, \texttt{a ** b} remains distinct from \texttt{b ** a}.  Moreover, the inverse of a non-commutative variable is denoted using \texttt{inv[x]}.

To compute the perturbed R-matrix, we need the following. Recall that $\mathcal{R}$ denotes the universal $R$-matrix of $U_{h}(\mathfrak{sl}_{2})$. Moreover, we defined $R = (\psi \circ \iota \otimes \psi \circ \iota)(\mathcal{R})$ and $R_{0} = R |_{\epsilon = 0}$. Let us assume that
\[
R = R_{0}e^{\epsilon \mathcal{N}(f_{1}(x,p) + f_{2}(x,p) \epsilon)} \mod \epsilon^{3}.
\]
The goal will be to find explicitly the functions $f_{1}(x,p)$ and $f_{2}(x,p)$. With the use of quantum mutations, it is possible to compute
\begin{align*}
F_{1} :=   P \circ \mathrm{Ad}_{R} x_{1} \quad \text{ and } \quad F_{2} := P \circ \mathrm{Ad}_{R} x_{2}.
\end{align*}
Using Proposition \ref{pro:perturbedexptill2}, we can explicitly compute the functions $f_{1}(x, p)$ and $f_{2}(x, p)$. In the following, we detail the procedure used to compute $F_{1}$ and $F_2$.

The expression \texttt{F}$_{1}$ computes
\[
  \mathbf{x}_{1} + \epsilon \mathcal{N}(\partial_{p_{1}} f_{1}(x,p)) + \epsilon^{2} \mathcal{N}(f_{2}(x,p)) + \frac{\epsilon^{2}}{2}[\mathcal{N}(f_{1}(x,p),\partial_{p_{1}} f_{1}(x,p)]
  \]
  while the expression \texttt{F}$_{2}$ computes
\[
  \mathbf{x}_{2} + \epsilon \mathcal{N}(\partial_{p_{2}} f_{1}(x,p)) + \epsilon^{2} \mathcal{N}(f_{2}(x,p)) + \frac{\epsilon^{2}}{2}[\mathcal{N}(f_{1}(x,p),\partial_{p_{2}} f_{1}(x,p)],
  \]
from which the functions $f_{1}(x,p)$ and $f_{2}(x,p)$ can be extracted, as in the Mathematica code below. The two expressions $f_{1}$ and $f_{2}$ are then brought together in the expressions \texttt{Pert$_{i,j}$} and \texttt{InvPert$_{i,j}$} which make up the perturbative parts of $R$ and $R^{-1}$. To confirm the XC-algebra relations, we use the contraction formalism and its Mathematica implementation, as provided in \cite{bar2021perturbed}.

The following implementation is publicly available at \cite{bosch2026perturbative}.
\includepdf[pages={1-9}, scale=0.85]{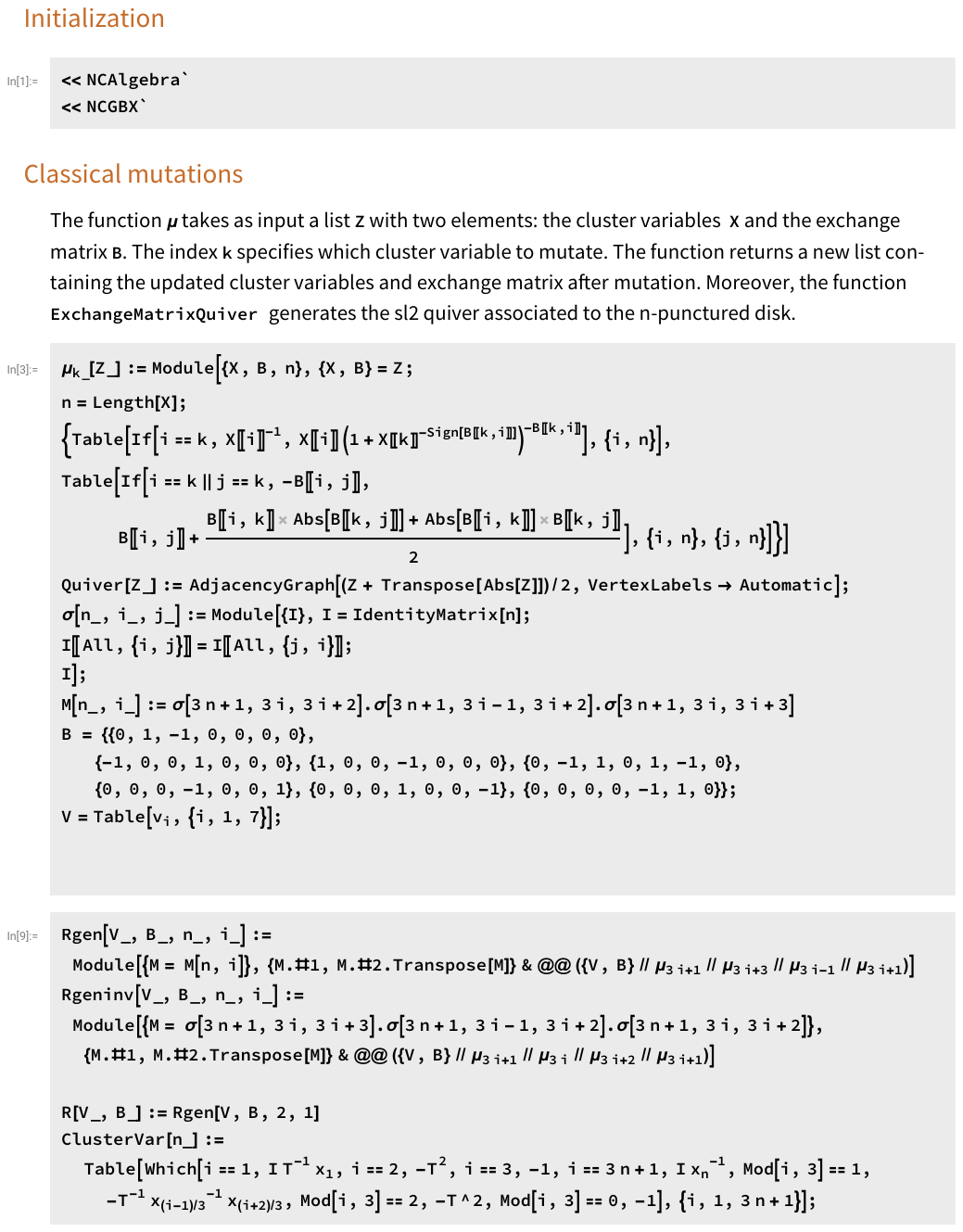}

\bibliographystyle{halpha-abbrv}
\bibliography{bibliografia}
\typeout{get arXiv to do 4 passes: Label(s) may have changed. Rerun}
\end{document}